\numberwithin{equation}{section}
\newtheorem{thm}{}[section]
\newtheorem{theorem}[thm]{Theorem}
\newtheorem{corollary}[thm]{Corollary}
\newtheorem{lemma}[thm]{Lemma}
\newtheorem{proposition}[thm]{Proposition}
\newtheorem{example}[thm]{Example}
\theoremstyle{definition}
\newtheorem{definition}[thm]{Definition}
\newcommand{\Id}{\ensuremath{\mathrm{Id}}}
\newcommand{\LL}{\ensuremath{\bm{L}}}
\newcommand{\YY}{\ensuremath{\mathbb{Y}}}
\newcommand{\XX}{\ensuremath{\mathbb{X}}}
\newcommand{\XB}{\ensuremath{\mathcal{X}}}
\newcommand{\YB}{\ensuremath{\mathcal{Y}}}
\newcommand{\Lt}{\ensuremath{\mathcal{L}}}
\newcommand{\ee}{\ensuremath{\bm {e}}}
\newcommand{\Ft}{\ensuremath{\mathcal{ F}}}
\newcommand{\Pt}{\ensuremath{\mathcal{P}}}
\newcommand{\St}{\ensuremath{\mathcal{S}}}
\newcommand{\Sym}{\ensuremath{\mathbb{S}}}
\newcommand{\bpsi}{\ensuremath{\bm{\psi}}}
\newcommand{\vPhi}{\ensuremath{\bm{\Phi}}}
\newcommand{\vPsi}{\ensuremath{\bm{\Psi}}}
\newcommand{\Fb}{\ensuremath{\bm{F}}}
\newcommand{\Gb}{\ensuremath{\bm{G}}}
\newcommand{\Dt}{\ensuremath{\mathcal{D}}}
\newcommand{\Kt}{\ensuremath{\mathcal{K}}}
\newcommand{\ct}{\ensuremath{\bm{C}}}
\newcommand{\kt}{\ensuremath{\bm{k}}}
\newcommand{\rt}{\ensuremath{\bm{r}}}
\newcommand{\Mt}{\ensuremath{\mathcal{M}}}
\newcommand{\Nt}{\ensuremath{\mathcal{N}}}
\newcommand{\Ot}{\ensuremath{\mathcal{O}}}
\newcommand{\Ct}{\ensuremath{\mathcal{C}}}
\newcommand{\pp}{\ensuremath{\bm{P}}}
\newcommand{\pq}{\ensuremath{\bm{Q}}}
\DeclareMathOperator{\rang}{rang}
\DeclareMathOperator{\supp}{supp}
\newcommand{\RR}{\ensuremath{\mathbb{R}}}
\newcommand{\FF}{\ensuremath{\mathbb{F}}}
\newcommand{\NN}{\ensuremath{\mathbb{N}}}
\newcommand{\abs}[1]{\left\lvert#1\right\rvert}
\newcommand{\norm}[1]{\left\lVert#1\right\rVert}
\newcommand{\enbrace}[1]{\left\lbrace#1\right\rbrace}
\newcommand{\enpar}[1]{\left(#1\right)}
\author[J. L. Ansorena]{Jos\'e L. Ansorena}\address{Department of Mathematics and Computer Sciences\\
Universidad de La Rioja\\
Logro\~no 26004\\ Spain}
\email{joseluis.ansorena@unirioja.es}
\author[G. Bello]{Glenier Bello}
\address{Departamento de Matem\'{a}ticas e
Instituto Universitario de Matem\'{a}ticas y Aplicaciones\\
Universidad de Zaragoza\\
50009 Zaragoza\\
Spain}
\email{gbello@unizar.es}
\subjclass[2020]{46B20; 46B15, 46B25, 46B42, 46A45}
\keywords{Variable exponent Lebesgue spaces, basic sequences, Orlicz sequence spaces, complemented spaces.}
\begin{document}
\title[Basic sequences in variable exponent Lebesgue spaces]{On the basic sequence structure of variable exponent Lebesgue spaces}
\begin{abstract}
We study the subsymmetric basic sequence structure of variable exponent Lebesgue spaces $L_{\pp}$ built from index functions $\pp\colon\Omega\to(0,\infty]$ on $\sigma$-finite measure spaces $(\Omega,\Sigma,\mu)$. Specifically, we prove that if $\pp$ is bounded away from infinity, then any complemented subsymmetric basic sequence of $L_{\pp}$ is equivalent to the canonical basis of $\ell_r$ for some $r\ge 1$ in the essential range of $\pp$.
\end{abstract}
\thanks{Both authors acknowledge the support of the Spanish Ministry for Science and Innovation under Grant PID2022-138342NB-I00 for \emph{Functional Analysis Techniques in Approximation Theory and Applications (TAFPAA)}. G. Bello has also been partially supported by PID2022-137294NB-I00, DGI-FEDER and by Project E48\_23R, D.G. Arag\'{o}n}
\maketitle
\section{Introduction}\noindent
The theory of variable exponent Lebesgue spaces has matured from a topic of abstract interest into an indispensable tool in partial differential equations, fluid dynamics, and harmonic analysis. Unlike their classical counterparts, these spaces allow the exponent to vary with the spatial variable, providing a natural and powerful framework for modeling phenomena with localized behavior, such as electrorheological fluids or materials with inhomogeneous properties. The rich structure of these spaces is deeply intertwined with the properties of the exponent function. In this article, we regard these spaces from a functional analysis perspective.

Within the study of the geometry of Banach spaces, the structure of basic sequences is a fundamental topic already present in Banach's book \cite{Banach1932}. In these early years, specialists were confident of achieving structural results valid for all Banach spaces. In particular, they sought to prove that every Banach space contains a copy of $c_0$ or $\ell_p$ for some $1\le p<\infty$. Since the canonical bases of these spaces are symmetric, the study naturally evolved to determining the structure of symmetric basic sequences of Banach spaces. Any symmetric basis is subsymmetric, that is, unconditional and equivalent to all its subsequences. As a matter of fact, the only feature of symmetric sequences that one needs in many situations is their subsymmetry. Partly because of this, and partly because spreading models of null sequences are subsymmetric, the researchers went on to seek subsymmetric basic sequences.

The study of the basic sequence structure of Banach spaces is part of the study of their subspace structure or, following Banach \cite{Banach1932}, that of the `linear dimension' of Banach spaces. We point out that the mere existence of copy of a Banach space $\XX$ within another Banach space $\YY$ does not entitle us to claim that $\XX$ is in a block from which $\YY$ is constructed. To fairly assert this, we must assume that $\XX$ can be placed into $\YY$ in a complementary manner, so that $\YY$ is isomorphic to the direct sum of $\XX$ and a third Banach space. So, the study of the complemented basic sequence structure of Banach spaces is of special interest.

We refer the reader to \cite{AnsorenaBello2025b} for a summary of advances in understanding the structure of subsymmetric basic sequences of Banach spaces. Regarding Lebesgue spaces $L_p$, the first milestone within this topic is \cite{Paley1936}, where Paley solved some problems posed in \cite{Banach1932}. Namely, he proved that given $p$, $q\in[1,\infty)$, $\ell_q$ does not embed in $L_p$ unless $q=2$ or $p\le q\le 2$. The fact that $\ell_2$ embeds into $L_p$ is a ready consequence of Khintchine inequality \cite{Kinchine1923}. Kadets \cite{Kadec1958} completed the study by showing that if $p< q< 2$ then $\ell_q$ do embeds into $L_p$.

By duality, $\ell_q$ is not a complemented subspace of $L_p$ unless $p=1$ or $q\in\{2, p\}$. Clearly, $\ell_p$ is a complemented subspace of $L_p$ for any $1\le p<\infty$. Pe{\l}czy\'{n}ski \cite{Pel1960} proved that $\ell_2$ is a complemented subspace of $L_p$ for any $1<p<\infty$. Lindenstrauss and Pe{\l}czy\'{n}ski \cite{LinPel1968} completed the picture by proving that any unconditional basic sequence complemented in $L_1$ is equivalent to the $\ell_1$-basis.

Kadets and Pe{\l}czy\'{n}ski \cite{KadPel1962} carried out a systematic study of basic sequences in Lebesgue spaces. Among other results, they proved that, if $2<p<\infty$, the canonical bases of $\ell_2$ and $\ell_p$ are the only subsymmetric basic sequences of $L_p$. To the best of our knowledge, it is unknown whether $L_p$, $1\le p < 2$, contains symmetric or subsymmetric bases other than the $\ell_q$-basis, $p\le q \le 2$.

The paper that initiated the study of variable exponent Lebesgue spaces by basic sequence techniques was \cite{HR2012} (see also \cite{FHRS2020}). In it, the authors characterized in terms of the essential range of the variable exponent $\pp$ the indices $q\in[1,\infty)$ such that $\ell_q$ isomorphically embeds into a variable exponent Lebesgue $L_{\pp}$ over the unit interval. Hern\'andez and Ruiz also proved that $\ell_q$ complementably embeds into $L_{\pp}$ provided that $q$ belongs to the essential range of $\pp$.

In this paper, we complement the study carried out in \cite{HR2012} by characterizing the indices $q\in[1,\infty)$ such that $\ell_q$ complementably embeds into $L_{\pp}$. Besides, we widen the scope of the study by considering not only $\ell_q$-spaces but also subsymmetric basic sequences. Specifically, we characterize both the subsymmetric basic sequence structure and the complemented subsymmetric basic sequence structure of Lebesgue sapces with variable exponent essentially bounded. Oddly enough, we find out that, while symmetric basic sequences other than $\ell_q$-bases are contained in these spaces, they fail to be complemented unless they are $\ell_q$-bases. Unlike in \cites{HR2012,FHRS2020}, we include both nonatomic spaces and atomic ones in our study. The latter are also known as Bourgin--Nakano spaces (see \cites{Bourgin1943,Nakano1950}).

To conclude this introductory section, we outline the structure of the paper. In Section~\ref{sec:MO} we establish the notation about variable Lebesgue spaces that will be used through the paper. We see these spaces from the perspective of Musielak-Orlicz spaces, including locally nonconvex spaces. Without intending to tackle an exhaustive study of these spaces, we sketch the more relevant ideas to our purposes. We revisit some well-known results from our perspective, and also prove new ones. In Section~\ref{sec:Orlicz.functions}, we study certain Orlicz functions (those that arise as convex combinations of power functions) that naturally appear when studying the geometry of variable Lebesgue spaces. The study of basic sequences in these spaces is addressed in sections~\ref{sec:subsymmetric} and \ref{sec:complemented}, which contain the main results of the paper. While in Section~\ref{sec:complemented} we study complemented basic sequences, in Section~\ref{sec:subsymmetric} we give the results achieved without imposing complementability.
\section{Musielak-Orlicz function spaces}\label{sec:MO}\noindent
Given a measure space $(\Omega,\Sigma,\mu)$, we denote by $\Sigma(\mu)$ the set of all measurable sets with finite measure. Let $L(\mu)$ be the vector space of all measurable functions with values in the real or complex field $\FF$. As it is customary, we identify functions and sets that differ on a null set. We say that $\mu$ is separable if the metric space $(\Sigma(\mu),d_\mu)$ is separable, where $d_\mu$ is the distance given by
\[
d_\mu(A,B)=\mu(A \triangle B), \quad A,\, B\in\Sigma(\mu).
\]
We endow $L(\mu)$ with the topology associated with the convergence in measure on finite-measure sets, so $L(\mu)$ becomes a complete topological vector space. We denote by $\St(\mu)$ the vector space all simple integrable functions, i.e., the linear span of
\[
\enbrace{\chi_E \colon E\in\Sigma(\mu)}.
\]
A function space over $(\Omega,\Sigma,\mu)$ will be a quasi-Banach lattice $(\XX,\norm{\cdot}_\XX)$ such that
\begin{itemize}[leftmargin=*]
\item $\XX\subseteq L(\mu)$ continuously,
\item $\St(\mu)\subseteq \XX$, and
\item there is a constant $\ct\in[0,\infty)$ such that if a sequence $(f_n)_{n=1}^\infty$ in $\XX$ converges to $f$ $\mu$-a.e.\@ and $S:=\limsup_n \norm{f_n}_\XX<\infty$, then $f\in\XX$ and $\norm{f}_\XX\le \ct S$.
\end{itemize}
Let $L^+(\mu)$ be the cone of all measurable functions with values in $[0,\infty]$. Following \cite{AnsorenaBello2022}, a function quasi-norm will be a homogeneous map
\[
\rho\colon L^+(\mu) \to [0,\infty]
\]
such that
\begin{enumerate}[label=(F.\arabic*),widest=10,series=fqn,leftmargin=*]
\item\label{FQN:M} If $f\le g$ $\mu$-a.e., then $\rho(f)\le \rho(g)$,
\item\label{FQN:SubAdd} There is $\kt\in[1,\infty)$, such that, for all $f$, $g\in L^+(\mu)$,
\[
\rho(f+g)\le\kt \enpar{ \rho( f)+\rho( g)}.
\]
\item\label{FQN:Simple} $\rho( \chi_E)<\infty$ for all $E\in\Sigma(\mu)$,
\item\label{FQN:CM} for every $E\in\Sigma(\mu)$ and $\varepsilon>0$ there is $\delta>0$ such that $\mu(A)\le \varepsilon$ whenever $A\in\Sigma$ satisfies $A\subseteq E$ and $\rho(\chi_A)\le \delta$.
\item\label{FQN:RFatou} There is $\ct \in [1,\infty)$ such that $\rho(\lim_n f_n)\le \ct \lim_n \rho(f_n)$ for all non-decreasing sequences $(f_n)_{n=1}^\infty$ in $L^+(\mu)$.
\end{enumerate}
Let $0<r\le 1$. If the function quasi-norm $\rho$ satisfies
\begin{enumerate}[label=(F.\arabic*),widest=10,series=fqn,leftmargin=*,resume]
\item\label{FQN:SubPAdd} $\rho^r (f+g)\le \rho^r( f)+\rho^r( g)$ for all $f$, $g\in L^+(\mu)$
\end{enumerate}
we call it a function $r$-nom (function norm if $r=1$). Note that \ref{FQN:SubPAdd} implies \ref{FQN:SubAdd} with $\kt=2^{1/r-1}$.

Given a function quasi-norm $\rho$ the set
\[
\LL_{\rho}:=\enbrace{f\in L(\mu) \colon \norm{f}_\rho := \rho\enpar{\abs{f}}<\infty}
\]
endowed with the quasi-norm $\norm{\cdot}_\rho$ is a function space. The other way around, if $(\XX, \norm{\cdot}_\XX)$ is a function space then $\XX=\LL_{\rho}$, where $\rho$ is defined by $\rho(f)=\norm{f}_\XX$ if $f\in \XX$ and $\rho(f)=\infty$ otherwise.

Given $0<r<\infty$, a function quasi-norm $\rho$ and a family $\Ft=(f_j)_{j\in J}$ in $L^+(\mu)$ we set
\[
N_{\rho}(\Ft;r)=\rho\enpar{\enpar{\sum_{j\in J} f_j^r}^{1/r}}, \quad M_{\rho}(\Ft;r)=\enpar{\sum_{j\in J} \rho^r(f_j)}^{1/r}.
\]
We say that $\XX=\LL_\rho$ is lattice $r$-convex (resp., $r$-concave) if there is a constant $C$ such that $N_{\rho}(\Ft;r)\le C M_{\rho}(\Ft;r)$ (resp., $M_{\rho}(\Ft;r) \le C N_{\rho}(\Ft;r)$) for all families $\Ft$ in $L^+(\mu)$. If $\XX$ is $r$-convex (resp., $r$-concave) for some $r\in(0,\infty)$ we say that $\XX$ is $L$-convex (resp., $L$-concave). We say that $\XX$ is absolutely continuous if
\begin{enumerate}[label=(F.\arabic*),widest=6,series=fqn,leftmargin=*,resume]
\item\label{FQN:Abs} $\lim_n \rho(f_n)=0$ for all non-increasing sequences $(f_n)_{n=1}^\infty$ in $L^+(\mu)$ with $\rho(f_1)<\infty$ and $\lim_n f_n=0$.
\end{enumerate}
Let $\XX_0$ be the closure of $\St(\mu)$ in $\XX$. We say that $\XX$ is minimal if $\XX_0=\XX$. If $\XX$ is $L$-concave, then $\XX$ is absolutely continuous. In turn, if $\XX$ is absolutely continuous, then $\XX$ is minimal (see, e.g., \cite{AnsorenaBello2025b}).

Give a function quasi-norm $\rho$ over $(\Omega,\Sigma,\mu)$ we consider the gauge $\rho^*$ given by
\[
\rho^*(g)=\sup\enbrace{ \int_\Omega fg \, d\mu \colon f\in L^+(\mu), \, \rho(f)\le 1}, \quad g\in L^+(\mu).
\]
If
\begin{enumerate}[label=(F.\arabic*),widest=10,series=fqn,leftmargin=*,resume]
\item\label{FQN:L1loc}
for every $E\in\Sigma(\mu)$ there is $C_E\in(0,\infty)$ such that
\[
\int_E f\, d\mu\le C_E \rho(f), \quad f\in L^+(\mu),
\]
\end{enumerate}
then $\rho^*$ is a function norm. Note that \ref{FQN:L1loc} implies \ref{FQN:CM} and that $\rho^*$ always satisfies \ref{FQN:L1loc} as well. We say that $\rho^*$ is the dual function norm of $\rho$. The function space built from $\rho^*$ is called the conjugate space of $\XX=\LL_\rho$ and denoted by $\XX'$. We have $\XX''=\XX$ (see \cite{BennettSharpley1988}). We can regard $\XX'$ as a closed subspace of the dual space $\XX^*$. If $\XX$ is absolutely continuous, then $\XX'=\XX^*$ (see \cite{BennettSharpley1988}).

For $j=1$, $2$, let $\XX_j$ be a function space over a $\sigma$-finite measure space $(\Omega_j,\Sigma_j,\mu_j)$. Let
\[
K\colon \Omega_1\times \Omega_2 \to [0,\infty)
\]
be a measurable function. Suppose there is a bounded linear operator $T\colon \XX_1 \to \XX_2$ given by
\[
T(f)(\omega_2)=\int_{\Omega_1} K(\omega_1,\omega_2) f (\omega_1) \, d \mu_1(\omega_1), \quad f\in \XX_1, \, \omega_2\in \Omega_2.
\]
Then, there is a bounded linear operator $T'\colon \XX_2' \to \XX_1'$ given by
\[
T'(g)(\omega_1)=\int_{\Omega_2} K(\omega_1,\omega_2) g (\omega_2) \, d \mu_2(\omega_2), \quad g\in \XX_2, \, \omega_1\in \Omega_1.
\]
We say that $T$ is a kernel operator and that $T'$ is the adjoint operator of $T$.

Given a measurable function $f\colon \Omega \to G$, where $G$ is a topological commutative monoid (for instance $G=[0,\infty]$ or $G$ a vector space), we set
\[
\supp(f)=f^{-1}(0).
\]

Given a function space $\XX$ over a $\sigma$-finite measure space $(\Omega,\Sigma,\mu)$, and a pairwise disjointly supported sequence $\XB=(x_n)_{n=1}^\infty$ in $\XX\setminus\{0\}$, we set
\[
\Sym[\XX,\XB]=\enbrace{(a_n)_{n=1}^\infty \colon \sum_{n=1}^\infty a_n \, x_n \in \XX},
\]
and we endow $\Sym[\XX,\XB]$ with the topology it inherits from $\XX$.

An \emph{Orlicz function} will be a non-decreasing left-continuous function
\[
F\colon[0,\infty)\to[0,\infty]
\]
such that $\lim_{t\to 0^+} F(t)=0$ and $F(\infty):=\lim_{t\to\infty} F(t)>0$. Note that we allow Orlicz functions to take the value $\infty$. If it is the case for the Orlicz function $F$, then there is $0<c<\infty$ such that $F^{-1}(\infty)=(c,\infty)$.

Given a $\sigma$-finite measure space $(\Omega,\Sigma,\mu)$, a function
\[
M\colon\Omega\times [0,\infty)\to [0,\infty],
\]
and $t\in[0,\infty)$,
we denote by $\nu_M(\cdot,t)$ the measure on $(\Omega,\Sigma)$ given by
\[
\nu_M(A,t)=\int_A M(\omega,t)\, d\mu(\omega), \quad A\in\Sigma.
\]
We say that $M$ is a Musielak-Orlicz function (cf.\@ \cite{Musielak1983}*{Chapter~7}) if
\begin{itemize}[leftmargin=*]
\item $M(\omega,\cdot)$ is an Orlicz function for all $\omega\in\Omega$, and
\item for every $E\in\Sigma(\mu)$ there exists $t_E\in(0,\infty)$ such that $\nu_M(E,t_E)<\infty$ and $M(\omega,t_E)>0$ for all $\omega\in E$.
\end{itemize}
If $f\in L^+(\mu)$ and $M$ is a Musielak-Orlicz function, then $M(\cdot,f(\cdot))$ is a measurable function. Hence, associated with $M$ there is a gauge
\[
\rho_M\colon L^+(\mu)\to[0,\infty], \quad f\mapsto \int_\Omega M(\omega,f(\omega))\, d\mu(\omega).
\]
Consider the sets
\begin{align*}
L^+_M&=\enbrace{f\in L^+(\mu) \colon \rho_M(tf)<\infty \textnormal{ for some } t\in(0,\infty)},\\
L_M&=\enbrace{f\in L(\mu) \colon \abs{f} \in L_M^+},\\
H^+_M&=\enbrace{f\in L^+(\mu) \colon \rho_M(tf)<\infty \textnormal{ for all } t\in(0,\infty)}, \\
H_M&=\enbrace{f\in L(\mu) \colon \abs{f} \in H_M^+},
\end{align*}
and, given $\varepsilon>0$,
\[
B_M(\varepsilon)=\enbrace{f\in L(\mu) \colon \rho_M(\abs{f})<\varepsilon}.
\]

The gauge $\rho=\rho_M$ satisfies properties \ref{FQN:M}, \ref{FQN:CM}, \ref{FQN:RFatou} and \ref{FQN:Abs}, property~\ref{FQN:SubAdd:bis} below instead of \ref{FQN:SubAdd}, and property~\ref{FQN:Simple:bis} below instead of \ref{FQN:Simple}.
\begin{enumerate}[label=(F.\arabic*),widest=10,series=fqn, leftmargin=*,resume]
\item\label{FQN:SubAdd:bis} There are $\kt$ and $\rt$ in $(0,\infty)$ such that, for all $f$, $g\in L^+(\mu)$,
\[
\rho(f+g)\le\kt \enpar{ \rho( \rt f)+\rho(\rt g)}.
\]
\item\label{FQN:Simple:bis} For every $E\in\Sigma(\mu)$ there is $u_E\in(0,\infty)$ such that $\rho(u_E \chi_E)<\infty$.
\end{enumerate}
In fact, we can choose $\ct=1$ in \ref{FQN:RFatou}, $\kt=1$ and $\rt=2$ in \ref{FQN:SubAdd:bis}, and $u_E=t_E$ in \ref{FQN:Simple:bis}. Consequently,
\[
u B_M(\varepsilon), \quad u,\, \varepsilon\in(0,\infty),
\]
is neighbourhood basis at the origin for a complete vector topology on $L_M$. Besides, the $F$-space $L_M$ continuously embeds into $L(\mu)$. We say that $L_M$ is the Musielak-Orlicz space over $(\Omega,\Sigma,\mu)$ built from $M$. We denote by $L_M^0$ the closed subspace of $L_M$ generated by simple integrable functions.

By property~\ref{FQN:Abs}, the dominated convergence theorem holds in $L_M$ for sequences dominated by a function in $H_M^+$. We infer that $H_M$ is the closed linear span of
\[
\enbrace{\chi_E \colon E \in\Sigma(\mu), \chi_E\in H^+_{M}}.
\]
Hence, $H_M\subseteq L_M^0\subseteq L_M$, and $L_M^0=H_{M}$ if and only if $\chi_E\in H^+_{M}$ for all $E\in\Sigma(\mu)$. Consequently, $H_{M}=L_M^0$ provided that
\begin{enumerate}[label=(F.\arabic*),widest=10,series=fqn,leftmargin=*,resume]
\item\label{it:Doubling} there are $C$, $D\in(1,\infty)$ such that
\begin{equation*}
M(\omega,Dt)\le C M(\omega,t), \quad (\omega,t)\in\Omega\times[0,\infty).
\end{equation*}
\end{enumerate}

If
\begin{enumerate}[label=(F.\arabic*),widest=10,series=fqn,leftmargin=*,resume]
\item\label{it:QBL} there are $c$, $d\in(0,1)$ such that
\begin{equation*}
M(\omega,dt)\le c M(\omega,t), \quad (\omega,t)\in\Omega\times[0,\infty),
\end{equation*}
\end{enumerate}
then $L_M$ is a function space. Indeed, the Luxemburg map
\[
f\mapsto \rho^L_M(f):= \inf\enbrace{t\in(0,\infty) \colon \rho_M\enpar{f/t}<1}, \quad f\in L^+(\mu),
\]
besides being homogeneous, inherits from $\rho_M$ the properties \ref{FQN:M}, \ref{FQN:CM}, \ref{FQN:RFatou}, \ref{FQN:SubAdd:bis} and \ref{FQN:Simple:bis}. Hence, $\rho_M^L$ is a function quasi-norm such that $L_M=\LL_{\rho^L_M}$.

Let $0<r<\infty$ and $M$ be a Musielak-Orlicz function. If $M$ is $r$-convex, i.e., the mapping
\[
t \mapsto M\enpar{\omega, t^{1/r}}, \quad t\in[0,\infty),
\]
defines a convex function for all $\omega\in\Omega$, then \ref{it:QBL} holds, and the function space $L_M$ is lattice $r$-convex. Let $0<s<\infty$. If $M$ is $s$-concave, i.e., the mapping
\[
t\mapsto M\enpar{\omega, t^{1/s}}, \quad t\in[0,\infty),
\]
defines a concave function for all $\omega\in\Omega$, then it satisfies \ref{it:Doubling}. Therefore, if $M$ is $s$-concave and satisfies \ref{it:QBL}, then $L_M$ is a lattice $s$-concave an absolutely continuous function space, and $H_M=L_M$.

We will use the following two results whose straightforward proofs we omit.

\begin{lemma}\label{lem:AttainsNorm}
Let $M$ be a Musielak-Orlicz function satisfying \ref{it:QBL}. Let $f\in L_M\setminus\{0\}$ and $a=1/\norm{f}_M$. Then, either
\begin{enumerate}[label=(\alph*)]
\item\label{it:Attains} $\rho_M(a\abs{f})=1$ or
\item $\rho_M(a\abs{f})<1$ and $\rho_M (t\abs{f})=\infty$ for all $t\in(a,\infty)$.
\end{enumerate}
In particular, \ref{it:Attains} holds for all $f\in H_M$.
\end{lemma}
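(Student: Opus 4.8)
The plan is to reduce everything to the single-variable function
\[
\phi\colon[0,\infty)\to[0,\infty],\qquad \phi(t)=\rho_M(t\abs{f}),
\]
which is nondecreasing (monotonicity of $M(\omega,\cdot)$) and left-continuous (monotone convergence, i.e.\ \ref{FQN:RFatou} with $\ct=1$). First I would record that $a=1/\norm{f}_M\in(0,\infty)$ and that
\[
a=\sup\enbrace{t>0\colon \phi(t)<1}.
\]
Finiteness of $\norm{f}_M$ holds because $f\in L_M$ gives $\phi(t_0)<\infty$ for some $t_0>0$, whence \ref{it:QBL} yields $\phi(d^n t_0)\le c^n\phi(t_0)\to0$; positivity of $\norm{f}_M$ holds since $\norm{\cdot}_M=\rho^L_M(\abs{\cdot})$ is a quasi-norm (positive definiteness being a consequence of \ref{FQN:CM}) and $f\neq0$; and the displayed identity is merely the definition of the Luxemburg gauge read through the change of variable $t\leftrightarrow1/t$.

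From this description of $a$ and the monotonicity and left-continuity of $\phi$ I would then read off that $\phi(t)<1$ for all $t<a$, so $\phi(a)=\lim_{t\uparrow a}\phi(t)\le1$, while $\phi(t)\ge1$ for all $t>a$. If $\phi(a)=1$ we are in case~\ref{it:Attains}. Otherwise $\phi(a)<1$, and the remaining task is to show $\phi(t)=\infty$ for every $t>a$. Assume, for contradiction, that $\phi(t_1)<\infty$ for some $t_1>a$. Then $M(\cdot,t_1\abs{f(\cdot)})$ is a $\mu$-integrable majorant of $M(\cdot,t\abs{f(\cdot)})$ for $t\in[a,t_1]$ (its integral is $\phi(t_1)<\infty$); moreover, since $\phi(t_1)<\infty$ and $a<t_1$, for $\mu$-a.e.\ $\omega$ the Orlicz function $M(\omega,\cdot)$ is finite and continuous at $a\abs{f(\omega)}$, so $M(\omega,t\abs{f(\omega)})\to M(\omega,a\abs{f(\omega)})$ as $t\downarrow a$. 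Dominated convergence then gives $\lim_{t\downarrow a}\phi(t)=\phi(a)<1$, so $\phi(t)<1$ for some $t\in(a,t_1)$, contradicting $a=\sup\{t:\phi(t)<1\}$. Hence $\phi(t)=\infty$ for all $t>a$, which is case~(b). Finally, if $f\in H_M$ then $\phi(t)<\infty$ for all $t>0$, so case~(b) cannot occur and \ref{it:Attains} holds, which is the last assertion.

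The one point that requires a little care is the right-continuity step: it uses that an Orlicz function $M(\omega,\cdot)$ is continuous on the set where it is finite — its only possible discontinuity being the jump to the value $+\infty$ — which is precisely what makes the dominated convergence argument go through. Everything else is routine bookkeeping with the Luxemburg gauge and with the monotone and dominated convergence theorems, which is no doubt why the authors describe the proof as straightforward.
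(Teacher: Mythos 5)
The paper omits the proof of this lemma (it is declared ``straightforward''), so there is no official argument to compare against; your reduction to the one-variable function $\phi(t)=\rho_M(t\abs{f})$, the identity $a=\sup\enbrace{t>0:\phi(t)<1}$, the verification that $0<a<\infty$, and the use of left-continuity to get $\phi(a)\le 1$ are all correct and surely the intended route. The gap is exactly at the step you flag yourself: the claim that an Orlicz function is continuous wherever it is finite, ``its only possible discontinuity being the jump to $+\infty$.'' Under the paper's definition an Orlicz function is only required to be nondecreasing and left-continuous with $\lim_{t\to0^+}F(t)=0$; finite right-jumps are not excluded. Where $M(\omega,\cdot)$ has such a jump at $a\abs{f(\omega)}$, the pointwise convergence $M(\omega,t\abs{f(\omega)})\to M(\omega,a\abs{f(\omega)})$ as $t\downarrow a$ fails, and the dominated-convergence step collapses.

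This is not a defect you can patch, because with such an $M$ the statement itself is false. Take $\Omega=[0,1]$ with Lebesgue measure, $M(\omega,t)=F(t)$ where $F(t)=0$ for $t\le 1$ and $F(t)=t$ for $t>1$ (a legitimate Orlicz function, and \ref{it:QBL} holds with $c=d=1/2$), and $f=\chi_\Omega$. Then $\phi=F$, so $\norm{f}_M=1$ and $a=1$, while $\rho_M(a\abs{f})=F(1)=0<1$ and $\rho_M(t\abs{f})=t<\infty$ for every $t>1$: neither alternative holds, and since $f\in H_M$ the final assertion fails too. So the lemma tacitly assumes that each $M(\omega,\cdot)$ is continuous on the set where it is finite --- a property that does hold for the functions $\vPhi_{\pp(\omega)}$ and $\vPsi_{\pp(\omega)}$ to which the lemma is actually applied, including $\pp(\omega)=\infty$, where the only jump is to the value $\infty$ and is dominated away by the integrability of $M(\cdot,t_1\abs{f(\cdot)})$. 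Under that additional hypothesis your argument is complete and correct; without it, the justification you give for the right-continuity of $\phi$ at $a$ is simply not available from the paper's definitions.
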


\begin{theorem}(see \cite{AnsorenaMarcos2025})\label{thm:MusContinuousEmbed}
Let $M$ and $N$ be Musielak-Orlicz functions on a $\sigma$-finite measure space $(\Omega,\Sigma,\mu)$. Suppose that there is $c\in(0,\infty)$ such that
\begin{itemize}
\item $\max\enbrace{\nu_M(\Omega,c),\nu_N(\Omega,c)}<\infty$,
\item $\min\enbrace{M(\omega,c), N(\omega,c)}>0$ for all $\omega\in\Omega$, and
\item $M(\omega,t)\le N(\omega,t)$ for all $\omega\in\Omega$ and $t\in[c,\infty)$.
\end{itemize}
Then $L_N(\mu)\subseteq L_M(\mu)$ continuously.
\end{theorem}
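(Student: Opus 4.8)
The plan is to separate the statement into an algebraic part, namely the set-theoretic inclusion $L_N(\mu)\subseteq L_M(\mu)$, and a topological part, namely the continuity of this inclusion.

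\emph{Algebraic inclusion.} Let $f\in L_N(\mu)$ and choose $t\in(0,\infty)$ with $\rho_N(t\abs{f})<\infty$; I will check that this same $t$ witnesses $f\in L_M(\mu)$. Put $g=t\abs{f}\in L^+(\mu)$ and split $\Omega=A\cup B$, where $A=\enbrace{\omega\in\Omega\colon g(\omega)\ge c}$ and $B=\Omega\setminus A$. On $A$ the third hypothesis gives $M(\omega,g(\omega))\le N(\omega,g(\omega))$, whence $\int_A M(\omega,g(\omega))\,d\mu(\omega)\le \rho_N(g)<\infty$. On $B$ the monotonicity of $M(\omega,\cdot)$ gives $M(\omega,g(\omega))\le M(\omega,c)$, whence $\int_B M(\omega,g(\omega))\,d\mu(\omega)\le \nu_M(\Omega,c)<\infty$. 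Adding the two bounds yields $\rho_M(t\abs{f})<\infty$, i.e.\@ $f\in L_M(\mu)$. Note that this step uses only $\nu_M(\Omega,c)<\infty$ together with the pointwise comparison on $[c,\infty)$.

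\emph{Continuity.} Here I would appeal to the closed graph theorem. Both $L_M$ and $L_N$ are $F$-spaces, and each of them embeds continuously into the Hausdorff topological vector space $L(\mu)$ (convergence in measure on finite-measure sets), as recalled above. Hence, if $f_n\to f$ in $L_N$ and $f_n\to h$ in $L_M$, then $f_n\to f$ and $f_n\to h$ in $L(\mu)$, so $f=h$; thus the inclusion $L_N\hookrightarrow L_M$ has closed graph and is therefore continuous.

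I do not anticipate a genuine obstacle: the argument is soft. The only subtlety worth flagging is that one cannot expect a modular or quasi-norm inequality valid on all of $L^+(\mu)$, since $M\le N$ is assumed only for arguments $\ge c$; this is precisely why the level-set decomposition is needed and why the conclusion is a continuous inclusion rather than a norm-one embedding. If a quantitative estimate were desired, one could instead bound the Luxemburg functional of $M$ by that of $N$ using the same splitting, but the closed graph theorem makes this superfluous.
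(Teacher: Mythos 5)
Your proof is correct. The paper itself gives no proof of this theorem (it defers to the cited reference), so there is nothing internal to compare against; your two-step argument is complete: the level-set splitting $\Omega=\{t\abs{f}\ge c\}\cup\{t\abs{f}<c\}$ together with $\nu_M(\Omega,c)<\infty$ gives the set inclusion, and the closed graph theorem between $F$-spaces (both of which the paper has already noted embed continuously into the Hausdorff space $L(\mu)$) gives continuity. The only remark worth adding is that your own splitting already yields the quantitative modular estimate $\rho_M(g)\le \rho_N(g)+\nu_M(\Omega,c)$ for \emph{every} $g\in L^+(\mu)$, not just those with finite $\rho_N$; from this one reads off directly that $B_N(\varepsilon)$ is mapped into $B_M(\varepsilon+\nu_M(\Omega,c))$ and, after rescaling, that the inclusion is continuous at the origin, so the closed graph theorem can be avoided entirely if one prefers an effective bound. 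Either route is sound.
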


Next, we detail three types of Musielak-Orlicz spaces we will deal with.

\subsection{Musielak-Orlicz sequence spaces}
We regard Musielak-Orlicz functions over $\NN$, which we call Musielak-Orlicz sequences, as sequences of Orlicz functions. Given an Musielak-Orlicz sequence $\Fb=(F_n)_{n=1}^\infty$, we denote by $\ell_{\Fb}$ the Musielak-Orlicz space built from $\Fb$. Musielak-Orlicz sequence spaces naturally appear when studying the unconditional basic sequence structure of Musielak-Orlicz function spaces. Indeed, if $M$ is a Musielak-Orlicz function over a $\sigma$-finite measure space $(\Omega,\Sigma,\mu)$, and $\XB=(x_n)_{n=1}^\infty$ is a pairwise disjointly supported sequence in $L_M\setminus\{0\}$, then $\Sym[L_M,\XB]=\ell_{\Fb}$, where $\Fb=(F_n)_{n=1}^\infty$ is given by
\[
F_n(t)=\int_{\Omega} M\enpar{\omega, t \abs{x_n(\omega)}} \, d\mu(\omega), \quad n\in\NN.
\]

We bring up a result by Musielak that we will use.

\begin{theorem}[\cite{Musielak1983}*{Theorem 8.11}]\label{thm:MusEmbed}
Consider two Musielak-Orlicz sequences $\Fb=(F_n)_{n=1}^\infty$ and $\Gb=(G_n)_{n=1}^\infty$. Then, $\ell_{\Fb} \subseteq\ell_{\Gb}$ continuously if and only if there exist $\bm{a}=(a_n)_{n=1}^\infty\in\ell_1$ and $\delta,b$ and $C$ in $(0,\infty)$ such that
\[
G_n(t) \le a_n+ C F_n(bt)
\]
for all $n\in\NN$ and all $t\in(0,\infty)$ with $F_n(t)< \delta$.
\end{theorem}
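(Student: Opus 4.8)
The plan is to prove the two implications separately. I set up notation by regarding $\ell_{\Fb}$ and $\ell_{\Gb}$ as the Musielak-Orlicz spaces over $\NN$ with the counting measure, writing $\rho_{\Fb}(x)=\sum_n F_n(\abs{x_n})$ and $\rho_{\Gb}(x)=\sum_n G_n(\abs{x_n})$ for the attached modulars, and recalling that the sets $uB_{\Fb}(\varepsilon)$, $uB_{\Gb}(\varepsilon)$ form neighbourhood bases at the origin.

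\emph{Sufficiency.} First I would note that it is enough to prove the set inclusion $\ell_{\Fb}\subseteq\ell_{\Gb}$: both are $F$-spaces continuously embedded in the space of all scalar sequences (with the topology of coordinatewise convergence), so the inclusion map has closed graph and is therefore continuous. After enlarging $b$ and $C$ I may assume $b,C\ge1$. Given $x\in\ell_{\Fb}$, I pick $s>0$ with $\rho_{\Fb}(s\abs{x})<\infty$; then $F_n(s\abs{x_n})\to0$, hence $F_n(s\abs{x_n})<\delta$ for $n>N$, and the hypothesis applied with $t=s\abs{x_n}$ (legitimate since $F_n(\tfrac sb\abs{x_n})\le F_n(s\abs{x_n})<\delta$) gives
\[
\sum_{n>N}G_n\Bigl(\tfrac sb\abs{x_n}\Bigr)\le\sum_{n>N}a_n+C\sum_{n>N}F_n(s\abs{x_n})<\infty .
\]
As each $G_n$ is finite near $0$, a further shrinking of $s/b$ makes the finitely many terms with $n\le N$ finite as well, so $x\in\ell_{\Gb}$.

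\emph{Necessity --- reduction.} Continuity of the inclusion and the modular neighbourhood bases provide $\varepsilon\in(0,1]$ and $b\ge1$ with the property
\[
(\star)\qquad \rho_{\Fb}(z)<\varepsilon\ \Longrightarrow\ \rho_{\Gb}(z/b)<1
\]
for every nonnegative sequence $z$. Taking $z=te_n$ in $(\star)$ shows that $G_n(t/b)<1$ whenever $F_n(t)<\varepsilon$, i.e.\ $G_n(s)<1$ whenever $F_n(bs)<\varepsilon$. I then set $C:=2/\varepsilon$ and, for each $n$,
\[
a_n:=\sup\bigl\{G_n(s):F_n(bs)<\varepsilon,\ G_n(s)>CF_n(bs)\bigr\}\in[0,1]
\]
(with $\sup\emptyset=0$); by construction $G_n(s)\le a_n+CF_n(bs)$ for every $s$ with $F_n(bs)<\varepsilon$, so the proof is reduced to showing $\sum_n a_n<\infty$.

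\emph{Necessity --- the packing estimate.} For each $n$ with $a_n>0$ I choose $s_n$ with $F_n(bs_n)<\varepsilon$, $G_n(s_n)>CF_n(bs_n)$ and $G_n(s_n)>a_n/2$; then $F_n(bs_n)<G_n(s_n)/C$ and $G_n(s_n)<1$. For every subset $S$ of $\{n:a_n>0\}$ one has $\sum_{n\in S}F_n(bs_n)<\tfrac1C\sum_{n\in S}G_n(s_n)$, so as soon as $\sum_{n\in S}G_n(s_n)\le C\varepsilon$ the sequence $z=\sum_{n\in S}(bs_n)e_n$ obeys $\rho_{\Fb}(z)<\varepsilon$, and $(\star)$ forces $\sum_{n\in S}G_n(s_n)=\rho_{\Gb}(z/b)<1$. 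Since $C\varepsilon=2$ and each $G_n(s_n)<1$, a greedy selection shows that $\sum_{n:a_n>0}G_n(s_n)$ cannot reach $1$: otherwise a finite $S$ with $\sum_{n\in S}G_n(s_n)\in[1,2]$ would exist, against the previous sentence. Hence $\tfrac12\sum_{n:a_n>0}a_n\le\sum_{n:a_n>0}G_n(s_n)<1$, so $(a_n)\in\ell_1$. What remains is to recast the bound $G_n(s)\le a_n+CF_n(bs)$, obtained under $F_n(bs)<\varepsilon$, into the stated form, which only asks for $F_n(t)<\delta$: one keeps $b$ and $C$, takes $\delta$ small, and absorbs into finitely many of the $a_n$ the coordinates for which some $t$ has $F_n(t)<\delta$ yet $F_n(bt)\ge\varepsilon$ (this includes coordinates where $F_n$ or $G_n$ takes the value $+\infty$ in the relevant range); such coordinates are finite in number, since otherwise one builds a sequence in $\ell_{\Fb}\setminus\ell_{\Gb}$. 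I expect this last adjustment to be routine; the genuine obstacle is the packing estimate, where the single \emph{global} containment $(\star)$ is converted into the \emph{summable} family $(a_n)$ by extracting the witnesses $s_n$ greedily against the budget $\rho_{\Fb}(\,\cdot\,)<\varepsilon$.
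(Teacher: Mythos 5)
The paper offers no proof of this statement---it is quoted from Musielak's book---so the comparison here is against the standard argument, which your proposal essentially reproduces. Your sufficiency step (closed graph plus termwise summation after rescaling by $1/b$) and the core of your necessity step---extracting the single modular implication $(\star)$ from continuity and converting it into a summable sequence $(a_n)$ by greedily packing the witnesses $s_n$ against the budget $\rho_{\Fb}<\varepsilon$---are correct. Note, however, that what these steps actually deliver is the inequality $G_n(s)\le a_n+CF_n(bs)$ for all $s$ with $F_n(bs)<\varepsilon$, i.e.\ the hypothesis set of Musielak's original formulation, in which the restriction on $t$ is placed on $F_n(bt)$ rather than on $F_n(t)$.

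The genuine gap is the final ``recasting'' step, which you flag as routine but which cannot be carried out. Your claim that only finitely many coordinates can admit some $t$ with $F_n(t)<\delta$ yet $F_n(bt)\ge\varepsilon$ (on pain of producing an element of $\ell_{\Fb}\setminus\ell_{\Gb}$) is false, and no adjustment will close the gap, because the statement in the form printed here fails for the class of extended-valued, left-continuous Orlicz functions the paper allows. Take $F_n=\chi_{(1,\infty)}$ and $G_n=n\,\chi_{(1/2,\infty)}$ for every $n$: these form admissible Musielak--Orlicz sequences, and $\ell_{\Fb}=\ell_{\Gb}=\ell_\infty$ as topological vector spaces, so the continuous inclusion holds; yet for any $\delta,b,C>0$ and any $(a_n)\in\ell_1$ the point $t=1$ satisfies $F_n(1)=0<\delta$ while $G_n(1)=n>a_n+C\ge a_n+CF_n(b)$ once $n$ is large. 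The obstruction lives exactly in the region $F_n(t)<\delta\le F_n(bt)$, which cannot be tested by elements of $\ell_{\Fb}$, since membership only constrains the modular of \emph{some} scalar multiple of a sequence. The correct conclusion is that your packing argument proves the theorem in full with the restriction $F_n(bt)<\delta$ (which is also all that the paper's subsequent applications require, as they compare the relevant functions uniformly on $[0,1]$); the right fix is to amend the statement rather than to force the proof into the printed form.
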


\subsection{Orlicz sequence spaces}
Given an Orlicz function $F$ and a measure space $(\Omega,\Sigma,\mu)$, the Orlicz space $L_F(\mu)$ is the Musielak-Orlicz space built from the Musielak-Orlicz function $M$ over $(\Omega,\Sigma,\mu)$ given by $M(\omega,t)=F(t)$ for all $\omega\in\Omega$ and $t\in[0,\infty)$. If $\mu$ is the counting measure on $\NN$, we set $\ell_F=L_F(\mu)$, and we call $\ell_F$ the Orlicz sequence space built from $F$. Orlicz sequence spaces are Musielak-Orlicz sequence spaces built from constant Musielak-Orlicz sequences. Applying Theorem~\ref{thm:MusEmbed} in this particular situation yields the following result.

\begin{corollary}
Given two Orlicz functions $F$ and $G$, then $\ell_G\subseteq \ell_F$ if and only if there are $b$, $c$ and $C$ in $(0,\infty)$ such that
\[
G(t)\le C F(bt), \quad 0\le t \le c.
\]
\end{corollary}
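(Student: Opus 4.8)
The plan is to derive this as a direct corollary of Theorem~\ref{thm:MusEmbed} by specializing to constant Musielak-Orlicz sequences. Given two Orlicz functions $F$ and $G$, form the constant Musielak-Orlicz sequences $\Fb=(F_n)_{n=1}^\infty$ and $\Gb=(G_n)_{n=1}^\infty$ with $F_n=F$ and $G_n=G$ for all $n$; then by definition $\ell_F=\ell_{\Fb}$ and $\ell_G=\ell_{\Gb}$. Theorem~\ref{thm:MusEmbed} tells us that $\ell_G\subseteq\ell_F$ continuously if and only if there exist $\bm{a}=(a_n)_{n=1}^\infty\in\ell_1$ and $\delta,b,C\in(0,\infty)$ such that $G(t)\le a_n+CF(bt)$ for all $n$ and all $t$ with $F(t)<\delta$. (I should note in passing that, since both spaces are function spaces, continuity of the inclusion is automatic, so the two corollaries quoting ``$\subseteq$'' and ``$\subseteq$ continuously'' coincide.)

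The substantive point is to convert the condition ``$G(t)\le a_n+CF(bt)$ whenever $F(t)<\delta$'', which is phrased with the summable sequence $\bm a$, into the clean pointwise bound ``$G(t)\le CF(bt)$ for $0\le t\le c$'' appearing in the statement. First I would prove the easy direction: if $G(t)\le CF(bt)$ for $0\le t\le c$, then one can take $a_n=0$ for all $n$ (so $\bm a\in\ell_1$ trivially), and choose $\delta=F(c)$ if $F(c)>0$ — or any $\delta$ with $F^{-1}([0,\delta))\subseteq[0,c]$, which exists because $F$ is non-decreasing and left-continuous with $\lim_{t\to0^+}F(t)=0$ — so that $F(t)<\delta$ forces $t\le c$ and hence $G(t)\le CF(bt)$; Theorem~\ref{thm:MusEmbed} then gives $\ell_G\subseteq\ell_F$.

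For the converse, suppose $\ell_G\subseteq\ell_F$ continuously, and take $\bm a\in\ell_1$ and $\delta,b,C$ as provided by Theorem~\ref{thm:MusEmbed}. Since $\bm a\in\ell_1$ we have $\inf_n a_n=0$; picking $n$ with $a_n$ as small as we like shows that in fact $G(t)\le CF(bt)$ for every $t$ with $F(t)<\delta$, because the left side does not depend on $n$ while $a_n$ can be sent to $0$. (More carefully: $G(t)\le \inf_n a_n + CF(bt) = CF(bt)$.) It remains to translate ``$F(t)<\delta$'' into an interval $[0,c]$: since $F$ is non-decreasing and left-continuous with $\lim_{t\to0^+}F(t)=0$, the set $\{t\ge0\colon F(t)<\delta\}$ is an interval containing a neighbourhood of $0$, hence contains some $[0,c]$ with $c>0$ (if $F\equiv0$ on $[0,\infty)$ the bound is vacuous; otherwise $F(c/2)<\delta$ for small enough $c$). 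Thus $G(t)\le CF(bt)$ for $0\le t\le c$, which is the asserted condition.

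I do not anticipate a genuine obstacle here: the argument is essentially bookkeeping around the reduction to constant sequences and the observation that a summable sequence has infimum zero. The only place demanding a little care is the passage between the ``threshold on $F(t)$'' formulation in Theorem~\ref{thm:MusEmbed} and the ``threshold on $t$'' formulation in the corollary, which hinges on the elementary monotonicity and left-continuity properties built into the definition of an Orlicz function; one should also keep in mind the degenerate cases (such as $F$ taking the value $\infty$ past some point, or $F$ identically zero) to make sure the stated equivalence holds verbatim in all of them.
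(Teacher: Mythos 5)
Your overall strategy---specialize Theorem~\ref{thm:MusEmbed} to constant sequences and then absorb the $\ell_1$ perturbation by using $\inf_n a_n=0$---is exactly the route the paper intends (it offers no argument beyond ``apply Theorem~\ref{thm:MusEmbed}''), and the bookkeeping in your last two paragraphs is essentially sound. However, there is a genuine error at the step where you quote the specialization. Applied with $F_n=F$ and $G_n=G$ for all $n$, Theorem~\ref{thm:MusEmbed} characterizes the inclusion $\ell_{\Fb}\subseteq\ell_{\Gb}$, i.e.\ $\ell_F\subseteq\ell_G$: it is the Orlicz function of the \emph{containing} space that gets dominated. You instead assert that it characterizes $\ell_G\subseteq\ell_F$. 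With the domination running the way you (and the printed corollary) have it, the stated equivalence is false: take $F=\vPhi_1$ and $G=\vPhi_2$, so that $G(t)=t^2\le t=F(t)$ for $0\le t\le 1$, yet $\ell_G=\ell_2\not\subseteq\ell_1=\ell_F$. What your computation actually establishes is the correct statement ``$\ell_F\subseteq\ell_G$ if and only if $G(t)\le CF(bt)$ for $0\le t\le c$''; the corollary as printed has the inclusion reversed, and your derivation inherits this rather than detecting it. Either restate the conclusion with the inclusion $\ell_F\subseteq\ell_G$, or keep $\ell_G\subseteq\ell_F$ and replace the displayed condition by $F(t)\le CG(bt)$ on $[0,c]$.

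A smaller point: in the ``easy'' direction you claim that some $\delta$ with $F^{-1}([0,\delta))\subseteq[0,c]$ always exists. It does not when $F$ vanishes on an interval strictly larger than $[0,c]$; in that degenerate case $\ell_F=\ell_\infty$, the hypothesis forces $G$ to vanish near the origin (at least when $b\le 1$; for $b>1$ one must argue separately), and the inclusion should be checked directly. You flagged this caveat at the end, but it does require the extra sentence rather than the assertion that such a $\delta$ exists in general.
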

Given a convex Orlicz function $F$ we define $F^*\colon[0,\infty)\to[0,\infty]$ as the optimal function $G$ such that
\[
uv\le F(u)+G(v), \quad u,v\in[0,\infty].
\]
The dual function $F^*$ of $F$ is a convex Orlicz function, and the conjugate space of $L_F(\mu)$ is $L_{F^*}(\mu)$. We record some properties of the mapping $F\mapsto F^*$ that we will need.

Given $p\in(0,\infty]$ we consider the potential Orlicz functions
\[
\vPhi_p,\, \vPsi_p\colon[0,\infty)\to [0,\infty), \quad \vPhi_p(t)= t^p, \, \vPsi_p(t)= \frac{ t^p}{p},
\]
with the convention that $1^\infty=0$. If $p\ge 1$, so that $\vPhi_p$ and $\vPsi_p$ are convex, we denote by $p'$ its conjugate exponent defined by $1/p+1/p'=1$. Note that $\vPhi_\infty=\vPsi_\infty=\infty \chi_{(1,\infty)} $.

We record a couple of lemmas about Orlicz functions. The first of them is clear and well-known.

\begin{lemma}\label{lem:dualpot}
$(\vPsi_p)^*=\vPsi_{p'}$ for all $p\in[1,\infty]$.
\end{lemma}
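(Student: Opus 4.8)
The plan is to compute the Fenchel--Young conjugate of $\vPsi_p$ directly. First I would record the standard reformulation: for a convex Orlicz function $F$, a function $G$ satisfies $uv\le F(u)+G(v)$ for all $u,v$ exactly when $G(v)\ge\sup_{u\ge 0}\bigl(uv-F(u)\bigr)$ for every $v$, so the pointwise-smallest admissible $G$ — which the excerpt has declared to exist and to be a convex Orlicz function, namely $F^*$ — is given by
\[
F^*(v)=\sup_{u\in[0,\infty)}\bigl(uv-F(u)\bigr),\qquad v\in[0,\infty].
\]
With this formula in hand the lemma reduces to a one-variable optimisation, which I would carry out in the three cases $p\in(1,\infty)$, $p=1$, and $p=\infty$.

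For $1<p<\infty$, fix $v>0$ and note that $u\mapsto uv-u^{p}/p$ is concave on $[0,\infty)$, vanishes at $u=0$, and has derivative $v-u^{p-1}$, which vanishes at $u_{0}=v^{1/(p-1)}$. Substituting and using $1+\tfrac{1}{p-1}=\tfrac{p}{p-1}=p'$ together with $1-\tfrac{1}{p}=\tfrac{1}{p'}$ gives the maximum value $u_{0}v-u_{0}^{p}/p=v^{p'}-v^{p'}/p=v^{p'}/p'$; since the value at $v=0$ is $0$, we conclude $(\vPsi_{p})^{*}=\vPsi_{p'}$. (Equivalently, one may simply invoke the classical Young inequality $uv\le u^{p}/p+v^{p'}/p'$ together with its well-known equality case to identify $\vPsi_{p'}$ as the optimal $G$.)

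The two endpoint cases are degenerate and are handled by inspection, with care for the conventions $1^{\infty}=0$ and $\vPsi_{\infty}=\infty\chi_{(1,\infty)}$. When $p=1$ one has $uv-\vPsi_{1}(u)=u(v-1)$, whose supremum over $u\ge 0$ is $0$ for $v\le 1$ and $\infty$ for $v>1$, i.e.\ $\vPsi_{\infty}(v)$, consistent with $1'=\infty$. When $p=\infty$ one has $uv-\vPsi_{\infty}(u)=uv$ for $u\le 1$ and $-\infty$ for $u>1$, whose supremum over $u\ge 0$ is $v=\vPsi_{1}(v)$, consistent with $\infty'=1$.

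There is essentially no serious obstacle: this is the textbook fact that conjugation swaps $\vPsi_{p}$ and $\vPsi_{p'}$, which the paper itself flags as clear and well-known. The only points needing a little attention are checking that the supremum formula for $F^{*}$ really yields the pointwise-smallest valid $G$, so that it coincides with the definition of $F^{*}$ adopted in the excerpt, and treating the boundary exponents $p\in\{1,\infty\}$ with the correct conventions at $0$ and $\infty$.
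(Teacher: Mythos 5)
Your computation is correct and complete: the paper deliberately omits a proof of this lemma, labelling it ``clear and well-known,'' and your direct evaluation of $\sup_{u\ge 0}\bigl(uv-\vPsi_p(u)\bigr)$ in the three cases $1<p<\infty$, $p=1$, $p=\infty$ (with the conventions $1^\infty=0$ and $\vPsi_\infty=\infty\chi_{(1,\infty)}$) is precisely the standard argument the authors have in mind. No gaps.
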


\begin{lemma}\label{lem:OrliczBound}
Let $F$ be a convex Orlicz function. Let $c$, $C$, $A$ and $v$ in $(0,\infty)$ be such that
\[
uv \le C F(u) + A, \quad 0\le u \le c.
\]
Then, $F^*(w)\le A/C$, where
\[
w=\min{\enbrace{\frac{v}{C}, \frac{F(c)}{c}}}.
\]
\end{lemma}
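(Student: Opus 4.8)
The statement to prove is Lemma~\ref{lem:OrliczBound}: from the inequality $uv \le C F(u) + A$ valid for $0 \le u \le c$, deduce that $F^*(w) \le A/C$ where $w = \min\{v/C,\, F(c)/c\}$. The natural approach is to unpack the definition of the conjugate function $F^*$. Since $F$ is convex, $F^* (w) = \sup_{u \ge 0} (uw - F(u))$ (with the convention that values at $\infty$ are handled by left-continuity / monotonicity). So I must show $uw - F(u) \le A/C$ for every $u \in [0,\infty]$, splitting into the range $u \le c$ and the range $u > c$.

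\textbf{Key steps.} First, for $0 \le u \le c$: rewrite the hypothesis as $u(v/C) \le F(u) + A/C$, i.e. $u(v/C) - F(u) \le A/C$. Since $w \le v/C$, monotonicity in the $w$-variable gives $uw - F(u) \le u(v/C) - F(u) \le A/C$. Second, for $u > c$: here I use $w \le F(c)/c$. Convexity of $F$ with $F(0) = 0$ makes $F(u)/u$ nondecreasing, so for $u \ge c$ one has $F(u) \ge u F(c)/c \ge uw$, hence $uw - F(u) \le 0 \le A/C$. (One should also note $A/C > 0$ since $A, C > 0$, so the bound $0 \le A/C$ is genuinely available; and if $F(c) = \infty$ then $w = v/C$ is forced by the hypothesis being vacuous unless $c$ can be shrunk, but in any case $F(u) = \infty$ for $u \ge c$ makes the second range trivial too.) Taking the supremum over all $u \ge 0$ yields $F^*(w) \le A/C$.

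\textbf{Main obstacle.} There is no serious obstacle; the only care needed is bookkeeping around the possibly infinite values of $F$ and the degenerate cases ($c$ very small so that $F(c) = 0$ is impossible since $F$ is an Orlicz function with $F(t) > 0$ for $t$ bounded away from $0$, but one should still confirm $F(c)/c > 0$ so that $w > 0$ and the statement is non-vacuous). I would also double-check that the passage "$F$ convex, $F(0)=0$ $\Rightarrow$ $F(u)/u$ nondecreasing" is applied correctly at the endpoint $u = c$ and handles left-continuity. Once these routine verifications are in place, the two-case estimate on $uw - F(u)$ closes the argument immediately.
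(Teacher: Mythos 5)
Your proof is correct and follows the same route as the paper: split into $u\le c$ (where $w\le v/C$ and the hypothesis give $uw\le F(u)+A/C$) and $u\ge c$ (where $w\le F(c)/c$ and convexity give $uw\le F(u)$), then read off $F^*(w)\le A/C$ from the definition of the conjugate. The extra bookkeeping you flag about infinite values and endpoints is harmless but not needed beyond what you already wrote.
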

\begin{proof}
On the one hand, if $u\in[0,c]$,
\[
uw\le \frac{uv}{C}\le F(u)+\frac{A}{C}.
\]
On the other hand, if $u\in[c,\infty)$, by convexity,
\[
u w \le u \frac{F(c)}{c} \le F(u) \le F(u)+ \frac{A}{C}.\qedhere
\]
\end{proof}

\subsection{Variable exponent Lebesgue spaces}
A variable exponent on a measure space $(\Omega,\Sigma,\mu)$ will be a measurable function
\[
\pp\colon \Omega\to (0,\infty].
\]
The variable exponent Lebesgue space $L_{\pp}$ associated with $\pp$ will be the $F$-space over $\Omega$ built from the Musielak-Orlicz function
\[
(\omega,t)\mapsto M_{\pp}(\omega,t)= \vPhi_{\pp(\omega)}(t), \quad (\omega,t)\in\Omega\times[0,\infty),
\]
that is $L_{\pp}=L_{M_{\pp}}$. We denote by $\rho_{\pp}:=\rho_{M_{\pp}}$ the modular constucted from $M_{\pp}$. If the implicit measure $\mu$ over $(\Omega,\Sigma)$ we are considering could be in doubt, we put $L_{\pp}=L_{\pp}(\mu)$. Set also $H_{\pp}=H_{M_{\pp}}$.

Recall that the support of a Borel measure $\nu$ on a Lindel\"of space $X$, denoted by $\supp(\nu)$, is the smallest closed set $K$ such that $\nu(X\setminus K)=0$. Let
\[
R(\pp):=\supp(\pp(\mu))\subseteq[0,\infty]
\]
be the essential range of $\pp$, that is the set of all $p\in[0,\infty]$ such that
\[
\mu\enpar{V\cap \rang(\pp)}>0
\]
for all measurable neighbourhoods $V$ of $p$. Set
\[
\pp^-:=\min\enpar{R(\pp)}, \quad \pp^+=\max\enpar{R(\pp)}.
\]
Suppose that $\pp^->0$. Then, $M_{\pp}$ is $\pp^-$-convex, whence $L_{\pp}$ is a lattice $\pp^-$-convex function space over $(\Omega,\Sigma,\mu)$. We denote by $\rho^L_{\pp}$ the function quasi-norm constructed from $\rho_{\pp}$.

We will also consider the Musielak-Orlicz function over $(\Omega,\Sigma,\mu)$ given by
\[
(\omega,t) \mapsto M^a_{\pp}(\omega,t):= \vPsi_{\pp(\omega)}(t).
\]
Let $\rho^a_{\pp}$ be the modular associated with $M^a_{\pp}$. Let $\rho_{\pp}^{L,a}$ be the function quasi-norm constructed from $\rho^a_{\pp}$. Notice that
\[
\pp^{-} \rho^{L,a}_{\pp} \le \rho^L_{\pp} \le \sup\enbrace{p^{1/p} \colon \pp^{-}\le p <\infty} \rho^{L,a}_{\pp}
\]
Since $p^{1/p}$ is bounded away from zero at infinity as long as $p$ is bounded away from zero, the function quasi-norms $\rho^L_{\pp}$ and $\rho_{\pp}^{L,a}$ are equivalent. Both $\rho^L_{\pp}$ and $\rho^{L,a}_{\pp}$ are function $\pp^c$-norms where
\[
\pp^c=\min\{1,\pp^{-}\}.
\]
We denote by $\norm{\cdot}_{\pp}$ the $\pp^c$-norm for $L_{\pp}$ given by $\norm{f}_{\pp}=\rho_{\pp}^L\enpar{\abs{f}}$ for all $f\in L_{\pp}$.

When dealing with variable exponent Lebesgue spaces we must be aware that the identity
\[
\vPsi_p(st) = \vPhi_p(s) \vPsi_p(t)
\]
breaks down when $p=\infty$ and $0\le s <1 <t <\infty$. So, we must pay attention to the set
\[
\Omega^{\pp}_\infty:=\pp^{-1}(\infty).
\]
In fact, for any $f\in L^+(\mu)$, if $\norm{\cdot}_{\infty}$ denotes the norm of $L_\infty(\mu|_{\Omega^{\pp}_\infty})$,
\[
\rho^L_{\pp}(f)=\max\enbrace{ \norm{f|_{\Omega^{\pp}_\infty}}_{\infty} , \inf \enbrace{ t>0 \colon
\int_{\Omega\setminus\Omega^{\pp}_\infty} \frac{{f(\omega)}^{\pp(\omega)}}{t^{\pp(\omega)}} d\mu(\omega)<1}}.
\]

If $\pp^+<\infty$, then $M_{\pp}$ is $\pp^+$-concave. Consequently, if $\pp^->0$ and $\pp^+<\infty$, then $L_{\pp}$ is lattice $\pp^+$-concave, whence $H_{\pp}=L_{\pp}$ and $L_{\pp}$ is absolutely continuous.

If $\mu$ is a finite measure and $\pp$ and $\pq$ are variable exponents over $\mu$ with $\pp\le\pq$, then $L_{\pq} \subseteq L_{\pp}$ continuously by Theorem~\ref{thm:MusContinuousEmbed}.

Given a measurable function $h\colon\Omega\to(0,\infty)$, the pointwise multiplier
\[
f\mapsto f h
\]
defines an isomorphism from $L_{\pp}(\nu)$ onto $L_{\pp}(\mu)$, where
\[
d \nu = h_{\pp}\, d\mu, \quad h_{\pp}=h^{\pp} \chi_{\Omega\setminus\Omega^{\pp}_\infty} + h \chi_{\Omega^{\pp}_\infty}.
\]
Since we can choose $h$ so that $\int_\Omega h_{\pp} \, d\mu=1$, any variable exponent Lebesgue space is isomorphic to a variable exponent Lebesgue space over a probabilty space. If this probability space is nonatomic and separable then, by \cite{Caratheodory1939}, the corresponding variable exponent Lebesgue space is isomorphic to a variable exponent Lebesgue space over $[0,1]$.

To understand the structure of a variable exponent Lebesgue space we must take into account that if $(\Omega_j)_{j=1}^2$ is a partition of the measure space $(\Omega,\Sigma,\mu)$ then, for any variable exponent $\pp\colon\Omega \to(0,\infty]$, setting $\pp_j=\pp|_{\Omega_j}$ and $\mu_j=\mu|_{\Omega_j}$ for $j=1$, $2$,
\[
L_{\pp} \simeq L_{\pp_1} \oplus L_{\pp_2}.
\]
If $J$ is a countable set and $\pq\colon J\to(0,\infty]$ is a function, we denote by $\ell_{\pq}$ the variable exponent Lebesgue space over the counting measure on $J$ associated to the variable exponent $\pq$. These discrete variable exponent Lebesgue space were introduced by Nakano \cite{Nakano1950} in the case when $\pq^{-}\ge 1$, and by Bourgin \cite{Bourgin1943} in the case when $\pq^{+}\le 1$. In general, we call $\ell_{\pq}$ a Bourgin-Nakano space.

Splitting $\sigma$-finite measures into its purely atomic part and its nonatomic part, we infer that any variable exponent Lebesgue space is isomorphic to
\[
L_{\pp}\oplus \ell_{\pq}
\]
for some nonatomic probability measure space $(\Omega,\Sigma,\mu)$, some countable set $J$ and some variable exponents $\pp\colon\Omega\to(0,\infty]$ and $\pq\colon J \to(0,\infty]$. Summing up, any variable exponent Lebesgue space over any separable $\sigma$-finite measure space is isomorphic to $L_{\pp}\oplus \ell_{\pq}$ for some countable set $J$ and some variable exponents $\pp\colon[0,1]\to(0,\infty]$ and $\pq\colon J\to (0,\infty]$.

The dual space of $L_{\pp}$ only depends on the behaviour of $\pp$ on
\[
\Omega^{\pp}_c=\pp^{-1}([1,\infty]).
\]
Indeed, we have the following.
\begin{proposition}\label{prop:nulldual}
Let $\pp\colon\Omega\to(0,1)$ be a variable exponent on a nonatomic $\sigma$-finite measure space $(\Omega,\Sigma,\mu)$. Then, the dual space of $L_{\pp}$ is null.
\end{proposition}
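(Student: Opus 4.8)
The plan is to show that every continuous linear functional $\phi$ on $L_{\pp}$ vanishes. First I reduce to characteristic functions. Since $\pp$ takes values in $(0,1)$ it is bounded and finite-valued, so for a fixed $t>0$ one has $\min\{1,t\}\le t^{\pp(\omega)}\le\max\{1,t\}$ for every $\omega\in\Omega$; consequently $\rho_{\pp}(t\abs f)<\infty$ for one $t>0$ forces it for all $t>0$, whence $H_{\pp}=L_{\pp}$. As moreover $\chi_E\in H_{\pp}^+$ for every $E\in\Sigma(\mu)$, the space $\St(\mu)$ of simple integrable functions is dense in $L_{\pp}$, so it is enough to prove $\phi(\chi_E)=0$ for each $E\in\Sigma(\mu)$. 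The engine of the argument is the following elementary fact: because $\mu$ is nonatomic, any $G\in\Sigma(\mu)$ splits into $N$ pairwise disjoint sets $A_1,\dots,A_N$ of measure $\mu(G)/N$, and then $\chi_G=\frac1N\sum_{i=1}^N N\chi_{A_i}$, so that $\abs{\phi(\chi_G)}\le\max_{1\le i\le N}\abs{\phi(N\chi_{A_i})}$ by linearity.

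Next I would run this on sets on which $\pp$ stays away from $1$. Let $G\in\Sigma(\mu)$ with $\pp\le 1-\eta$ $\mu$-a.e.\@ on $G$ for some $\eta\in(0,1)$, and split $G$ as above. For $N\ge 1$,
\[
\rho_{\pp}(N\chi_{A_i})=\int_{A_i}N^{\pp(\omega)}\,d\mu(\omega)\le N^{1-\eta}\mu(A_i)=\mu(G)\,N^{-\eta},
\]
which tends to $0$ as $N\to\infty$, uniformly in $i$. Since $\phi$ is continuous and $\rho_{\pp}(\abs g)\to 0$ implies $g\to 0$ in $L_{\pp}$, we get $\max_i\abs{\phi(N\chi_{A_i})}\to 0$; as $\phi(\chi_G)$ is independent of $N$, this yields $\phi(\chi_G)=0$.

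Now take an arbitrary $E\in\Sigma(\mu)$, and for $m\in\NN$ put $F_m=E\cap\pp^{-1}([0,1-1/m])$. On $F_m$ we have $\pp\le 1-1/m<1$, so the previous paragraph gives $\phi(\chi_{F_m})=0$. On the other hand, since $\pp<1$ everywhere, the sets $E\setminus F_m=E\cap\pp^{-1}\bigl((1-1/m,\infty)\bigr)$ decrease to a $\mu$-null set, hence $\rho_{\pp}(\chi_{E\setminus F_m})=\mu(E\setminus F_m)\to 0$ and therefore $\phi(\chi_{E\setminus F_m})\to 0$. As $\phi(\chi_E)=\phi(\chi_{F_m})+\phi(\chi_{E\setminus F_m})=\phi(\chi_{E\setminus F_m})$ for all $m$, while the left-hand side does not depend on $m$, we conclude $\phi(\chi_E)=0$. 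By density of $\St(\mu)$ and linearity, $\phi\equiv 0$, i.e.\@ $L_{\pp}^*=\{0\}$.

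The only delicate point — and the reason one cannot simply mimic the classical argument for $L_p$, $0<p<1$ — is the transition in the last paragraph: the ``high peak'' decomposition $\chi_G=\frac1N\sum N\chi_{A_i}$ produces modular-small summands only where $\pp$ is uniformly below $1$ (it is precisely the surplus in $N^{1-\eta}$ versus $N$ that forces $\mu(G)N^{-\eta}\to 0$), so when $\pp$ approaches $1$ the scaling has to be confined to the sets $F_m$ and the remainder $E\setminus F_m$ absorbed into the continuity of $\phi$. Everything else — the identifications $H_{\pp}=L_{\pp}$ and $\overline{\St(\mu)}=L_{\pp}$, the computation of $\rho_{\pp}$ on characteristic functions, and the equal-measure splitting of a nonatomic finite measure — is routine.
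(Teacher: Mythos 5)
Your proof is correct, and it is essentially self-contained where the paper's is a reduction. Both arguments share the same key localization: work first on sets where $\pp$ is bounded away from $1$ (your $G$ with $\pp\le 1-\eta$, the paper's $\Omega_{p,A}=\pp^{-1}((0,p])\cap A$), and then recover an arbitrary $E\in\Sigma(\mu)$ by exhaustion plus continuity of the functional. The difference is in how the localized case is handled: the paper observes that on such a set, with $\mu$ finite, one has $L_p(\mu)\subseteq L_{\pp}$ continuously with dense image (via Theorem~\ref{thm:MusContinuousEmbed} and density of $\St(\mu)$), so the triviality of the dual follows at once from the classical fact $(L_p(\mu))^*=\{0\}$ for $0<p<1$ on a nonatomic measure; you instead re-run Day's equal-measure splitting $\chi_G=\frac1N\sum_{i=1}^N N\chi_{A_i}$ directly against the modular $\rho_{\pp}$, using $N^{\pp(\omega)}\le N^{1-\eta}$ to make each peak modular-small. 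Your route buys a fully elementary, self-contained argument that never leaves the variable-exponent setting (and makes visible exactly where nonatomicity and $\pp<1$ enter); the paper's route buys brevity by outsourcing the splitting argument to the known constant-exponent case. All the supporting steps you use --- $H_{\pp}=L_{\pp}$, density of $\St(\mu)$, the scaling bound $\rho_{\pp}(sf)\le\max\{1,s\}\rho_{\pp}(f)$ needed to pass from modular convergence to convergence in the $F$-space topology, and the uniformity in $i$ of the estimate $\rho_{\pp}(N\chi_{A_i})\le\mu(G)N^{-\eta}$ --- are correctly justified, so there is no gap.
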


\begin{proof}
Set $\Omega_{p,A}=\pp^{-1}((0,p]) \cap A$ for each $p\in(0,1)$ and $A\in\Sigma(\mu)$. Since
\[
\bigcup_{\substack{0<p<1\\ A\in \Sigma(\mu)}} \enbrace{f\in L_{\pp} \colon \supp(f)\subseteq \Omega_{p,A}}
\]
is dense in $L_{\pp}$, it suffices to prove the result in the case when $\mu$ is finite and $\pp(\Omega)\subseteq(0,p]$ for some $0<p<1$. In this case, since $L_p(\mu)\subseteq L_{\pp}$ continuously and $\St(\mu)$ is dense in $L_{\pp}$, the result follows from the fact that $(L_p(\mu))^*=\{0\}$.
\end{proof}

We say that a variable exponent $\pp$ over a $\sigma$-finite mesure space $(\Omega,\Sigma,\mu)$ is convex if $\pp(\Omega)\subseteq[1,\infty]$. The conjugate variable exponent of $\pp$ is the convex variable exponent $\pq$ over $(\Omega,\Sigma,\mu)$ defined by
\[
\pq(\omega)=\enpar{\pp(\omega)}'= \frac{\pp(\omega)}{\pp(\omega)-1}.
\]
The well-known duality relation between $L_{\pp}$ and $L_{\pq}$ relies on the following lemma.

\begin{lemma}\label{lem:dualB}
Let $\pq$ be the conjugate variable exponent of a variable exponent $\pp$ over a $\sigma$-finite measure space $(\Omega,\Sigma,\mu)$. Let $g\in L^+(\mu)$ with $\rho^L_{\pq}(g)=1$ and $\supp(g)\subseteq \pp^{-1}([p,\infty])$ for some $p>1$. Set $f={g^{\pq-1}}$, with the convention that $0^0=0$. Then,
\[
\int_\Omega f g \, d\mu= \rho_{\pp} (f) = \rho_{\pq}(g)=1.
\]
\end{lemma}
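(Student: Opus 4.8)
\smallskip
\noindent\emph{Sketch of proof.} The plan is to verify the chain of equalities almost everywhere and then integrate; the only substantive point is to promote the Luxemburg normalization $\rho^L_{\pq}(g)=1$ to the modular identity $\rho_{\pq}(g)=1$. First record that on $\supp(g)\subseteq\pp^{-1}([p,\infty])$ one has $1\le\pq=\pp'\le p'<\infty$, so $\pq$ is essentially bounded on $\supp(g)$; that, because $p>1$, the function $g$ vanishes on $\pp^{-1}((0,1])$, hence on $\Omega^{\pq}_\infty=\pp^{-1}(1)$; and that on $\Omega^{\pp}_\infty:=\pp^{-1}(\infty)$ — the set where $\pq\equiv 1$ — the hypothesis forces $g=0$, whence $f=g^{\pq-1}=0^0=0$ there by the stated convention.

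First I would upgrade $\rho^L_{\pq}(g)=1$ to $\rho_{\pq}(g)=1$. Homogeneity together with $\pq\le p'$ on $\supp(g)$ gives $\rho_{\pq}(tg)\le t^{p'}\rho_{\pq}(g)$ for every $t\ge 1$. Since $\rho^L_{\pq}(g)=1$, there is $t>1$ with $\rho_{\pq}(g/t)<1$, so $\rho_{\pq}(g)\le t^{p'}\rho_{\pq}(g/t)<\infty$; combined with monotonicity for $t\le 1$ this shows $g\in H_{\pq}$. Therefore Lemma~\ref{lem:AttainsNorm}\,\ref{it:Attains}, applied in $L_{\pq}$, gives $\rho_{\pq}\big((1/\norm{g}_{\pq})\,g\big)=1$, and since $\norm{g}_{\pq}=\rho^L_{\pq}(g)=1$ we conclude $\rho_{\pq}(g)=1$. (Alternatively, let $t\to 1$ in $\rho_{\pq}(g/t)$: monotone convergence gives $\rho_{\pq}(g)\le 1$, and dominated convergence — valid since $\rho_{\pq}(sg)<\infty$ for some $s>1$ — gives $\rho_{\pq}(g)\ge 1$.)

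Next I would establish the pointwise identity $fg=M_{\pp}(\cdot,f)=M_{\pq}(\cdot,g)$, valid $\mu$-almost everywhere. Where $g\neq 0$ one has $\pp<\infty$, hence $\pq\in(1,p']$ and the conjugacy relation $(\pq-1)\,\pp=\pq$, and so
\[
fg=g^{\pq-1}\,g=g^{\pq},\qquad M_{\pp}(\cdot,f)=f^{\pp}=g^{(\pq-1)\pp}=g^{\pq}=M_{\pq}(\cdot,g).
\]
Where $g=0$ — in particular on $\Omega^{\pp}_\infty$ — we have $f=0$ (using $\pq\ge 1$ and $0^0=0$), so all three quantities vanish. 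Integrating over $\Omega$ and recalling that $\rho_{\pp}(f)=\int_\Omega M_{\pp}(\omega,f(\omega))\,d\mu(\omega)$ and $\rho_{\pq}(g)=\int_\Omega M_{\pq}(\omega,g(\omega))\,d\mu(\omega)$ yields $\int_\Omega fg\,d\mu=\rho_{\pp}(f)=\rho_{\pq}(g)$, whose common value equals $1$ by the previous paragraph.

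I expect the first step to be the main obstacle: converting the Luxemburg normalization into a modular one rests on $g\in H_{\pq}$, i.e.\ on $\pq$ being essentially bounded on $\supp(g)$ — exactly what the hypothesis $\supp(g)\subseteq\pp^{-1}([p,\infty])$ with $p>1$ guarantees. The other delicate point is the bookkeeping on $\Omega^{\pp}_\infty$, where the arithmetic of conjugate exponents degenerates and the convention $0^0=0$ is needed.
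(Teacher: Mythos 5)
Your argument follows the same route as the paper's (much terser) proof: the support hypothesis gives $g\in H^{+}_{\pq}$, Lemma~\ref{lem:AttainsNorm} upgrades the Luxemburg normalization to $\rho_{\pq}(g)=1$, and the pointwise identity $fg=f^{\pp}=g^{\pq}$ is integrated. Your expansion of the first step (bounding $\pq$ by $p'$ on $\supp(g)$ to conclude $g\in H^{+}_{\pq}$) is correct and supplies exactly what the paper leaves implicit.

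The one genuine slip is the assertion that the hypothesis forces $g=0$ on $\Omega^{\pp}_\infty=\pp^{-1}(\infty)$. It does not: $\infty\in[p,\infty]$, so $\supp(g)\subseteq\pp^{-1}([p,\infty])$ is perfectly compatible with $g>0$ on a non-null portion of $\pp^{-1}(\infty)$. On that set $\pq=1$ and $f=g^{0}=1$ wherever $g>0$, so $fg=g=g^{\pq}$ still holds, but $f^{\pp}=\vPhi_\infty(1)=0$ under the paper's convention $1^\infty=0$, and the middle equality $\rho_{\pp}(f)=\rho_{\pq}(g)$ genuinely needs separate attention there (one only gets $\rho_{\pp}(f)\le\rho_{\pq}(g)$, which is in fact all that the subsequent duality argument uses). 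To be fair, the paper's own one-line computation $fg=f^{\pp}=g^{\pq}$ is equally silent about $\Omega^{\pp}_\infty$; but your write-up should not dispose of that case by deriving it from a hypothesis that is not actually present. Either treat $\Omega^{\pp}_\infty$ explicitly or note that the statement implicitly assumes $g$ vanishes there.
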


\begin{proof}
Our conditions on $g$ yields $g\in H^{+}_{\pq}$, whence $\rho_{\pq}(g)=1$ by Lemma~\ref{lem:AttainsNorm}. Since $fg=f^{\pp}=g^{\pq}$, we are done.
\end{proof}

With the information we have gathered, proving that the conjugate space of $L_{\pp}$ is $L_{\pq}$ takes little effort. On the one hand, by Lemma~\ref{lem:dualpot},
\[
\int_\Omega fg\, d\mu \le 2, \quad f,g\in L^+(\mu), \, \max\enbrace{\rho^{L,a}_{\pp}(f) , \rho^{L,a}_{\pq} (g) }\le 1.
\]
Therefore, the dual function norm $\rho^*$ of $\rho_{\pq}^L$ satisfies $\rho^*\le 2 \rho_{\pq}^{L,a}$. On the other hand, by Lemma~\ref{lem:dualB} and homogeneity, $\rho^*(g)\ge \rho^L_{\pq}(g)$ for all $g\in L^+(\mu)$ with $\supp(g)\subseteq \pp^{-1}([p,\infty])$ for some $p>1$. By the Fatou property, this inequality extends to any $g\in L^+(\mu)$ with $\supp(g)\subseteq \pp^{-1}((1,\infty])$. Since the conjugate space of $L_1(\Omega^{\pq}_\infty)$ is $L_\infty(\Omega^{\pq}_\infty)$, the inequality extends to any $g\in L^+(\mu)$.
\section{Orlicz functions generated near zero by power functions}\label{sec:Orlicz.functions}\noindent
Given a signed measure $\mu$ on $(0,\infty]$ we define
\[
\bpsi_\mu\colon [0,1]\to \RR,\quad t \mapsto \int_{(0,\infty]} \vPsi_p(t) \, d\mu(p).
\]
Let $\delta_p$ be the Dirac measure on $p\in (0,\infty]$. We have
\[
\bpsi_p:=\bpsi_{\delta_p}=\vPsi_p|_{[0,1]}.
\]

We will use several times the elementary inequality

\begin{equation}\label{eq:ts}
\frac{ v^s-u^s}{s} \le \frac{v^r-u^r}{r} , \quad 0<r\le s\le\infty, \, 0\le u\le v \le 1.
\end{equation}

\begin{lemma}\label{lem:PotCont}
The mapping $p\mapsto H(p):= \bpsi_p$ defines a nonincreasing continuous function from $(0,\infty]$ into the Banach lattice $\Ct([0,1])$.
\end{lemma}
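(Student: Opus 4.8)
The plan is to treat the two assertions—monotonicity and continuity—in turn, the first being essentially immediate and the second resting on Dini's theorem together with that monotonicity. First I would check that $H$ is well defined: for $p\in(0,\infty)$ the function $\bpsi_p(t)=t^p/p$ clearly belongs to $\Ct([0,1])$, and $\bpsi_\infty=\vPsi_\infty|_{[0,1]}$ is the zero function (since $\vPsi_\infty=\infty\chi_{(1,\infty)}$), which also lies in $\Ct([0,1])$. Monotonicity is then exactly \eqref{eq:ts} with $u=0$: for $0<p\le q\le\infty$ and $t\in[0,1]$ it yields $\bpsi_q(t)=\vPsi_q(t)\le\vPsi_p(t)=\bpsi_p(t)$, i.e.\ $H(q)\le H(p)$ in the lattice $\Ct([0,1])$.

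For continuity on $(0,\infty)$ I would fix $p_0\in(0,\infty)$ and first observe the pointwise statement: for each $t\in[0,1]$ the scalar map $q\mapsto\bpsi_q(t)=t^q/q$ is continuous at $p_0$ (trivially so when $t=0$). To upgrade this to norm continuity in $\Ct([0,1])$, pick any sequence $q_n\downarrow p_0$; by monotonicity of $H$ the sequence $(\bpsi_{q_n})_n$ is nondecreasing in $\Ct([0,1])$ and converges pointwise to $\bpsi_{p_0}\in\Ct([0,1])$, so by Dini's theorem $\bpsi_{q_n}\to\bpsi_{p_0}$ uniformly. Hence, given $\varepsilon>0$, there is $N$ with $\norm{\bpsi_{q_N}-\bpsi_{p_0}}_\infty<\varepsilon$, and then for every $q\in(p_0,q_N]$ monotonicity gives $\bpsi_{q_N}\le\bpsi_q\le\bpsi_{p_0}$ pointwise, whence $\norm{\bpsi_q-\bpsi_{p_0}}_\infty\le\norm{\bpsi_{q_N}-\bpsi_{p_0}}_\infty<\varepsilon$. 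Running the same argument with a sequence $q_n\uparrow p_0$ (so $(\bpsi_{q_n})_n$ is now nonincreasing and Dini still applies) produces the matching estimate for $q$ slightly below $p_0$; combining the two one-sided bounds yields continuity of $H$ at $p_0$.

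Finally, continuity at $\infty$ is immediate from the exact computation $\norm{H(q)-H(\infty)}_\infty=\norm{\bpsi_q}_\infty=\sup_{t\in[0,1]}t^q/q=1/q\to 0$ as $q\to\infty$. The main (indeed the only) difficulty in the whole argument is precisely the passage from pointwise continuity of $q\mapsto\bpsi_q(t)$ to norm continuity in $\Ct([0,1])$, and it is there that the monotonicity of $H$ is used, through Dini's theorem. If one prefers to avoid Dini, the same step can be carried out by the direct estimate $0\le\bpsi_p(t)-\bpsi_q(t)\le(1+e^{-1})\,p^{-2}(q-p)$ for $0<p\le q<\infty$ and $t\in[0,1]$, which follows from $1-t^{q-p}\le(q-p)\abs{\ln t}$ and $\sup_{0<t\le1}t^p\abs{\ln t}=1/(ep)$ and shows in addition that $H$ is locally Lipschitz on $(0,\infty)$.
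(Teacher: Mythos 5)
Your proof is correct, but it takes a different (and longer) route than the paper's. You only use \eqref{eq:ts} with $u=0$ (to get monotonicity of $H$) and then recover norm continuity from pointwise continuity via Dini's theorem, with a separate computation at $\infty$ and an optional Lipschitz bound. The paper instead exploits the full two-parameter form of \eqref{eq:ts}: for $0<r\le s\le\infty$ it says $\bpsi_s(v)-\bpsi_s(u)\le\bpsi_r(v)-\bpsi_r(u)$, i.e.\ the difference $\bpsi_r-\bpsi_s$ is not only nonnegative but \emph{nondecreasing} on $[0,1]$, so its supremum is attained at $t=1$ and one gets the exact identity $\norm{H(r)-H(s)}_\infty=\bpsi_r(1)-\bpsi_s(1)=\tfrac1r-\tfrac1s$. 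That single line disposes of continuity (including at $\infty$) and in fact shows $H$ is an isometry onto its image for the metric $\abs{1/r-1/s}$ on $(0,\infty]$, which is sharper than your locally Lipschitz estimate $(1+e^{-1})p^{-2}(q-p)$. Your Dini argument is sound — the hypotheses (monotone sequence of continuous functions converging pointwise to a continuous limit on a compact set) are all verified, and the one-sided sandwiching $\bpsi_{q_N}\le\bpsi_q\le\bpsi_{p_0}$ correctly upgrades sequential convergence to continuity — so there is no gap; you simply did not notice that \eqref{eq:ts} already hands you the exact modulus of continuity.
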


\begin{proof}
Applying \eqref{eq:ts} with $u=0$ gives that $H$ is nonincreasing. This inequality also gives
\[
\norm{H(r)-H(s)}_\infty =\bpsi_r(1)-\bpsi_s(1)=\frac{1}{r}-\frac{1}{s}, \quad 0<r\le s\le\infty.\qedhere
\]
\end{proof}

\begin{definition}
Given $R\subseteq (0,\infty]$ nonempty, we denote by $\Kt(R)$ the the closed convex hull in $\Ct([0,1])$ of $\{\bpsi_p \colon p\in R\}$. We denote by $\Ot(R)$ the set of all Orlicz functions $F$ for which there are $0<c\le 1$ and $\varphi\in\Kt(R)$ such that
\[
F(t)\approx \varphi(t), \quad 0\le t \le c.
\]
Put
\[
\Ot((0,\infty])=\bigcup_{p>0} \Ot([p,\infty]).
\]
Given $r\in (0,\infty]$ we set
\[
\Ot(R,r^+)=\bigcap_{s>r} \Ot(R\cap [r,s]).
\]
Finally, we put $\Ot(r)=\Ot(\{r\})$ and $\Ot(r^+)=\Ot([r,\infty],r^+)$.
\end{definition}

Note that $\Ot(\infty)=\Ot(\infty^+)$ consists of all Orlicz functions that are null at a neibourhood of the origin.

\begin{example}
Let $0<r<\infty$ and $a\in\RR$. Define $F_{r,a}\colon[0,\infty)\to [0,\infty)$ by
\[
F_{r,a}(t)=\begin{cases} 0 & \mbox{ if } t=0, \\ t^r (-\log (t) )^{a} & \mbox{ if } 0 <t \le 1/e, \\ t^r& \mbox{ otherwise.} \end{cases}
\]
Suppose that $a>0$. Fix $s>r$ and $0<c<1$. If $0\le t \le c$, then
\[
\int_r^s \frac{t^p}{p} \frac{a \enpar{p-r}^{a-1}} { (s-r)^a }\, dp = C(t) t^r \enpar{- \log(t)}^a,
\]
where
\[
C(t)=\frac{a }{(s-r)^a} \int_0^{(r-s)/\log(t)} \frac{ v^{a-1} e^{-v} \, dv}{r-v \log(t)}.
\]
Since $C(t)$ is bounded away from zero and infinity, $F_{r,a} \in \Ot(r^+)$.
\end{example}

\begin{lemma}\label{lem:extension}
Let $R\subseteq (0,\infty]$ be closed, nonempty and bounded away from zero. Then, any function in $\Kt(R)$ extends to a function in $\Ot(R)$.
\end{lemma}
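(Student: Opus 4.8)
The plan is to extend an arbitrary $\varphi\in\Kt(R)$ to an Orlicz function merely by prescribing its values for $t>1$; the extension $F$ will then lie in $\Ot(R)$ almost by the very definition of $\Ot(R)$, taking $c=1$ and $\varphi$ itself as the witnessing element of $\Kt(R)$, because $F$ and $\varphi$ will coincide on all of $[0,1]$.

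First I would record the regularity enjoyed by the members of $\Kt(R)$. Each generator $\bpsi_p=\vPsi_p|_{[0,1]}$, $p\in R$, is continuous, nondecreasing, and vanishes at $0$, and these three properties are inherited by convex combinations and by uniform limits; hence every $\varphi\in\Kt(R)$ is continuous on $[0,1]$, nondecreasing, and satisfies $\varphi(0)=0$, so that $\lim_{t\to0^+}\varphi(t)=0$. Therefore, as soon as $\varphi$ is extended to $[0,\infty)$ by a nondecreasing and left-continuous rule, the only clause in the definition of an Orlicz function still in question is $F(\infty)>0$, and this is exactly what dictates the choice of extension.

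I would then distinguish two cases according to the value $\varphi(1)$. If $\varphi(1)>0$, set $F(t)=\varphi(t)$ for $0\le t\le1$ and $F(t)=\varphi(1)\,t$ for $t>1$; the two pieces agree at $t=1$ and are each nondecreasing, so $F$ is continuous and nondecreasing on $[0,\infty)$, with $\lim_{t\to0^+}F(t)=0$ and $F(\infty)=\infty>0$, i.e.\ $F$ is an Orlicz function. If $\varphi(1)=0$, then $\varphi(0)=0$ together with monotonicity forces $\varphi\equiv0$ on $[0,1]$, that is $\varphi=\bpsi_\infty$; here extending $\varphi$ by $0$ beyond $t=1$ would violate $F(\infty)>0$, so I would instead take $F=\vPhi_\infty=\infty\,\chi_{(1,\infty)}$, which is an Orlicz function and vanishes on $[0,1]$. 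In either case $F$ restricts to $\varphi$ on $[0,1]$, whence $F(t)\approx\varphi(t)$ for $0\le t\le1$ with $\varphi\in\Kt(R)$; by the definition of $\Ot(R)$ this gives $F\in\Ot(R)$, and $F$ extends $\varphi$. I do not anticipate a genuine obstacle: the only delicate point is precisely this degenerate case $\varphi\equiv0$ (which, $R$ being closed and bounded away from zero, can occur only when $\infty\in R$).
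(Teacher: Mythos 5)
Your argument is correct and is essentially the paper's own proof, which consists of the single observation that every member of $\Kt(R)$ is continuous, nondecreasing, and vanishes at $0$, leaving the routine extension to the reader. You simply make that extension explicit (linear beyond $t=1$) and correctly isolate the one degenerate case $\varphi=\bpsi_\infty$, which requires $\infty\in R$ and is handled by extending to $\vPhi_\infty$.
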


\begin{proof}
Just notice that any function $F\in\Kt(R)$ is nondecreasing, continuous, and satisfies $F(0)=0$.
\end{proof}

Given $f\colon[0,1] \to \RR$ and $p\in (0,\infty]$ we consider the function
\[
\pi_p(f)\colon[0,1] \to \RR, \quad \pi_p(f)(t)=f\enpar{t^p}.
\]

\begin{definition}
Let $ 0<r\le s\le \infty$. We denote by $\Kt_{r,s}$ the set consisting of all nondecreasing functions $\varphi\colon[0,1]\to[0,\infty)$ such that $\pi_r(\varphi)$ is convex, $\pi_s(\varphi)$ is concave,
\[
\frac{t^s-u^s}{s} \le \varphi(t)-\varphi(u)\le \frac{t^r-u^r}{r},
\quad 0\le u <t \le 1,
\]
and
\[
\varphi(bt) \le b^s \varphi(t), \quad 1<b<\infty, \, 0\le t \le \frac{1}{b}.
\]
\end{definition}

\begin{lemma}\label{lem:lemA}
Let $R\subseteq (0,\infty]$ be closed, nonempty and bounded away from zero. Set $r=\min(R)$ and $s=\max(R)$. Then $\Kt(R) \subseteq\Kt_{r,s}$.
\end{lemma}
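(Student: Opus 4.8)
The plan is to verify that every generator $\bpsi_p$, $p\in R$, satisfies each of the conditions defining membership in $\Kt_{r,s}$, and then to observe that the set of functions in $\Ct([0,1])$ meeting all those conditions is convex and closed. Since $\Kt(R)$ is, by definition, the closed convex hull in $\Ct([0,1])$ of $\{\bpsi_p\colon p\in R\}$, this immediately yields $\Kt(R)\subseteq\Kt_{r,s}$.

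First I would check the generators. Fix $p\in R$, so that $r\le p\le s$. The function $\bpsi_p=\vPsi_p|_{[0,1]}$ is nondecreasing and $[0,\infty)$-valued. For the two-sided estimate, with $\varphi=\bpsi_p$ one has, for $0\le u<t\le1$,
\[
\frac{t^s-u^s}{s}\le\bpsi_p(t)-\bpsi_p(u)=\frac{t^p-u^p}{p}\le\frac{t^r-u^r}{r},
\]
which is precisely inequality~\eqref{eq:ts}. For the homogeneity-type bound, if $b>1$ and $0\le t\le 1/b$ then $\bpsi_p(bt)=b^p\bpsi_p(t)\le b^s\bpsi_p(t)$, since $p\le s$ and $\bpsi_p\ge0$. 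Finally, $\pi_r(\bpsi_p)$ is convex and $\pi_s(\bpsi_p)$ is concave: this expresses that $\vPsi_p$ is $r$-convex and $s$-concave, an elementary feature of the power function that holds precisely because $r\le p\le s$ (and was already used for $M_{\pp}$).

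Next I would check stability. Each of the conditions above is preserved under convex combinations and under uniform limits in $\Ct([0,1])$. Monotonicity and nonnegativity are obviously stable under both. In the two-sided estimate the map $\varphi\mapsto\varphi(t)-\varphi(u)$ is affine and the bounds $(t^s-u^s)/s$ and $(t^r-u^r)/r$ do not involve $\varphi$; likewise $\varphi(bt)\le b^s\varphi(t)$ is affine in $\varphi$; so both inequalities pass to convex combinations and, being closed conditions, to pointwise, hence uniform, limits. Convexity of $\pi_r(\varphi)$ and concavity of $\pi_s(\varphi)$ survive convex combinations because $\pi_r$ and $\pi_s$ are linear and convexity and concavity are stable under convex combinations, and they survive pointwise limits because convexity and concavity are closed conditions. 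Therefore the set of $\varphi\in\Ct([0,1])$ satisfying all the defining conditions of $\Kt_{r,s}$ is a closed convex subset of $\Ct([0,1])$ containing each $\bpsi_p$ with $p\in R$; it thus contains the closed convex hull $\Kt(R)$. Since that set is $\Kt_{r,s}$ itself, the inclusion follows.

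I do not anticipate a genuine obstacle; the points that need a little care are the degenerate and boundary cases — $p=\infty$, where $\bpsi_\infty\equiv0$ on $[0,1]$; $r=s$, where the convexity and concavity requirements together force $\pi_r(\varphi)$ to be affine; and the exact transcription of the $r$-convexity and $s$-concavity of $\vPsi_p$ through the substitutions defining $\pi_r$ and $\pi_s$ — all of which are routine.
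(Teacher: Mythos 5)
Your proof is correct and follows essentially the same route as the paper's: check that each generator $\bpsi_p$, $p\in R$, lies in $\Kt_{r,s}$ (the two-sided estimate being exactly inequality~\eqref{eq:ts}, the remaining conditions being elementary properties of power functions), then conclude from the fact that $\Kt_{r,s}$ is convex and closed. The paper compresses this into two lines; you have simply written out the routine verifications.
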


\begin{proof}
We infer from inequality~\eqref{eq:ts} that $\bpsi_p\in \Kt_{r,s}$ for all $p\in R$. Since $\Kt_{r,s}$ is convex and closed, we are done.
\end{proof}

\begin{lemma}\label{lem:singleempty}
For $j=1$, $2$ let $R_j\subseteq (0,\infty]$ be closed, nonempty and bounded away from zero.
Set $r_1:=\max(R_1)$ and $r_2:=\min(R_2)$.
\begin{enumerate}[label=(\roman*),leftmargin=*]
\item\label{it:SE:a} If $r_1=r_2$, then $\Ot(R_1)\cap \Ot(R_2) =\Ot(r_1)$.
\item If $r_1<r_2$, then $\Ot(R_1)\cap \Ot(R_2) =\emptyset$.
\end{enumerate}
\end{lemma}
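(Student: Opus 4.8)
The plan is to reduce everything to one order‑theoretic fact about the sets $\Kt(R)$, and then to a comparison of power‑function growth rates at the origin. First I would record the following sandwich: if $R\subseteq(0,\infty]$ is closed, nonempty and bounded away from zero, with $r=\min(R)$ and $s=\max(R)$ (both attained, $R$ being compact), then
\[
\bpsi_s\le\varphi\le\bpsi_r\quad\text{in }\Ct([0,1])\qquad\text{for all }\varphi\in\Kt(R).
\]
This follows at once from Lemma~\ref{lem:PotCont}: since $p\mapsto\bpsi_p$ is nonincreasing we have $\bpsi_s\le\bpsi_p\le\bpsi_r$ for every $p\in R$, and the order interval $\{g\in\Ct([0,1])\colon\bpsi_s\le g\le\bpsi_r\}$ is closed and convex, hence contains the closed convex hull $\Kt(R)$ of $\{\bpsi_p\colon p\in R\}$.

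For part~\ref{it:SE:a}, write $r:=r_1=r_2$. For the inclusion $\supseteq$, I note that $r=\max(R_1)\in R_1$ and $r=\min(R_2)\in R_2$, so $\bpsi_r$ is a generator of both $\Kt(R_1)$ and $\Kt(R_2)$; since $\Kt(\{r\})=\{\bpsi_r\}$, every Orlicz function comparable to $\bpsi_r$ near the origin lies in $\Ot(R_1)\cap\Ot(R_2)$, i.e. $\Ot(r)\subseteq\Ot(R_1)\cap\Ot(R_2)$. For the inclusion $\subseteq$, I would take $F\in\Ot(R_1)\cap\Ot(R_2)$, choose $c\in(0,1]$ and $\varphi_j\in\Kt(R_j)$ with $F\approx\varphi_1\approx\varphi_2$ on $[0,c]$, so that $\varphi_1\le B\varphi_2$ on $[0,c]$ for some $B\in(0,\infty)$, and then invoke the sandwich: $\bpsi_r=\bpsi_{\max(R_1)}\le\varphi_1$ and $\varphi_2\le\bpsi_{\min(R_2)}=\bpsi_r$, whence $\bpsi_r\le\varphi_1\le B\varphi_2\le B\bpsi_r$ on $[0,c]$. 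Thus $\varphi_1\approx\bpsi_r$, so $F\approx\bpsi_r$ on $[0,c]$, i.e. $F\in\Ot(\{r\})=\Ot(r)$.

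For part~(ii), I would argue by contradiction. If $F\in\Ot(R_1)\cap\Ot(R_2)$, the same extraction yields $c\in(0,1]$, $\varphi_j\in\Kt(R_j)$ and $B\in(0,\infty)$ with
\[
\bpsi_{r_1}=\bpsi_{\max(R_1)}\le\varphi_1\le B\varphi_2\le B\bpsi_{\min(R_2)}=B\bpsi_{r_2}\quad\text{on }[0,c].
\]
Now $r_1=\max(R_1)<r_2$ forces $r_1<\infty$, so for $t\in(0,c]$ this reads $t^{r_1}/r_1\le B\,t^{r_2}/r_2$ if $r_2<\infty$, and $t^{r_1}/r_1\le0$ if $r_2=\infty$; both fail as $t\to0^+$ since $r_1<r_2$. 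Hence $\Ot(R_1)\cap\Ot(R_2)=\emptyset$.

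The only step needing care is the bookkeeping at the endpoint $p=\infty$, where $\bpsi_\infty\equiv0$ and $t^\infty=0$ on $[0,1]$, and where the relation $F\approx\varphi$ with $\varphi\equiv0$ just means $F\equiv0$ on the interval in question; with these conventions the displays above remain valid (for instance, when $r=\infty$ in part~\ref{it:SE:a} the chain $\bpsi_r\le\varphi_1\le B\bpsi_r$ collapses to $\varphi_1\equiv0$ near $0$, hence $F\equiv0$ near $0$, i.e. $F\in\Ot(\infty)=\Ot(r)$, while $\Ot(\infty)\subseteq\Ot(R_1)$ because $\infty\in R_1$ gives $\bpsi_\infty=0\in\Kt(R_1)$). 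Beyond this case distinction I foresee no real obstacle: the whole argument is the observation that an element of $\Kt(R)$ grows near $0$ at least like $t^{\max(R)}$ and at most like $t^{\min(R)}$, together with the fact that comparable functions have the same such exponent.
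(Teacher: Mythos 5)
Your proof is correct and follows essentially the same route as the paper's: the sandwich $\bpsi_{\max(R)}\le\varphi\le\bpsi_{\min(R)}$ for $\varphi\in\Kt(R)$ is exactly what the paper extracts from Lemma~\ref{lem:lemA}, and the conclusion is the same comparison of $t^{r_1}$ against $t^{r_2}$ as $t\to0^+$. You are merely more explicit about the inclusion $\Ot(r_1)\subseteq\Ot(R_1)\cap\Ot(R_2)$ in part~(i) and about the endpoint $p=\infty$, both of which the paper leaves implicit.
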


\begin{proof}
Pick $F\in \Ot(R_1)\cap \Ot(R_2)$ and $r_1\le r \le s\le r_2$. There are $0<c\le 1$, $\varphi_j\in\Ot(R_j)$ and $C_j\in(0,\infty)$, $j=1$, $2$, such that
\[
\frac{\varphi_1(t)}{C_1} \le F(t)\le C_2 \varphi_2(t), \quad 0\le t \le c.
\]
By Lemma~\ref{lem:lemA},
\[
\frac{\bpsi_r(t)}{C_1} \le F(t)\le C_2 \bpsi_s(t) , \quad 0\le t \le c.
\]
Consequently, $r\ge s$.
\end{proof}

\begin{lemma}\label{lem:tellapart}
Let $0<q<r\le \infty$. Then, $\Ot(q^+)\cap\Ot(r^+)=\emptyset$.
\end{lemma}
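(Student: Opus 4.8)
The plan is to argue by contradiction and split according to whether $r$ is finite. So suppose $F\in\Ot(q^+)\cap\Ot(r^+)$.

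First I would treat the case $r<\infty$. Here I would unwind the definition $\Ot(q^+)=\Ot([q,\infty],q^+)=\bigcap_{s>q}\Ot([q,s])$ to get, fixing any $s\in(q,r)$, that $F\in\Ot([q,s])$; likewise $\Ot(r^+)=\bigcap_{t>r}\Ot([r,t])$ gives $F\in\Ot([r,r+1])$. The sets $R_1:=[q,s]$ and $R_2:=[r,r+1]$ are closed, nonempty and bounded away from zero, with $\max(R_1)=s<r=\min(R_2)$, so Lemma~\ref{lem:singleempty}\,(ii) yields $\Ot(R_1)\cap\Ot(R_2)=\emptyset$ --- contradicting $F\in\Ot(R_1)\cap\Ot(R_2)$.

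For $r=\infty$ I would instead use the explicit description recorded after the definition, namely that $\Ot(r^+)=\Ot(\infty)$ is exactly the class of Orlicz functions vanishing on a neighbourhood of $0$; it then suffices to show that no $F\in\Ot(q^+)$ can vanish near $0$. Since $F\in\Ot(q^+)\subseteq\Ot([q,q+1])$, there are $c\in(0,1]$ and $\varphi\in\Kt([q,q+1])$ with $F(t)\approx\varphi(t)$ on $[0,c]$. By Lemma~\ref{lem:lemA} we have $\varphi\in\Kt_{q,q+1}$, and since $\varphi(0)=0$ (inherited from the generators $\bpsi_p$, which vanish at $0$, through convex combinations and sup-norm limits), the lower estimate in the definition of $\Kt_{q,q+1}$, taken with $u=0$, gives $\varphi(t)\ge t^{q+1}/(q+1)>0$ for $t\in(0,1]$. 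Hence $F(t)>0$ for all small $t>0$, so $F\notin\Ot(\infty)$, a contradiction.

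The step I expect to require the most care is the bookkeeping with the nested-intersection definition of $\Ot(\cdot\,,r^+)$: one must check that $\Ot(q^+)$ sits inside $\Ot([q,s])$ precisely for $s>q$ and that the analogous inclusion holds at $r$, and one must handle the endpoint $r=\infty$ separately since there the index set $\{s>r\}$ is empty and one has to invoke the stated characterisation of $\Ot(\infty)$ rather than the formula. Once these reductions are made, Lemma~\ref{lem:singleempty} does all the work in the finite case and the one-line comparison $\bpsi_p\ge\bpsi_{q+1}$ on $[0,1]$ handles the infinite one.
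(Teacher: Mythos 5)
Your proof is correct and, in the case $r<\infty$, is exactly the paper's argument: reduce $\Ot(q^+)$ and $\Ot(r^+)$ to $\Ot([q,s])$ and $\Ot([r,r+1])$ and invoke Lemma~\ref{lem:singleempty}(ii). Your separate treatment of $r=\infty$ is a genuine (if small) improvement in rigour: the paper's proof literally says ``choose $q<s_1<r<s_2$,'' which is vacuous when $r=\infty$; it can be repaired by taking $R_2=\{\infty\}$ in Lemma~\ref{lem:singleempty}, but your direct argument --- using the lower bound $\varphi(t)\ge t^{q+1}/(q+1)$ from $\Kt_{q,q+1}$ to show no member of $\Ot(q^+)$ vanishes near the origin --- is equally clean and makes the endpoint case explicit.
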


\begin{proof}
Choose $q<s_1<r<s_2$. Since $\Ot(q^+)\cap\Ot(r^+)\subseteq\Ot([q,s_1]) \cap \Ot([r,s_2])$, the result follows from Lemma~\ref{lem:singleempty}.
\end{proof}

\begin{lemma}\label{lem:compact}
Let $R\subseteq (0,\infty]$ be closed, nonempty and bounded away from zero.
\begin{enumerate}[label=(\roman*), leftmargin=*, widest=ii]
\item $\varphi\in\Kt(R)$ if and only if there is a probability measure $\mu$ over $R$ such that $\varphi=\bpsi_\mu$.
\item $\Kt(R)$ is a compact set.
\end{enumerate}
\end{lemma}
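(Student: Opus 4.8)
The plan is to identify $\Kt(R)$ with the image of the space $\Pt(R)$ of Borel probability measures on the compact set $R\subseteq(0,\infty]$ under the affine map $\mu\mapsto\bpsi_\mu$, and then invoke the standard weak-$*$ compactness of $\Pt(R)$. Note first that $R$ being closed, nonempty, and bounded away from zero, it is a compact subset of $(0,\infty]$, so $\Pt(R)$ endowed with the weak-$*$ topology is compact and metrizable. The key auxiliary fact is that the evaluation map
\[
\Phi\colon \Pt(R)\to \Ct([0,1]),\quad \mu\mapsto\bpsi_\mu,
\]
is continuous when $\Pt(R)$ carries the weak-$*$ topology and $\Ct([0,1])$ the norm topology. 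This follows from Lemma~\ref{lem:PotCont}: the map $p\mapsto H(p)=\bpsi_p$ is norm-continuous from $R$ into $\Ct([0,1])$, so for each $\mu\in\Pt(R)$ we may write $\Phi(\mu)=\int_R H(p)\,d\mu(p)$ as a Bochner integral of a continuous bounded $\Ct([0,1])$-valued function, and weak-$*$ convergence $\mu_n\to\mu$ forces $\int_R H\,d\mu_n\to\int_R H\,d\mu$ in norm (approximate $H$ uniformly by $\Ct([0,1])$-valued simple functions built from a finite partition of the compact metric space $R$ into small pieces, using uniform continuity of $H$; on each piece weak-$*$ convergence handles the scalar masses).

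For part~(i), one inclusion is immediate: if $\mu\in\Pt(R)$ then $\bpsi_\mu$ is a norm-limit of convex combinations $\sum_i \mu(B_i)\bpsi_{p_i}$ (Riemann-type sums as above), each of which lies in the convex hull of $\{\bpsi_p:p\in R\}$, so $\bpsi_\mu\in\Kt(R)$. For the reverse inclusion, observe that $\Phi(\Pt(R))$ contains every $\bpsi_p$ with $p\in R$ (take $\mu=\delta_p$), is convex since $\Phi$ is affine and $\Pt(R)$ is convex, and is norm-closed since it is the continuous image of the compact set $\Pt(R)$; hence it contains the closed convex hull $\Kt(R)$ of $\{\bpsi_p:p\in R\}$. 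This proves $\Kt(R)=\Phi(\Pt(R))$, which is exactly statement~(i), and part~(ii) is then immediate: $\Kt(R)=\Phi(\Pt(R))$ is the continuous image of a compact set, hence compact.

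The main obstacle is the continuity of $\Phi$ for the weak-$*$/norm pairing — a priori weak-$*$ convergence of measures only gives pointwise (in $t$) convergence of $\bpsi_{\mu_n}(t)\to\bpsi_{\mu}(t)$, not norm convergence. The resolution is precisely Lemma~\ref{lem:PotCont}: the Lipschitz-type estimate $\norm{H(r)-H(s)}_\infty=\tfrac1r-\tfrac1s$ makes $H$ uniformly continuous on the compact set $R$, which upgrades the pointwise convergence to uniform convergence via the partition argument. One should also be a little careful about the endpoint $p=\infty\in R$, where $\bpsi_\infty=\infty\chi_{(1,\infty)}$ restricts on $[0,1]$ to the function that is $0$ on $[0,1)$ and $0$ at $t=1$ as well under the convention $1^\infty=0$; Lemma~\ref{lem:PotCont} already incorporates this point into the continuity statement, so no separate treatment is needed.
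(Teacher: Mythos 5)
Your proposal is correct and rests on the same core identification as the paper: $\Kt(R)=\{\bpsi_\mu\colon\mu\in\Pt(R)\}$, obtained from the weak* compactness of $\Pt(R)$ together with the continuity of $p\mapsto\bpsi_p$ supplied by Lemma~\ref{lem:PotCont}. The one genuine divergence is in how the two halves of (i) are closed: the paper views $\mu\mapsto\bpsi_\mu$ as the restriction to $\Pt(R)\subseteq\Ct(R)^*$ of a linear map that is jointly continuous in $(t,\mu)$, gets compactness of the image from Banach--Alaoglu, and then invokes the Krein--Milman theorem to see that the convex hull of $\{\bpsi_p\colon p\in R\}$ is dense in that image; you instead prove the density directly by a Riemann-sum approximation $\norm{\bpsi_\mu-\sum_i\mu(B_i)\bpsi_{p_i}}_\infty\le\varepsilon$, uniform in $\mu$, which is more elementary (no Krein--Milman) and simultaneously yields total boundedness of the image. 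Both routes are sound. One small imprecision in yours: in the continuity argument you assert that weak* convergence $\mu_n\to\mu$ ``handles the scalar masses'' $\mu_n(B_i)\to\mu(B_i)$, but weak* convergence only gives this for $\mu$-continuity sets; either arrange the partition boundaries to be $\mu$-null, or argue as the paper implicitly does, namely that a bounded weak*-convergent net converges uniformly on the norm-compact set $\{G(t)\colon t\in[0,1]\}\subseteq\Ct(R)$, which gives $\sup_t\abs{\bpsi_{\mu_n}(t)-\bpsi_\mu(t)}\to0$ without ever evaluating measures of Borel sets. This is a routine repair and does not affect the validity of the argument.
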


\begin{proof}
Let identify the space $\Mt(R)$ of all signed measures over $R$ with the dual space of $\Ct(R)$. The set $\Pt(R)$ of all probability measures over $R$ is a convex subspace of $\Mt(R)$. Besides, $\Pt(R)$ is closed relative to the weak* topology, and
\[
\Dt(R)=\{\delta_p\colon p\in R\}
\]
is the set of extreme points of $\Pt(R)$.

By \eqref{eq:ts}, the map
\[
G\colon [0,1]\to \Ct(R), \quad G(t)(p)= \bpsi_p(t)
\]
is continuous. Consequently, the map
\[
D\colon [0,1] \times \Mt(R), \quad (t,\mu)\mapsto \int_R \bpsi_p(t) \, d\mu(p)
\]
is continuous. We infer that the map
\[
E\colon \Mt(R) \to \Ct([0,1]) , \quad \mu \mapsto D(\mu,\cdot)=\bpsi_\mu,
\]
besides linear, is continuous. Note that $E(\delta_p)=\bpsi_p$ for all $p\in R$ and that
\[
E(\Pt(R))=\Lt(R):=\enbrace{\bpsi_\mu \colon \mu\in\Pt(R)}.
\]
Therefore, $\Lt(R)$ is compact by the Banach--Alaoglu Theorem, and the convex hull of $\{\bpsi_p\colon p\in R\}$ is dense in $\Lt(R)$ by the Krein--Milman Theorem. Consequently, $\Lt(R)=\Kt(R)$.
\end{proof}

\begin{lemma}\label{lem:minsupp}
Let $R\subseteq(0,\infty]$ be closed, nonempty, and bounded away from zero. Let $\mu\in\Pt(R)$ and $F$ be an Orlicz function with $F\approx \bpsi_\mu$.
If $r=\min(\supp(\mu))$, then $F\in\Ot(R,r^+)$.
\end{lemma}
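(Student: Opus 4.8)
The plan is to fix $s>r$ and prove $F\in\Ot(R\cap[r,s])$; as $s$ is arbitrary this gives $F\in\bigcap_{s>r}\Ot(R\cap[r,s])=\Ot(R,r^+)$. The case $r=\infty$ is degenerate (then $\mu=\delta_\infty$, $\bpsi_\mu\equiv 0$ on $[0,1]$, so $F$ vanishes near $0$), so assume $r<\infty$; and since $R\cap[r,s']\subseteq R\cap[r,s]$ forces $\Kt(R\cap[r,s'])\subseteq\Kt(R\cap[r,s])$ and hence $\Ot(R\cap[r,s'])\subseteq\Ot(R\cap[r,s])$ whenever $r\le s'\le s$, we may also assume $s<\infty$. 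Write $\mu=\mu_1+\mu_2$, where $\mu_1$ is the restriction of $\mu$ to $[r,s]$ and $\mu_2$ its restriction to $(s,\infty]$. Since $r=\min(\supp(\mu))$ lies in $[r,s]$ and $s>r$, the total mass $\alpha:=\mu_1(R)=\mu([r,s])$ is positive; set $\nu:=\alpha^{-1}\mu_1$, a probability measure carried by the closed, nonempty, bounded-away-from-zero set $R\cap[r,s]$, so that $\bpsi_\nu\in\Kt(R\cap[r,s])$ by Lemma~\ref{lem:compact}. By additivity of $\mu\mapsto\bpsi_\mu$ one has $\bpsi_\mu=\alpha\bpsi_\nu+\bpsi_{\mu_2}$ on $[0,1]$.

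The crux is to show that $\bpsi_{\mu_2}$ is absorbed by $\alpha\bpsi_\nu$ near the origin. For the upper bound, $t^p\le t^s$ when $p\ge s$ and $0\le t\le 1$, so $\bpsi_{\mu_2}(t)\le\vPsi_s(t)=t^s/s$. For the lower bound, fix a finite $q\in(r,s)$; since $r\in\supp(\mu)$, the mass $m:=\mu([r,q])$ is positive, and for $0\le t\le 1$,
\[
\alpha\bpsi_\nu(t)=\int_{[r,s]}\frac{t^p}{p}\,d\mu(p)\ge\int_{[r,q]}\frac{t^p}{p}\,d\mu(p)\ge\frac{m}{q}\,t^{q}.
\]
Consequently $\bpsi_{\mu_2}(t)\le\dfrac{q}{m\,s}\,t^{\,s-q}\,\alpha\bpsi_\nu(t)$ for $0<t\le 1$, and since $s-q>0$ there is $c\in(0,1]$ with $\bpsi_{\mu_2}(t)\le\alpha\bpsi_\nu(t)$ on $[0,c]$, whence
\[
\alpha\bpsi_\nu(t)\le\bpsi_\mu(t)\le 2\,\alpha\bpsi_\nu(t),\qquad 0\le t\le c.
\]

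Combining the last display with $F\approx\bpsi_\mu$ yields $F(t)\approx\bpsi_\nu(t)$ for $0\le t\le c$, and since $\bpsi_\nu\in\Kt(R\cap[r,s])$ and $0<c\le 1$ this is precisely the statement that $F\in\Ot(R\cap[r,s])$; as $s$ was an arbitrary element of $(r,\infty)$, the proof is complete. The only genuinely substantive point is the lower bound $\alpha\bpsi_\nu(t)\gtrsim t^{q}$: this is where the hypothesis $r=\min(\supp(\mu))$ — as opposed to merely $r=\min(R)$ — is indispensable, since it guarantees that the truncated measure $\mu_1$ still carries mass arbitrarily close to the exponent $r$, so that $\bpsi_{\mu_1}=\alpha\bpsi_\nu$ governs the growth of $\bpsi_\mu$ at $0$ and dominates the higher-exponent tail $\bpsi_{\mu_2}$. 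Everything else — the measure-restriction bookkeeping, the appeal to Lemma~\ref{lem:compact}, and the reduction to finite $s$ — is routine.
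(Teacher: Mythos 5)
Your proof is correct and follows essentially the same route as the paper: fix $s>r$, restrict $\mu$ to $[r,s]$, normalize to a probability measure $\nu$ on $R\cap[r,s]$ (so $\bpsi_\nu\in\Kt(R\cap[r,s])$ by Lemma~\ref{lem:compact}), and show the tail $\bpsi_{\mu_2}$ is negligible. The only difference is cosmetic: the paper gets the two-sided bound $\bpsi_\mu\le\bpsi_\nu\le\lambda^{-1}\bpsi_\mu$ on all of $(0,1]$ directly from the monotonicity of $p\mapsto\bpsi_p$ (Lemma~\ref{lem:PotCont}), whereas you absorb the tail only near $0$ via the intermediate exponent $q$, which suffices.
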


\begin{proof}
Fix $s>r$. We have
\[
\lambda=\mu([r,s)) >0.
\]
Let $\nu$ be the probability measure obtained by restricting $\mu/\lambda$ to $[r,s)$. Let $0<t\le 1$. By Lemma~\ref{lem:PotCont},
\[
\bpsi_\mu(t)\le \bpsi_\nu(t) \le \frac{1}{\lambda} \bpsi_\mu(t).
\]
Since $\nu\in\Pt([R\cap[r,s])$, we are done.
\end{proof}

\begin{corollary}\label{cor:doubling}
Let $R\subseteq(0,\infty]$ be closed, nonempty, and bounded away from zero. Let $F\in\Ot(R)$ and $b\in(0,\infty)$. Then, there is $c\in(0,\infty)$ such that
$F(bt)\approx F(t)$ for $0\le t \le c$.
\end{corollary}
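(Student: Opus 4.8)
The plan is to use Lemma~\ref{lem:minsupp} to reduce to a \emph{bounded} exponent set, where the scaling property built into the class $\Kt_{r,s}$ gives the result at once. By definition of $\Ot(R)$ there are $c_0\in(0,1]$ and $\varphi\in\Kt(R)$ with $F(t)\approx\varphi(t)$ on $[0,c_0]$, and by Lemma~\ref{lem:compact}\,(i) we may take $\varphi=\bpsi_\mu$ with $\mu\in\Pt(R)$. Write $r_0=\min(\supp\mu)$. If $r_0=\infty$ then $\supp\mu=\{\infty\}$, i.e.\ $\mu=\delta_\infty$, so $\varphi=\bpsi_\infty$ vanishes on $[0,1]$; hence $F$ vanishes on $[0,\min\{c_0,1\}]$, and on a suitable interval $[0,c]$ both $F(t)$ and $F(bt)$ vanish, so the conclusion is trivial. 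Assume therefore $r_0<\infty$.

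By Lemma~\ref{lem:minsupp}, $F\in\Ot(R,r_0^{+})$, so fixing any finite $s_0>r_0$ we get $F\in\Ot(R\cap[r_0,s_0])$: there are $c_1\in(0,1]$ and $\psi\in\Kt(R\cap[r_0,s_0])$ with $F(t)\approx\psi(t)$ on $[0,c_1]$. Since $R':=R\cap[r_0,s_0]$ is closed, nonempty (it contains $r_0\in\supp\mu\subseteq R$) and bounded away from zero, with $\min R'=r_0$ and $s_1:=\max R'\le s_0<\infty$, Lemma~\ref{lem:lemA} gives $\psi\in\Kt_{r_0,s_1}$. In particular $\psi$ is nondecreasing and satisfies
\[
\psi(\beta t)\le\beta^{s_1}\psi(t),\qquad 1<\beta<\infty,\ 0\le t\le 1/\beta .
\]

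Now I claim $\psi(bt)\approx\psi(t)$ for $0\le t\le 1/\max\{1,b\}$. For $b=1$ this is trivial; for $b>1$ and $0\le t\le 1/b$, monotonicity gives $\psi(t)\le\psi(bt)$ while the displayed inequality gives $\psi(bt)\le b^{s_1}\psi(t)$; for $b<1$ and $0\le t\le 1$, monotonicity gives $\psi(bt)\le\psi(t)$ while the displayed inequality with $\beta=1/b$ applied to $bt$ (legitimate since $bt\le b=1/\beta$) gives $\psi(t)\le b^{-s_1}\psi(bt)$. Finally set $c=c_1/\max\{1,b\}$ (recall $c_1\le1$): for $0\le t\le c$ one has $t\le c_1$, $bt\le c_1$ and $t\le 1/\max\{1,b\}$, whence $F(bt)\approx\psi(bt)\approx\psi(t)\approx F(t)$. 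The only point requiring care is this last bookkeeping, ensuring all three comparisons hold simultaneously after shrinking to $[0,c]$; the genuine obstacle — that $R$ may be unbounded (i.e.\ $\max R=\infty$), so that the exponent $s$ in $\Kt_{r,s}$ could be $\infty$ and the scaling inequality vacuous — is precisely what Lemma~\ref{lem:minsupp} removes by letting us pass to the bounded set $R\cap[r_0,s_0]$.
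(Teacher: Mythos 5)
Your proof is correct and follows essentially the same route as the paper's: split off the trivial case where $F$ vanishes near the origin (the paper phrases this as $F\in\Ot(\infty)$), use Lemma~\ref{lem:minsupp} to place $F$ in $\Ot([r,s])$ with $s<\infty$, and then invoke the scaling inequality of $\Kt_{r,s}$ from Lemma~\ref{lem:lemA}. You merely spell out the monotonicity/scaling bookkeeping for $b>1$ and $b<1$ that the paper leaves implicit.
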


\begin{proof}
If $F\in\Ot(\infty)$ the result is clear. Otherwise, there are $0<r<s<\infty$ such that $F\in\Ot([r,s])$. Applying Lemma~\ref{lem:lemA} puts an end to the proof.
\end{proof}

\begin{lemma}\label{lem:NoPotencial}
Suppose that a decreasing sequence $(r_n)_{n=1}^\infty$ in $(0,\infty)$ converges to $r>0$. Set $R=\{r\}\cup\enbrace{r_n \colon n\in \NN}$. Let $(b_n)_{n=1}^\infty$ in $(0,\infty)$ be such that $\sum_{n=1}^\infty b_n=1$. Define $F\colon [0,\infty)\to [0,\infty]$ by
\[
F(t)=\sum_{n=1}^\infty t^{r_n} b_n.
\]
Then, $F\in \Ot(R,r^+)\setminus \Ot(r)$.
\end{lemma}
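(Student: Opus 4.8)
The plan is to represent $F$, up to a multiplicative constant, as $\bpsi_\mu$ for an explicit probability measure $\mu$ on $R$ with $\min(\supp\mu)=r$, so that $F\in\Ot(R,r^+)$ follows directly from Lemma~\ref{lem:minsupp}; the non-membership $F\notin\Ot(r)$ will then come from the elementary fact that $F(t)/t^r\to 0$ as $t\to 0^+$.

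First I would check that $\lambda:=\sum_{n=1}^\infty b_n r_n$ is finite and positive: since $(r_n)_{n=1}^\infty$ decreases to $r$ we have $r<r_n\le r_1$ for every $n$, hence $r<\lambda\le r_1<\infty$. Setting $c_n=b_n r_n/\lambda$ produces positive weights with $\sum_n c_n=1$, so $\mu:=\sum_{n=1}^\infty c_n\delta_{r_n}$ is a probability measure on $R$, which is closed, nonempty and bounded away from zero. Because $\bpsi_{r_n}(t)=t^{r_n}/r_n$, summing termwise --- legitimate on $[0,1]$ by the Weierstrass $M$-test, which also shows that $F$ is continuous and hence an Orlicz function, being nondecreasing, vanishing at the origin, with $F(1)=1$ --- yields $\bpsi_\mu(t)=\sum_n c_n t^{r_n}/r_n=F(t)/\lambda$ for $t\in[0,1]$; in particular $F\approx\bpsi_\mu$, and $\bpsi_\mu\in\Kt(R)$ by Lemma~\ref{lem:compact}. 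Since each $c_n>0$, the support of $\mu$ contains every $r_n$ and therefore the closure $\{r\}\cup\{r_n:n\in\NN\}=R$; since also $\supp(\mu)\subseteq R$, we conclude $\supp(\mu)=R$, and since $r_n>r$ this gives $\min(\supp\mu)=r$. Lemma~\ref{lem:minsupp} applied to $\mu$ and $F$ now yields $F\in\Ot(R,r^+)$.

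For the second assertion I would argue by contradiction. If $F\in\Ot(r)=\Ot(\{r\})$, then, since $\Kt(\{r\})$ is the singleton $\{\bpsi_r\}$, there are $c\in(0,1]$ and $A\in(0,\infty)$ with $A^{-1}t^r\le F(t)\le At^r$ for $0\le t\le c$; dividing by $t^r$ gives $F(t)/t^r=\sum_n b_n t^{r_n-r}\ge A^{-1}$ for all $t\in(0,c]$. On the other hand each summand $b_n t^{r_n-r}$ is bounded by $b_n$ when $t\le 1$ and tends to $0$ as $t\to 0^+$, because $r_n-r>0$; hence by dominated convergence $\sum_n b_n t^{r_n-r}\to 0$ as $t\to 0^+$, contradicting the lower bound. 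Therefore $F\notin\Ot(r)$, and the proof is complete.

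The only step that requires genuine attention is the identification $\min(\supp\mu)=r$ and the ensuing application of Lemma~\ref{lem:minsupp}: one must be sure that the accumulation point $r$ really lies in $\supp(\mu)$ and is its smallest element, which is precisely where the hypothesis that the $r_n$ decrease to (but never reach) $r$, so that $r_n>r$ for all $n$, enters. The remaining ingredients --- finiteness of $\lambda$, the termwise series identity, and the limit $t^{r_n-r}\to 0$ --- are routine.
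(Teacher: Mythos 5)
Your proof is correct and follows essentially the same route as the paper: represent $F$ (up to a constant) as $\bpsi_\mu$ for a probability measure supported on $\{r_n\colon n\in\NN\}$ with $\min(\supp\mu)=r$, invoke Lemma~\ref{lem:minsupp} for membership in $\Ot(R,r^+)$, and use dominated convergence to get $F(t)/t^r\to 0$, hence $F\notin\Ot(r)$. The only difference is that you make explicit the construction of the measure $\mu$ and the normalization $\lambda=\sum_n b_n r_n$, which the paper leaves implicit.
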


\begin{proof}
By Lemma~\ref{lem:minsupp} $F\in\Ot(p^+)$. By the dominated convergence theorem,
\[
\lim_{t\to 0^+} \frac{F(t)}{t^p}=0.
\]
Hence, $F\notin \Ot(p)$.
\end{proof}

Recall that $r\in[0,\infty]$ is an accumulation point of a set $R\subseteq[0,\infty]$ if $R\cap (V\setminus\{r\})\not=\emptyset$ for every neighbourhood $V$ of $r$. If $R\cap(r,s)\not=\emptyset$ for all $s>r$, we say that $r$ is a right-side accumulation point of $R$.

\begin{proposition}\label{prop:lefttendence}
Let $R\subseteq (0,\infty]$ be closed, nonempty, and bounded away from zero. Given $r\in R$, $\Ot(R,r^+)\setminus\Ot(r)$ is nonempty if and only if $r$ is a right-side accumulation point of $R$.
\end{proposition}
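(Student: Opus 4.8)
The plan is to prove the two implications separately; thanks to the machinery already in place, each is short. We may assume $r<\infty$ from the outset: if $r=\infty$, then $r$ is not a right-side accumulation point of $R$ and $\Ot(R,r^+)$ reduces to $\Ot(\{r\})=\Ot(r)$ (in line with the remark that $\Ot(\infty^+)=\Ot(\infty)$), so both sides of the asserted equivalence are false and there is nothing to prove.

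For the implication ``$\Ot(R,r^+)\setminus\Ot(r)\neq\emptyset\ \Rightarrow\ r$ is a right-side accumulation point of $R$'' I would argue by contraposition. If $r$ is not a right-side accumulation point of $R$, fix $s_0>r$ with $R\cap(r,s_0)=\emptyset$. Since $r\in R$, for any $s\in(r,s_0)$ we get $R\cap[r,s]=\{r\}$, hence $\Ot(R\cap[r,s])=\Ot(\{r\})=\Ot(r)$. As $\Ot(R,r^+)$ is by definition the intersection of the sets $\Ot(R\cap[r,s'])$ over $s'>r$, it is contained in this particular $\Ot(R\cap[r,s])=\Ot(r)$, and therefore $\Ot(R,r^+)\setminus\Ot(r)=\emptyset$. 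This direction is immediate from the definition of $\Ot(R,r^+)$ as an intersection.

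For the converse, assume $r$ is a right-side accumulation point of $R$. Since $R$ is closed and meets $(r,s)$ for every $s>r$, I would extract a strictly decreasing sequence $(r_n)_{n=1}^\infty$ in $R\cap(r,\infty)$ with $r_n\to r$; put $R_0=\{r\}\cup\{r_n:n\in\NN\}\subseteq R$ and fix positive weights $(b_n)_{n=1}^\infty$ with $\sum_n b_n=1$. By Lemma~\ref{lem:NoPotencial} applied to $(r_n)$ and $(b_n)$, the Orlicz function $F(t)=\sum_n b_n t^{r_n}$ lies in $\Ot(R_0,r^+)\setminus\Ot(r)$. It then remains to transfer membership from $R_0$ to $R$: for every $s>r$ one has $R_0\cap[r,s]\subseteq R\cap[r,s]$, hence $\Kt(R_0\cap[r,s])\subseteq\Kt(R\cap[r,s])$ since the closed convex hull is monotone in the underlying set, and therefore $\Ot(R_0\cap[r,s])\subseteq\Ot(R\cap[r,s])$ straight from the definition of $\Ot$. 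Intersecting over $s>r$ gives $\Ot(R_0,r^+)\subseteq\Ot(R,r^+)$, whence $F\in\Ot(R,r^+)\setminus\Ot(r)$, as wanted.

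I do not expect a real obstacle here. Essentially all the substance is already packaged inside Lemma~\ref{lem:NoPotencial}, where the representation of elements of $\Kt$ as $\bpsi_\mu$ (Lemmas~\ref{lem:compact} and \ref{lem:minsupp}) together with a dominated-convergence computation shows that a genuine mixture of powers with exponents decreasing to $r$ cannot be comparable near $0$ to the single power $\bpsi_r$. Relative to that, this proposition contributes only the trivial ``intersection'' inclusion in one direction and the monotonicity $R\mapsto\Ot(R)$ in the other; the one point that requires a little care is to build the witnessing $F$ from exponents genuinely lying in $R$ (so that Lemma~\ref{lem:NoPotencial} applies to a subset $R_0$ of $R$) and then to push the inclusion $\Ot(R_0,r^+)\subseteq\Ot(R,r^+)$ in the correct direction.
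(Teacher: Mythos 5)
Your proof is correct and follows essentially the same route as the paper: the easy direction comes from $\Ot(R\cap[r,s])=\Ot(\{r\})=\Ot(r)$ when $R\cap(r,s)=\emptyset$, and the converse invokes Lemma~\ref{lem:NoPotencial} with exponents $r_n\in R$ decreasing to $r$. The only differences are cosmetic: you phrase the first direction contrapositively, make explicit the monotonicity $\Ot(R_0,r^+)\subseteq\Ot(R,r^+)$ that the paper uses tacitly, and dispose of the $r=\infty$ edge case up front.
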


\begin{proof}
Let $F\notin\Ot(r)$ be such that $F\in\Ot(R\cap [r,s])$ for all $s>r$. Then, $\{r\}\subsetneq R\cap [r,s]$ for all $s>r$.

Suppose that $r$ is a right-side accumulation point of $R$. Let $(r_n)_{n=1}^\infty$ in $R$ be decreasing to $r$. Pick $(b_n)_{n=1}^\infty$ with $\sum_{n=1}^\infty b_n=1$. If $F$ is as in Lemma~\ref{lem:NoPotencial}, $F\in \Ot(R\cap [r,s])$ for all $s>r$.
\end{proof}

\begin{corollary}
Let $R\subseteq (0,\infty]$ be closed, nonempty, and bounded away from zero. The following are equivalent.
\begin{itemize}
\item $\Ot(R)=\cup_{r\in R} \Ot(r)$.
\item $R$ has no right-side acumulation point.
\end{itemize}
\end{corollary}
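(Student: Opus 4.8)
The plan is to prove the two stated conditions are equivalent by showing each implication separately, relying on the machinery already developed, in particular Proposition~\ref{prop:lefttendence}.

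\emph{Proof sketch.} First I would establish the inclusion $\bigcup_{r\in R}\Ot(r)\subseteq\Ot(R)$, which holds unconditionally: if $F\in\Ot(r)$ for some $r\in R$, then $F\approx\bpsi_r$ near the origin, and since $\bpsi_r=\bpsi_{\delta_r}$ with $\delta_r\in\Pt(R)$, Lemma~\ref{lem:compact}\,(i) gives $\bpsi_r\in\Kt(R)$, hence $F\in\Ot(R)$. Thus the content of the corollary is the reverse inclusion $\Ot(R)\subseteq\bigcup_{r\in R}\Ot(r)$ versus the absence of right-side accumulation points.

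Next, suppose $R$ has no right-side accumulation point, and take $F\in\Ot(R)$. By Lemma~\ref{lem:compact}\,(i) there is $\mu\in\Pt(R)$ with $F\approx\bpsi_\mu$. Put $r=\min(\supp(\mu))$; since $\supp(\mu)\subseteq R$ is closed, $r\in R$. If $r$ were a right-side accumulation point of $\supp(\mu)\subseteq R$ we would contradict the hypothesis, so there is $s>r$ with $\supp(\mu)\cap(r,s)=\emptyset$; as $r=\min(\supp(\mu))$, this forces $\supp(\mu)\cap[r,s)=\{r\}$, i.e.\ $\mu(\{r\})=\mu([r,s))$. Running the argument in the proof of Lemma~\ref{lem:minsupp} with this $s$, the restriction of $\mu$ (normalized) to $[r,s)$ is exactly $\delta_r$, and the two-sided bound $\bpsi_\mu(t)\le\bpsi_{\delta_r}(t)\le\lambda^{-1}\bpsi_\mu(t)$ there gives $F\approx\bpsi_\mu\approx\bpsi_r$ near zero, so $F\in\Ot(r)\subseteq\bigcup_{r\in R}\Ot(r)$. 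Hence $\Ot(R)=\bigcup_{r\in R}\Ot(r)$.

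Conversely, suppose $R$ has a right-side accumulation point $r\in R$. Then by Proposition~\ref{prop:lefttendence} there is $F\in\Ot(R,r^+)\setminus\Ot(r)$; in particular $F\in\Ot(R\cap[r,s])\subseteq\Ot(R)$ for some $s>r$, so $F\in\Ot(R)$. On the other hand, I claim $F\notin\bigcup_{r'\in R}\Ot(r')$. Since $F\in\Ot(R\cap[r,s])$, Lemma~\ref{lem:singleempty} shows $F$ cannot lie in $\Ot(r')$ for any $r'\ne r$: indeed $\Ot(r')\cap\Ot(R\cap[r,s])\subseteq\Ot(r')\cap\Ot(R\cap[r,s])$ is empty unless $r'\in[r,\max(R\cap[r,s])]$ collapses to $r'=r$ via part (i)/(ii) of that lemma (applied with the singleton $R_1=\{r'\}$ and $R_2=R\cap[r,s]$, or vice versa). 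And $F\notin\Ot(r)$ by construction. Therefore $F\in\Ot(R)\setminus\bigcup_{r'\in R}\Ot(r')$, so the equality fails. The main obstacle is the bookkeeping in this last step — pinning down exactly which singletons to feed into Lemma~\ref{lem:singleempty} so that its dichotomy rules out every $r'\neq r$ while $F\notin\Ot(r)$ handles $r'=r$; once the supports are matched correctly the rest is immediate. $\qed$
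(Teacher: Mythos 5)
Your overall strategy is the same as the paper's (which simply combines Proposition~\ref{prop:lefttendence} with Lemma~\ref{lem:tellapart}, via the partition of Proposition~\ref{prop:OrliczGoesToExtreme}). Your forward direction is fine: writing $F\approx\bpsi_\mu$ near $0$ via Lemma~\ref{lem:compact}, noting $r=\min(\supp(\mu))\in R$, and using the absence of right-side accumulation points to force $\mu|_{[r,s)}=\mu(\{r\})\delta_r$ so that the argument of Lemma~\ref{lem:minsupp} yields $\bpsi_\mu\approx\bpsi_r$, is a correct (slightly more self-contained) route to $F\in\Ot(r)$.

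The converse, however, has the gap you yourself flag, and it is a real one as written. You fix a single $s>r$ with $F\in\Ot(R\cap[r,s])$ and then try to exclude $F\in\Ot(r')$ for every $r'\in R\setminus\{r\}$ by feeding $\{r'\}$ and $R\cap[r,s]$ into Lemma~\ref{lem:singleempty}. This works when $r'<r$ (take $R_1=\{r'\}$, $R_2=R\cap[r,s]$, so $\max(R_1)=r'<r=\min(R_2)$) and when $r'>\max(R\cap[r,s])$, but it fails for $r'\in R\cap(r,s]$: there $\max(R\cap[r,s])\ge r'>r=\min(R\cap[r,s])$, so neither ordering of the two sets satisfies the hypothesis $\max(R_1)\le\min(R_2)$ of Lemma~\ref{lem:singleempty}, and the lemma gives nothing. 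The clean repair is either (a) to use the full strength of $F\in\Ot(R,r^+)=\bigcap_{s>r}\Ot(R\cap[r,s])$ and, for each $r'>r$, shrink $s$ to some $s'$ with $r<s'<r'$ before applying Lemma~\ref{lem:singleempty}; or (b) to argue as the paper does: $\Ot(r')\subseteq\Ot(r'^+)$ and $\Ot(R,r^+)\subseteq\Ot(r^+)$, so Lemma~\ref{lem:tellapart} immediately gives $\Ot(R,r^+)\cap\Ot(r')=\emptyset$ for every $r'\ne r$, while $F\notin\Ot(r)$ by construction. With either fix the proof is complete.
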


\begin{proof}
Just combine Proposition~\ref{prop:lefttendence} with Lemma~\ref{lem:tellapart}.
\end{proof}

\begin{proposition}\label{prop:OrliczGoesToExtreme}
Let $R\subseteq (0,\infty]$ be closed, nonempty, and bounded away from zero. Then,
\[
\Ot(R,r^+), \quad r\in R,
\]
is a partition of $\Ot(R)$.
\end{proposition}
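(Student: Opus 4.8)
The plan is to read the statement off the machinery already built in this section; recall that ``partition'' means that the sets $\Ot(R,r^+)$, $r\in R$, are nonempty, pairwise disjoint, and have union $\Ot(R)$. I would verify these three facts in turn. The ingredients are the representation of the members of $\Kt(R)$ by probability measures together with the compactness of $\Kt(R)$ (Lemma~\ref{lem:compact}), Lemma~\ref{lem:minsupp}, Lemma~\ref{lem:tellapart}, and the elementary monotonicity $S\subseteq T\Rightarrow\Kt(S)\subseteq\Kt(T)\Rightarrow\Ot(S)\subseteq\Ot(T)$, which is immediate from the definition of the closed convex hull.

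\emph{Covering.} Given $F\in\Ot(R)$, I would pick $0<c\le1$ and $\varphi\in\Kt(R)$ with $F\approx\varphi$ on $[0,c]$, and invoke Lemma~\ref{lem:compact} to write $\varphi=\bpsi_\mu$ with $\mu\in\Pt(R)$. Because $R$ is closed and bounded away from zero, $\supp(\mu)$ is a nonempty compact subset of $(0,\infty]$, so $r:=\min(\supp(\mu))$ exists and belongs to $R$; then Lemma~\ref{lem:minsupp} gives $F\in\Ot(R,r^+)$. Conversely, for each $r\in R$ we have $\Ot(R,r^+)\subseteq\Ot(R\cap[r,s])\subseteq\Ot(R)$ for any $s>r$ (the first inclusion by definition, the second by monotonicity of $\Ot$ applied to $R\cap[r,s]\subseteq R$); when $r=\infty$ one reads instead $\Ot(R,\infty^+)=\Ot(\{\infty\})\subseteq\Ot(R)$, in accordance with the convention $\Ot(\infty^+)=\Ot(\infty)$ recorded after the definition. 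Hence $\bigcup_{r\in R}\Ot(R,r^+)=\Ot(R)$.

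\emph{Disjointness.} Let $q,r\in R$ with $q<r$. Monotonicity of $\Ot$ applied to $R\cap[q,s]\subseteq[q,s]$ for every $s>q$ gives $\Ot(R,q^+)=\bigcap_{s>q}\Ot(R\cap[q,s])\subseteq\bigcap_{s>q}\Ot([q,s])=\Ot(q^+)$, and likewise $\Ot(R,r^+)\subseteq\Ot(r^+)$. Since $0<q<r\le\infty$, Lemma~\ref{lem:tellapart} yields $\Ot(q^+)\cap\Ot(r^+)=\emptyset$, whence $\Ot(R,q^+)\cap\Ot(R,r^+)=\emptyset$. \emph{Nonemptiness.} For $r\in R$ the Dirac mass $\delta_r$ lies in $\Pt(R)$ and $\min(\supp(\delta_r))=r$, so by Lemma~\ref{lem:minsupp} every Orlicz function comparable near zero to $\bpsi_{\delta_r}=\bpsi_r$ belongs to $\Ot(R,r^+)$; in other words $\Ot(r)\subseteq\Ot(R,r^+)$, and $\Ot(r)$ contains $\vPsi_r$, so $\Ot(R,r^+)\ne\emptyset$.

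I do not anticipate a real obstacle: the argument is essentially bookkeeping built on Lemmas~\ref{lem:compact}, \ref{lem:minsupp} and \ref{lem:tellapart}. The two places that call for a little care are making the monotonicity $S\subseteq T\Rightarrow\Ot(S)\subseteq\Ot(T)$ explicit and handling the endpoint $r=\infty$, where the intersection $\bigcap_{s>r}$ in the definition of $\Ot(R,r^+)$ must be interpreted via the conventions introduced just after that definition.
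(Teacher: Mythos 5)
Your proof is correct and follows exactly the route the paper takes: the paper's proof is simply "combine Lemma~\ref{lem:tellapart}, Lemma~\ref{lem:compact} and Lemma~\ref{lem:minsupp}", and you have spelled out precisely how those three lemmas yield covering (via the probability-measure representation and $r=\min(\supp(\mu))$), pairwise disjointness, and nonemptiness, together with the routine monotonicity of $\Ot$ and the convention at $r=\infty$.
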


\begin{proof}
Just combine Lemma~\ref{lem:tellapart}, Lemma~\ref{lem:compact} and Lemma~\ref{lem:minsupp}.
\end{proof}

\begin{corollary}
$\Ot(r^{-}) :=\cap_{s<r} \Ot([s,r]) =\Ot(r)$ for all $r\in(0,\infty]$.
\end{corollary}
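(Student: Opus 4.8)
The plan is to prove the two inclusions of $\Ot(r^{-})=\Ot(r)$ separately. The inclusion $\Ot(r)\subseteq\Ot(r^{-})$ is immediate: for every $s\in(0,r)$ we have $r\in[s,r]$, hence $\bpsi_r\in\Kt([s,r])$, so $\Ot(r)=\Ot(\{r\})\subseteq\Ot([s,r])$; intersecting over $s<r$ gives $\Ot(r)\subseteq\Ot(r^{-})$. (Equivalently, this is Lemma~\ref{lem:singleempty}\ref{it:SE:a} applied with $R_1=[s,r]$ and $R_2=\{r\}$.)

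For the reverse inclusion I will argue by contradiction. Fix $F\in\Ot(r^{-})=\bigcap_{s<r}\Ot([s,r])$ and some $s_0\in(0,r)$. By the definition of $\Ot([s_0,r])$ together with Lemma~\ref{lem:compact}, there is a probability measure $\mu$ on $[s_0,r]$ with $F\approx\bpsi_\mu$ near the origin; set $q:=\min(\supp\mu)$, so $s_0\le q\le r$. If $q=r$, then $\supp\mu=\{r\}$, forcing $\mu=\delta_r$; hence $F\approx\bpsi_r$ when $r<\infty$, that is $F\in\Ot(r)$, whereas if $r=\infty$ then $\bpsi_\mu$ vanishes on $[0,1]$, so $F$ is null in a neighbourhood of the origin and again $F\in\Ot(\infty)=\Ot(r)$. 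So it suffices to exclude $q<r$.

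Assume $q<r$ and pick $q<s_1<s_2<r$. By Lemma~\ref{lem:minsupp}, $F\in\Ot([s_0,r],q^{+})$. Since $s_0\le q<s_1<r$, we have $[q,s_1]\subseteq[s_0,r]$, so unwinding $\Ot([s_0,r],q^{+})=\bigcap_{s'>q}\Ot([s_0,r]\cap[q,s'])$ at $s'=s_1$ gives $F\in\Ot([q,s_1])$. On the other hand $F\in\Ot([s_1,r])$ by hypothesis, and $\max[q,s_1]=\min[s_1,r]=s_1$, so Lemma~\ref{lem:singleempty}\ref{it:SE:a} yields $F\in\Ot(s_1)$. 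But then $F\in\Ot(\{s_1\})\cap\Ot([s_2,r])$ with $\max\{s_1\}=s_1<s_2=\min[s_2,r]$, an intersection that is empty by Lemma~\ref{lem:singleempty}; a contradiction. Hence $q=r$ and $F\in\Ot(r)$.

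The step on which the whole argument hinges is the unwinding of the stratum $\Ot([s_0,r],q^{+})$: one must use $s_0\le q$ to place $[q,s_1]$ inside $[s_0,r]$, after which matters reduce to the two elementary two-interval computations $\Ot([q,s_1])\cap\Ot([s_1,r])$ and $\Ot(\{s_1\})\cap\Ot([s_2,r])$ furnished by Lemma~\ref{lem:singleempty}. A variant replaces the case analysis by the partition Proposition~\ref{prop:OrliczGoesToExtreme} on a fixed interval $[s_1,r]$: any $F\in\Ot(r^{-})$ belongs to a single stratum $\Ot([s,r],q_s^{+})$ with $q_s\ge s$ for every $s<r$; the index $q_s$ is independent of $s$, because two representing measures of $F$ that both live on $[s_1,r]$ land in the same stratum by Lemma~\ref{lem:minsupp} and the uniqueness of the partition; and $q_s\ge s$ for all $s<r$ forces the common value to equal $r$, giving $F\in\Ot([s,r],r^{+})=\Ot(r)$.
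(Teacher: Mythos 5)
Your proof is correct, but your main argument takes a genuinely different route from the paper's. The paper deduces the nontrivial inclusion in three lines from the partition result (Proposition~\ref{prop:OrliczGoesToExtreme}) together with Lemma~\ref{lem:tellapart}: for each $s<r$ there is $p(s)\in[s,r]$ with $F\in\Ot([s,r],p(s)^{+})$; disjointness of the strata $\Ot(p^{+})$ forces $p(s)$ to be independent of $s$, and $p(s)\ge s$ for all $s<r$ then forces $p(s)=r$. You instead fix a single $s_0<r$, extract a representing probability measure $\mu$ on $[s_0,r]$ via Lemma~\ref{lem:compact}, and rule out $q:=\min(\supp\mu)<r$ by two applications of Lemma~\ref{lem:singleempty}; this essentially inlines, for the case at hand, the proof of Proposition~\ref{prop:OrliczGoesToExtreme}. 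Both arguments are sound: the paper's is shorter because the partition machinery is already in place, while yours is more self-contained and makes the role of the representing measure explicit. Your closing ``variant'' is essentially the paper's proof, though the claim that $q_s$ is independent of $s$ is cleaner to justify via Lemma~\ref{lem:tellapart} (using $\Ot(R,p^{+})\subseteq\Ot(p^{+})$) than by appealing to ``uniqueness of the partition'', since the two strata belong to partitions of the different sets $\Ot([s,r])$ and $\Ot([s',r])$. The easy inclusion $\Ot(r)\subseteq\Ot(r^{-})$, which the paper leaves implicit, is handled correctly.
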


\begin{proof}
Let $F\in \Ot(r^{-})$. By Lemma~\ref{prop:OrliczGoesToExtreme} for each $s<r$ there is $p(s)\in [s,r]$ such that $F\in\Ot(p(s)^+)$. By Lemma~\ref{lem:tellapart}, $p(s)=r$ for all $s<r$. Applying Lemma~\ref{lem:singleempty}\ref{it:SE:a} puts an end to the proof.
\end{proof}

\begin{corollary}\label{cor:34}
Let $q\in(0,\infty]$. Let $R\subseteq (0,\infty]$ be closed, nonempty, and bounded away from zero. Then $\vPsi_q\in\Ot(R)$ if and only if $q\in R$.
\end{corollary}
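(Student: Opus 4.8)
The plan is to handle the two implications separately. The \emph{if} direction is immediate: assuming $q\in R$, one first observes that $\vPsi_q$ is an Orlicz function (nondecreasing, left-continuous, null at $0^+$, with $\vPsi_q(\infty)>0$, the case $q=\infty$ included). Since $q\in R$ the Dirac mass $\delta_q$ belongs to $\Pt(R)$, so $\bpsi_q=\bpsi_{\delta_q}\in\Kt(R)$; and since $\bpsi_q=\vPsi_q|_{[0,1]}$ by definition, choosing $c=1$ and $\varphi=\bpsi_q$ in the definition of $\Ot(R)$ shows $\vPsi_q\in\Ot(R)$.

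For the \emph{only if} direction, suppose $\vPsi_q\in\Ot(R)$. By definition there are $c\in(0,1]$, a constant $D\ge1$, and $\varphi\in\Kt(R)$ with $D^{-1}\varphi(t)\le\vPsi_q(t)\le D\varphi(t)$ for $0\le t\le c$, and by Lemma~\ref{lem:compact} we may take $\varphi=\bpsi_\mu$ for some probability measure $\mu$ carried by $R$. I claim $q\in\supp(\mu)$, which yields $q\in\supp(\mu)\subseteq R$. If not, then since $\supp(\mu)$ is closed in $(0,\infty]$ there is $\delta>0$ with $\mu((q-\delta,q+\delta))=0$ when $q<\infty$, respectively a finite $N$ with $\mu((N,\infty])=0$ when $q=\infty$. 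Using \eqref{eq:ts} with $u=0$, which gives $\vPsi_s(t)\le\vPsi_p(t)$ on $[0,1]$ whenever $p\le s$, one estimates $\bpsi_\mu(t)=\int\vPsi_p(t)\,d\mu(p)$ near $t=0$: when $\mu$ puts positive mass $\lambda$ on $(0,q-\delta]$ one gets $\bpsi_\mu(t)\ge\lambda\,\vPsi_{q-\delta}(t)$ on $[0,1]$; when $\mu$ is carried by $[q+\delta,\infty]$ one gets $\bpsi_\mu(t)\le\vPsi_{q+\delta}(t)$ on $[0,1]$; and when $q=\infty$ the measure $\mu$ is carried by $(0,N]$, so $\bpsi_\mu(t)\ge\vPsi_N(t)>0$ for $0<t\le1$. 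Comparing each bound with $\vPsi_q$ through the constant $D$ forces $t^{\delta}$ (or $t^{-\delta}$) to remain bounded away from $0$ or from $\infty$ as $t\to0^+$ --- respectively forces the function $\vPsi_\infty$, which is identically $0$ on $[0,1]$, to dominate a strictly positive function near $0$ --- a contradiction in every case. Hence $q\in\supp(\mu)\subseteq R$.

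I do not anticipate a real obstacle; the points needing care are the bookkeeping for $q=\infty$ (there $\vPsi_\infty$ vanishes on $[0,1]$, so only the lower-bound alternative is used) and checking that splitting $\mu$ into its mass on $(0,q-\delta]$ and on $[q+\delta,\infty]$ covers all possibilities. A slightly heavier alternative for the \emph{only if} part would go through Proposition~\ref{prop:OrliczGoesToExtreme}: it provides a unique $r\in R$ with $\vPsi_q\in\Ot(R,r^+)$, whence $\vPsi_q\in\Ot(R\cap[r,s])$ for every $s>r$; applying the two-sided estimates of Lemma~\ref{lem:lemA} to $\Kt(R\cap[r,s])$ and letting $s\downarrow r$, with $R$ closed so that $\max(R\cap[r,s])\to r$, one is forced to $r=q$.
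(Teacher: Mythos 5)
Your proof is correct, but it follows a different route from the paper's. The paper disposes of the ``only if'' direction in one line by citing the partition result (Proposition~\ref{prop:OrliczGoesToExtreme}): it yields an $r\in R$ with $\vPsi_q\in\Ot(r^+)$, and since trivially $\vPsi_q\in\Ot(q^+)$, the disjointness Lemma~\ref{lem:tellapart} forces $q=r$. (The ``if'' direction, which you spell out via $\delta_q\in\Pt(R)$, is left implicit there.) You instead argue directly: represent the comparison function as $\bpsi_\mu$ with $\mu\in\Pt(R)$ via Lemma~\ref{lem:compact}, and show $q\in\supp(\mu)$ by splitting the mass of $\mu$ into $(0,q-\delta]$ and $[q+\delta,\infty]$ and comparing the resulting bounds $\lambda\vPsi_{q-\delta}\le\bpsi_\mu$ or $\bpsi_\mu\le\vPsi_{q+\delta}$ against $\vPsi_q$ near $0$; the case split is exhaustive and the $q=\infty$ bookkeeping (where $\vPsi_\infty$ vanishes on $[0,1]$ and only the lower-bound branch is relevant) is handled correctly. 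What your approach buys is self-containedness and a concrete picture of why the exponent is pinned down --- essentially a hands-on re-proof of the special case of Lemma~\ref{lem:minsupp} and Lemma~\ref{lem:tellapart} needed here, at the cost of redoing the case analysis that the paper's machinery already packages. Your sketched alternative at the end is essentially the paper's argument, except that the paper closes with Lemma~\ref{lem:tellapart} rather than a limiting argument in $s\downarrow r$; both are valid.
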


\begin{proof}
Suppose that $\vPsi_q\in\Ot(R)$. By Proposition~\ref{prop:OrliczGoesToExtreme}, there is $r\in R$ such that $\vPsi_q\in\Ot(r^+)$. By Lemma~\ref{lem:tellapart}, $q=r$.
\end{proof}

\begin{example}
Let $0<p<\infty$ and $a<0$. Then, $F_{p,a}$ does not belong to $\Ot(R)$ for any $R\subseteq (0,\infty]$. Otherwise, by Proposition~\ref{prop:OrliczGoesToExtreme} and Lemma~\ref{lem:lemA}, there would be $r\in (0,\infty)$ and $(C_s)_{s\ge r}$ in $(0,\infty)$ such that
\[
\frac{1}{C_s} \bpsi_s(t) \le F_{p,a}(t) \le C_r \bpsi_r(t), \quad s\in(r,\infty], \, t\in[0,1].
\]
The right-hand inequality would yield $p>r$. Then, applying the left-hand inequality with $r<s<p$ we would reach an absurdity.
\end{example}

\begin{theorem}\label{thm:DualOR}
Let $R\subseteq (0,\infty]$ be closed, nonempty, and bounded away from zero.
Let $F$ be a convex Orlicz function such that $F\in\Ot(R)$ and $F^* \in \Ot((0,\infty])$. Then, there is $r\in R \cap [1,\infty]$ such that $F\in\Ot(r)$.
\end{theorem}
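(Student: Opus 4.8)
The plan is to combine the partition of $\Ot(R)$ provided by Proposition~\ref{prop:OrliczGoesToExtreme} with two ingredients: convexity of $F$ (which forces the relevant exponent to be $\ge 1$) and a Young/Legendre duality estimate relating the behaviour of $F$ near $0$ to that of $F^*$ near $0$. By Proposition~\ref{prop:OrliczGoesToExtreme} there is a unique $r\in R$ with $F\in\Ot(R,r^+)$, and it suffices to show that $r\ge 1$ and $F\in\Ot(r)$.

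The first step is the general fact that \emph{if $G$ is a convex Orlicz function, $R'\subseteq(0,\infty]$ is closed, nonempty and bounded away from zero, and $t\in R'$ is the unique exponent with $G\in\Ot(R',t^+)$, then $t\ge 1$}. Indeed, if $t<1$, choose $s\in(t,1)$; then $R'\cap[t,s]$ is closed, nonempty, bounded away from zero, with minimum $t$ and maximum $t_1\le s<1$, and from $G\in\Ot(R'\cap[t,s])$ and Lemma~\ref{lem:lemA} one gets $G(u)\gtrsim u^{t_1}$ near $0$, hence $G(u)/u\to\infty$ as $u\to 0^+$; this is impossible for a convex $G$ with $G(0)=0$, for which $u\mapsto G(u)/u$ is nondecreasing (and finite for small $u$). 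Applying this to $G=F$ gives $r\ge 1$.

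Now suppose, towards a contradiction, that $F\notin\Ot(r)$; then $r<\infty$ (for $r=\infty$ would give $F\in\Ot(\infty)=\Ot(r)$). By Lemma~\ref{lem:compact}(i) write $F\approx\bpsi_\mu$ near $0$ with $\mu\in\Pt(R)$; Lemma~\ref{lem:minsupp} and the uniqueness above force $\min\supp\mu=r$, and $\mu(\{r\})=0$ (otherwise $\bpsi_\mu\approx\vPsi_r$ near $0$, i.e.\ $F\in\Ot(r)$). Dominated convergence then gives $\bpsi_\mu(t)/t^r\to 0$, so $F(t)=o(t^r)$ as $t\to 0^+$; and, since $\supp\mu\subseteq R$ carries mass arbitrarily close to $r$ from the right, $r$ is a right-side accumulation point of $R$, so for each $\sigma>r$ there is $\sigma_1:=\max(R\cap[r,\sigma])\in(r,\sigma]$ with (Lemma~\ref{lem:lemA}) $F(t)\gtrsim t^{\sigma_1}$ near $0$; here $\sigma_1>1$ since $r\ge 1$, and $\sigma_1\to r^+$ as $\sigma\to r^+$. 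The crux is to transfer these to estimates on $F^*$ through the Legendre transform. From $F(t)=o(t^r)$ and the elementary maximisation of $t\mapsto tv-\varepsilon t^r$ one obtains $F^*(v)/v^{r'}\to\infty$ as $v\to 0^+$ (when $r=1$, read $r'=\infty$: this just asserts $F^*$ is not null near $0$, which follows from $F(t)/t\to 0$). From $F(t)\gtrsim t^{\sigma_1}$ near $0$, together with the convexity bound $F(t)\gtrsim t$ for $t$ bounded away from $0$ — which confines the supremum defining $F^*(v)$ to small $t$ when $v$ is small — one obtains $F^*(v)\lesssim v^{(\sigma_1)'}$ near $0$, where the conjugate exponents $(\sigma_1)'$ tend to $r'$ as $\sigma\to r^+$.

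Finally I bring in the hypothesis $F^*\in\Ot((0,\infty])$: pick $q>0$ with $F^*\in\Ot([q,\infty])$, so by Proposition~\ref{prop:OrliczGoesToExtreme} $F^*\in\Ot(q_0^+)$ for some $q_0$, and $q_0\ge 1$ by the general fact applied to the convex Orlicz function $F^*$. From $F^*\in\Ot([q_0,q_0+1])$ and Lemma~\ref{lem:lemA}, $F^*(v)\lesssim v^{q_0}$ near $0$; from $F^*\in\Ot([q_0,q_1])$, $F^*(v)\gtrsim v^{q_1}$ near $0$ for every $q_1>q_0$. Since $F^*(v)\lesssim v^{q_0}$ while $F^*(v)/v^{r'}\to\infty$, necessarily $q_0<r'$. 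One may then choose $\sigma>r$ with $q_0<(\sigma_1)'<r'$ and $q_1\in(q_0,(\sigma_1)')$, and the two estimates yield $v^{q_1}\lesssim F^*(v)\lesssim v^{(\sigma_1)'}$ near $0$ with $q_1<(\sigma_1)'$, which is absurd. This contradiction proves $F\in\Ot(r)$, and combined with $r\ge 1$ we conclude $r\in R\cap[1,\infty]$. The step I expect to be the main obstacle is making the Legendre-duality estimates precise and uniform — tracking the exact conjugate exponents, correctly handling the borderline case $r=1$ (where $r'=\infty$), and carrying out the measure-theoretic book-keeping that passes from $F\in\Ot(R,r^+)$ to the two-sided power bounds on $F$ near $0$.
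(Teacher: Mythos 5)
Your argument is correct, and it runs on the same engine as the paper's proof: Proposition~\ref{prop:OrliczGoesToExtreme} to locate $r$ with $F\in\Ot(R,r^+)$ and $q_0$ with $F^*\in\Ot(q_0^+)$, the two-sided power bounds of Lemma~\ref{lem:lemA}, convexity to force the relevant exponents into $[1,\infty]$, and the transfer of one-sided power bounds across the Legendre transform (which the paper packages as Lemma~\ref{lem:OrliczBound} combined with Lemma~\ref{lem:dualpot} and Corollary~\ref{cor:doubling}). The difference is architectural. The paper argues directly: it converts the upper bound $F^*\lesssim\vPsi_{q_0}$ near the origin into the lower bound $\vPsi_{q_0'}\lesssim F$, and the lower bounds $\vPsi_s\lesssim F^*$ for $s>q_0$ into upper bounds $F\lesssim\vPsi_{s'}$; Lemma~\ref{lem:lemA} then squeezes $r=q_0'$ and $F\approx\vPsi_r$ near the origin, with no contradiction hypothesis and no use of the integral representation of elements of $\Kt(R)$. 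You instead assume $F\notin\Ot(r)$, invoke Lemma~\ref{lem:compact} and Lemma~\ref{lem:minsupp} to write $F\approx\bpsi_\mu$ with $\min\enpar{\supp(\mu)}=r$ and $\mu(\{r\})=0$, deduce $F(t)=o(t^r)$ by dominated convergence (the mechanism of Lemma~\ref{lem:NoPotencial}), and turn this into $F^*(v)/v^{r'}\to\infty$, which you then trap between the bounds $v^{q_1}\lesssim F^*(v)\lesssim v^{(\sigma_1)'}$ with $(\sigma_1)'$ increasing to $r'$. This costs an extra measure-theoretic step and a careful simultaneous choice of $\sigma$ and $q_1$ (plus the separate reading of the borderline case $r=1$, $r'=\infty$, which you handle correctly), but it has the merit of isolating exactly what fails when $F\notin\Ot(r)$, namely the little-$o$ decay of $F$ at the critical exponent. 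I find no gap.
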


\begin{proof}
By Proposition~\ref{prop:OrliczGoesToExtreme}, there are $r\in R$ and $q\in (0,\infty]$ such that $F\in\Ot(r^+)$ and $F^*\in\Ot(q^+)$. Since $F$ and $F^*$ are convex, there are $0<c<1$ and $0<C<\infty$ such that $\max\{F(t), F^*(t)\} \le Ct$ for all $t\in[0,c]$. By Lemma~\ref{lem:lemA}, $r$, $q\in [1,\infty]$.

By Lemma~\ref{lem:lemA}, there are $0<c_1<1$ and $0<C_1<\infty$ such that $F^*(t) \le C_1 \vPsi_q(t)$ for all $0 \le t \le c_1$. Hence,
\[
u v \le F(u) + F^*(v) \le F(u) + C_1 \vPsi_q(v), \quad 0 \le v \le c_1.
\]
By Lemma~\ref{lem:OrliczBound}, Lemma~\ref{lem:dualpot} and Corollary~\ref{cor:doubling} , there are $0<c_2<1$ and $0<C_2<\infty$ such that
$\vPsi_{q'}(t) \le C_2 F(t)$ for all $t\in[0,c_2]$. By Lemma~\ref{lem:lemA}, $r\le q'$.

Pick $s>q$. By Lemma~\ref{lem:lemA}, there are $0<c_3<1$ and $0<C_3<\infty$ such that $\vPsi_s(t)\le C_3 F^*(t)$ for all $0 \le t \le c_3$. By Lemma~\ref{lem:dualpot},
\[
u v \le \vPsi_{s'}(u) + \vPsi_{s}(v) \le \vPsi_{s'}(u) + C_3 F^*(v), \quad 0\le v \le c_3.
\]
By Lemma~\ref{lem:OrliczBound} and Corollary~\ref{cor:doubling}, there are $0<c_4<1$ and $0<C_4<\infty$ such that $F(t) \le C_4 \vPsi_{s'}(t)$ for all $t\in[0,c_4]$.
By Lemma~\ref{lem:lemA}, $s'\le s_1$ for all $s_1>r$. Consequently, $q'\le r$.

We have proved that $q'=r$ and that $\vPsi_{r}(t) \le C_2 F(t)$ for all $t\in[0,c_2]$. By Lemma~\ref{lem:lemA}, a reverse inequality holds, that is, there are $0<c_5<1$ and $0<C_5<\infty$ such that $F(t) \le C_5 \vPsi_{r}(t)$ for all $t\in[0,c_5]$.
\end{proof}
\section{Subsymmetric sequences in variable exponent Lebesgue spaces}\label{sec:subsymmetric}\noindent
A sequence space, say $(\Sym, \norm{\cdot}_\Sym)$, will be a function space over $\NN$ endowed with the counting measure. Let $(\ee_n)_{n=1}^\infty$ denote the unit vectors of $\FF^\NN$. If $\norm{\ee_n}_\Sym\approx 1$ for $n\in\NN$, we say that $\Sym$ is semi-normalized. If the mapping $T_\pi$ given by
\[
(a_n)_{n=1}^\infty \mapsto (a_{\pi(n)})_{n=1}^\infty
\]
defines an isometry of $\Sym$ for any permutation $\pi$ of $\NN$, we say the $\Sym$ is a symmetric sequence space. In turn, if $T_{\pi}$ defines a isometry from
\[
\{f\in\Sym\colon \supp(f)\subseteq\pi(\NN)\}
\]
onto $\Sym$ for every increasing map $\pi\colon\NN\to\NN$, we say that $\Sym$ is a subsymmetric sequence space. Any symmetric sequence space is subsymmetric.

We say that a sequence space $\Sym$ embeds into a function space $\XX$ if there is an isomorphic embedding $T\colon \Sym\to\XX$. If $(T(\ee_n))_{n=1}^\infty$ are pairwise disjointly supported, we say that $\Sym$ disjointly embeds into $\XX$. Note that the following are equivalent.
\begin{itemize}[leftmargin=*]
\item $\Sym$ disjointly embeds into $\XX$.
\item There is an isomorphic embedding $T_0\colon \Sym_0\to\XX$ such that $\enpar{T(\ee_n)}_{n=1}^\infty$ is disjointly supported, that is,
the unit vector system of $\Sym$ is equivalent to a disjointly supported sequence in $\XX_0$.
\item There is a pairwise disjointly supported sequence $\XB$ in $\XX\setminus\{0\}$ such that $\Sym=\Sym[\XX,\XB]$.
\end{itemize}

\begin{proposition}\label{prop:DSImpliesOrlicz}
Let $\pp$ be a variable exponent on a $\sigma$-finite measure space $(\Omega,\Sigma,\mu)$. Assume that $\pp^{-}>0$. Let $\XB=(x_n)_{n=1}^\infty$ be a disjointly supported semi-normalized sequence in $L_{\pp}$. Then, $\XB$ has a subsequence $\YB$ such that $\Sym[L_{\pp},\YB]=\ell_F$
for some $F\in \Ot(R(\pp))$.
\end{proposition}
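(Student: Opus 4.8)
The plan is to identify $\Sym[L_{\pp},\XB]$ with a Musielak--Orlicz sequence space, to represent the relevant Orlicz functions near the origin as members of $\Kt(R(\pp))$, to extract a convergent subsequence by compactness, and finally to collapse the resulting Musielak--Orlicz space to a single Orlicz space via Musielak's embedding criterion. Replacing each $x_n$ by $\abs{x_n}$ we may assume $x_n\ge 0$, and rescaling (which alters neither the set $\Sym[L_{\pp},\XB]$ nor its topology up to equivalence) we may assume $\norm{x_n}_{\pp}=1$. By the description of disjointly supported sequences in Musielak--Orlicz function spaces recalled above, $\Sym[L_{\pp},\XB]=\ell_{\Fb}$ with $\Fb=(F_n)_{n=1}^\infty$; using the modular $\rho^a_{\pp}$ in place of the equivalent $\rho_{\pp}$, we may take $F_n(t)=\rho^a_{\pp}(tx_n)=\int_\Omega \vPsi_{\pp(\omega)}(tx_n(\omega))\,d\mu(\omega)$.

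Consider first the case $\pp^+<\infty$, so that $\Omega^{\pp}_\infty=\emptyset$ and $L_{\pp}=H_{\pp}$. Since $\vPsi_p(st)=\vPhi_p(s)\vPsi_p(t)$ for $p<\infty$, for $t\in[0,1]$ we may write $F_n(t)=\bpsi_{\theta_n}(t)$, where $\theta_n$ is the push-forward of $x_n^{\pp}\,d\mu$ under $\pp$; then $\supp(\theta_n)\subseteq R(\pp)$ and $\theta_n(R(\pp))=\rho_{\pp}(x_n)=1$ by Lemma~\ref{lem:AttainsNorm}. Thus $F_n|_{[0,1]}=\bpsi_{\theta_n}\in\Kt(R(\pp))$ for each $n$. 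Since $\Kt(R(\pp))$ is a compact subset of $\Ct([0,1])$ by Lemma~\ref{lem:compact}, some subsequence $(n_k)_k$ satisfies $F_{n_k}\to\varphi$ uniformly on $[0,1]$ for a suitable $\varphi\in\Kt(R(\pp))$; passing to a further subsequence we may assume $\eta_k:=\norm{F_{n_k}-\varphi}_{L_\infty[0,1]}\le 2^{-k}$. By Lemma~\ref{lem:extension}, $\varphi$ extends to an Orlicz function $F\in\Ot(R(\pp))$ with $F|_{[0,1]}=\varphi$.

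It remains to check that $\ell_{(F_{n_k})}=\ell_F$, and for this I would invoke Theorem~\ref{thm:MusEmbed} in both directions. By Lemma~\ref{lem:lemA} (with $r=\pp^-$ and $s=\pp^+$) both $F_{n_k}(t)$ and $\varphi(t)$ are at least $t^{\pp^+}/\pp^+$ for $t\in[0,1]$; hence, choosing $\delta<1/\pp^+$, the constraint $F_{n_k}(t)<\delta$ (respectively $F(t)<\delta$) forces $t\in[0,t_\delta]$ with $t_\delta=(\pp^+\delta)^{1/\pp^+}<1$, and there $F(t)=\varphi(t)$ and $\abs{F_{n_k}(t)-F(t)}\le\eta_k$. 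Feeding $\bm a=(\eta_k)_k\in\ell_1$, $b=1$ and $C=1$ into Theorem~\ref{thm:MusEmbed} then yields $\ell_{(F_{n_k})}\subseteq\ell_F$ and $\ell_F\subseteq\ell_{(F_{n_k})}$, so $\Sym[L_{\pp},\YB]=\ell_{(F_{n_k})}=\ell_F$ with $\YB=(x_{n_k})$ and $F\in\Ot(R(\pp))$. The crux is exactly this last step: uniform convergence of the $F_{n_k}$ is only additive closeness, whereas Theorem~\ref{thm:MusEmbed} demands multiplicative comparability; it is the $\ell_1$ slack in Musielak's criterion, together with the two-sided power bounds of Lemma~\ref{lem:lemA} confining $t$ to a fixed compact subinterval of $[0,1)$, that bridges the gap.

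For general $\pp$ one first splits $\Omega=\Omega^{\pp}_\infty\cup(\Omega\setminus\Omega^{\pp}_\infty)$, so that $L_{\pp}\simeq L_\infty(\Omega^{\pp}_\infty)\oplus L_{\pq}$ with $\pq=\pp|_{\Omega\setminus\Omega^{\pp}_\infty}$ finite-valued, and $\Sym[L_{\pp},\XB]$ is the intersection of the corresponding $\Sym$-spaces of the pieces $(x_n\chi_{\Omega^{\pp}_\infty})$ and $(x_n\chi_{\Omega\setminus\Omega^{\pp}_\infty})$. The $L_\infty$-piece contributes, after a subsequence, a (possibly weighted) $\ell_\infty$-type constraint, always weaker than any genuine Orlicz constraint $\ell_G\subseteq\ell_\infty$ coming from the other piece; in the degenerate situations (e.g.\@ $x_n$ eventually supported on $\Omega^{\pp}_\infty$, or the $L_{\pq}$-norms of $x_n\chi_{\Omega\setminus\Omega^{\pp}_\infty}$ tending to $0$) one reads off directly $\Sym[L_{\pp},\YB]=\ell_{\vPhi_\infty}$, with $\vPhi_\infty\in\Ot(R(\pp))$ since then $\infty\in R(\pp)$. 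Otherwise the problem reduces to a disjointly supported semi-normalized sequence in $L_{\pq}$ with $\pq$ finite-valued and $R(\pq)\subseteq R(\pp)$; if $\pq^+<\infty$ this is the bounded case, and if $\pq$ is essentially unbounded one runs the same argument, passing first to a subsequence along which $\rho_{\pq}(x_n)$ converges (by Lemma~\ref{lem:AttainsNorm} it need no longer equal $1$) and renormalizing the $\theta_n$ when the limit is positive. When the limit function $\varphi\in\Kt(R(\pq))$ is nonzero it cannot be $\bpsi_{\delta_\infty}$, hence still carries mass on a bounded part of $R(\pq)$ and the power lower bound above survives; when $F_{n_k}\to 0$ on $[0,1]$ one obtains $\Sym[L_{\pp},\YB]=\ell_{\vPhi_\infty}$ once more. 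This bookkeeping around $\Omega^{\pp}_\infty$ and essentially unbounded exponents is the secondary technical point.
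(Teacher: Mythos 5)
Your proposal is correct and follows essentially the same route as the paper: identify $\Sym[L_{\pp},\XB]$ with $\ell_{\Fb}$, realize each $F_n|_{[0,1]}$ as $\bpsi_{\theta_n}$ for a (sub)probability measure supported in $R(\pp)$, extract a uniformly convergent subsequence by compactness of $\Kt(R(\pp))$ (Lemma~\ref{lem:compact}, Lemma~\ref{lem:extension}), and pass from summable additive errors to equality of Musielak--Orlicz spaces via Theorem~\ref{thm:MusEmbed}. The only difference is organizational: where you split $\Omega$ into $\Omega^{\pp}_\infty$ and its complement and treat several degenerate subcases, the paper rescales by $b=\sup_n\rho^L_M(\abs{x_n})$ so that $F_n(t)=G_n(tb)$ on $[0,1/b]$ and runs a single dichotomy on $c_n=\rho_{\pp}\enpar{\abs{x_n}/b\,\chi_{\Omega\setminus\Omega^{\pp}_\infty}}$, the case $c_n\to 0$ yielding $\ell_\infty=\ell_{\vPhi_\infty}$ together with an explicit contradiction argument showing $\infty\in R(\pp)$.
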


\begin{proof}
We have $L_{\pp}[\XB]=\ell_{\Fb}$, where $\Fb=(F_n)_{n=1}^\infty$ is given by
\[
F_n(t)=\int_\Omega \vPsi_{\pp(\omega)} \enpar{t \abs{x_n(\omega)}} \, d\mu(\omega), \quad t\in[0,\infty).
\]
Set $b=\sup_n \rho_M^L\enpar{\abs{x_n}}$ and define, for each $n\in\NN$,
\[
G_n\colon[0,\infty)\to[0,\infty], \quad G_n(t)=\int_{\Omega\setminus\Omega^{\pp}_\infty} \frac{t^{\pp(\omega)}}{\pp(\omega)} \enpar{\frac{\abs{ x_n(\omega)}}{b}}^{\pp(\omega)} \, d\mu(\omega).
\]
Since $\norm{x_n|_{\Omega^{\pp}_\infty}}_\infty\le b$ for all $n\in\NN$, $F_n(t)=G_n(tb)$ for all $t\in[0,1/b]$. Besides
\[
c_n:= \int_{\Omega\setminus\Omega^{\pp}_\infty} \enpar{\frac{\abs{ x_n(\omega)}}{b}}^{\pp(\omega)} \, d\mu(\omega)\le 1, \quad n\in\NN.
\]
Consider the following dichotomy.
\begin{itemize}
\item $\limsup_n c_n>0$.
\item $\limsup_n c_n=0$.
\end{itemize}
In the former case, passing to a subsequence we can assume that $\inf_n c_n>0$. By Lemma~\ref{lem:compact} and Lemma~\ref{lem:extension}, passing to a further subsequence we can assume that
\[
\sum_{n=1}^\infty \sup_{0\le t \le 1}\abs{\frac{G_n(t)}{c_n}-F(t)}<\infty.
\]
By Lemma~\ref{thm:MusEmbed}, $\ell_{\Fb}=\ell_F$.

In the latter case, passing to a subsequence we can assume that $\sum_{n=1}^\infty c_n<\infty$. Since
\[
G_n(t) \le \frac{c_n}{\pp^-},
\quad 0\le t\le 1, \, n\in\NN,
\]
$\ell_{\Fb}=\ell_\infty$ by Lemma~\ref{thm:MusEmbed}. Assume by contradiction that $\infty\notin R(\pp)$. Then, $\pp^{+}<\infty$ and $\Omega^{\pp}_\infty$ is a null set. If $0<a<\inf_n \rho_M^L\enpar{\abs{x_n}}$,
\[
1
\le \int_\Omega \enpar{\frac{\abs{ x_n(\omega)}}{a}}^{\pp(\omega)} \, d\mu(\omega)
=\enpar{ \frac{b}{a}}^{\pp^{+}} c_n, \quad n\in \NN.
\]
We reach an absurdity, and the proof is over.
\end{proof}

The following result generalizes \cite{HR2012}*{Proposition~4.5}.

\begin{lemma}\label{lem:disjointsets}
Let $\pp$ be a variable exponent over a $\sigma$-finite nonatomic measure space $(\Omega,\Sigma,\mu)$. For each $n\in\NN$, let $A_n$ be a finite subset of $R(\pp)$. Set
\[
\Nt=\enbrace{(n,p) \colon (n,p)\in \NN\times (0,\infty], \, p\in A_n}.
\]
For each $(n,p)\in\Nt$, let $U_{n,p}$ be a neighbourhood of $p$. Then, there is a family $(\Omega_{n,p})_{(n,p)\in\Nt}$ of pairwise disjoint measurable sets such that $\mu(\Omega_{n,p})>0$ and $\pp(\Omega_{n,p})\subseteq U_{n,p}$ for all $(n,p)\in\Nt$.
\end{lemma}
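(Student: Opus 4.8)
The plan is to reduce the statement to a single measure-theoretic fact: that in a nonatomic finite measure space every countable family of positive-measure sets admits a pairwise disjoint refinement. First I would record that $\Nt$ is countable, since each $A_n$ is finite. Next, since $\mu$ is $\sigma$-finite, I would replace it by an equivalent finite measure; this changes neither the null sets --- hence neither $R(\pp)=\supp(\pp(\mu))$ nor the nonatomicity of $\mu$ --- nor the meaning of the conclusion, so we may assume $\mu(\Omega)<\infty$. I would also shrink each $U_{n,p}$ to an open neighbourhood of $p$, which only strengthens the desired conclusion $\pp(\Omega_{n,p})\subseteq U_{n,p}$ while keeping $E_{n,p}:=\pp^{-1}(U_{n,p})$ in $\Sigma$. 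Since $p\in R(\pp)$ and $U_{n,p}$ is a neighbourhood of $p$, we have $\mu(E_{n,p})>0$. Granting the refinement fact, I would enumerate $\Nt=\{(n_k,p_k)\colon k\in\NN\}$ and set $\Omega_{n_k,p_k}:=F_k$, where $(F_k)_k$ refines $(E_{n_k,p_k})_k$.

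To establish the refinement fact for a countable family $(E_k)_k$ with $\mu(E_k)>0$, I would build $F_1,F_2,\dots$ recursively, keeping the invariant that, once $F_1,\dots,F_{m-1}$ have been chosen (pairwise disjoint, $F_i\subseteq E_i$, $\mu(F_i)>0$), one has $\mu(E_j\setminus(F_1\cup\cdots\cup F_{m-1}))>0$ for every $j\ge m$ --- which is vacuous for $m=1$. At stage $m$ put $G:=E_m\setminus(F_1\cup\cdots\cup F_{m-1})$ and $H_j:=E_j\setminus(F_1\cup\cdots\cup F_{m-1})$ for $j>m$, all of positive measure by the invariant. Choosing a merely ``small'' $F_m\subseteq G$ does not work, since $\inf_{j>m}\mu(H_j)$ may be $0$ and $F_m$ could then cover a tiny $H_j$; instead, for each $j>m$ with $\mu(H_j\cap G)>0$ I would use nonatomicity to reserve a set $C_j\subseteq H_j\cap G$ with $0<\mu(C_j)\le 2^{-j}\mu(G)$ (taking $C_j:=\emptyset$ otherwise), and then set $F_m:=G\setminus\bigcup_{j>m}C_j$. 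Then $\mu(\bigcup_{j>m}C_j)\le 2^{-m}\mu(G)<\mu(G)$ forces $\mu(F_m)>0$; moreover $F_m\subseteq E_m$ and $F_m$ is disjoint from $F_1,\dots,F_{m-1}$; and for each $j>m$, either $\mu(H_j\cap G)=0$, so that $\mu(H_j\setminus F_m)\ge\mu(H_j\setminus G)=\mu(H_j)>0$, or $C_j\subseteq H_j\setminus F_m$, so that $\mu(H_j\setminus F_m)>0$. Since $F_m$ misses $F_1\cup\cdots\cup F_{m-1}$, one has $E_j\setminus(F_1\cup\cdots\cup F_m)=H_j\setminus F_m$, so the invariant is restored and the recursion closes.

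The sets $\Omega_{n_k,p_k}$ obtained this way are pairwise disjoint, of positive measure, and contained in $\pp^{-1}(U_{n_k,p_k})$, i.e.\ $\pp(\Omega_{n_k,p_k})\subseteq U_{n_k,p_k}$, which completes the proof. The one point that is not pure bookkeeping is the standard fact used to produce the $C_j$: a set $S$ of positive measure in a nonatomic finite measure space contains subsets of arbitrarily small positive measure --- otherwise the infimum $\delta$ of the measures of the positive-measure subsets of $S$ would be positive, yet a subset of measure in $[\delta,2\delta)$ would split into two positive-measure pieces, each of measure at least $\delta$, forcing its measure to be at least $2\delta$. I expect the heart of the argument --- and its only genuine difficulty --- to be the design of this recursion: a greedy choice of $F_m$ exhausts the small $E_j$, and it is the pre-reservation of a summable total of measure inside each future $H_j\cap G$, performed before $F_m$ is committed, that keeps the invariant alive.
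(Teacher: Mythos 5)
Your proof is correct, and it takes a genuinely different route from the paper's. You work entirely downstairs in $\Omega$: after reducing to a finite measure and noting that each $E_{n,p}=\pp^{-1}(U_{n,p})$ has positive measure because $p\in R(\pp)$, you invoke a clean standalone fact --- in a nonatomic finite measure space, every countable family of positive-measure sets admits a pairwise disjoint refinement by positive-measure subsets --- which you prove by a recursion that, before committing $F_m\subseteq E_m$, reserves inside $E_m$ a summable total of measure ($\le 2^{-m}\mu(G)$) on behalf of each future $E_j$; this is exactly the device needed to keep the greedy choice from exhausting small sets later in the enumeration, and the supporting fact that nonatomic sets contain subsets of arbitrarily small positive measure is correctly justified. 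The paper instead works upstairs in the range space $(0,\infty]$ with the pushforward measure $\pp(\mu)$: it separates the points $p$ according to whether they are atoms of $\pp(\mu)$, disjointifies shrinking neighbourhoods $V_{n,p}$ with geometrically decaying pushforward mass for the non-atomic points (subtracting tails to get disjoint sets $W_{n,p}$ whose preimages serve as $\Omega_{n,p}$), and only for the atomic points falls back on nonatomicity of $\mu$ to split a single fibre $\pp^{-1}(q)$ into countably many positive-measure pieces. Your argument avoids this case distinction entirely, treats all $(n,p)$ uniformly, and isolates a reusable measure-theoretic lemma; the paper's version, by staying in the range space, keeps more quantitative control over where $\pp$ lands (each $\Omega_{n,p}$ is a full preimage of a subset of $V_{n,p}$), but for the stated conclusion that extra structure is not needed, and your proof is arguably the more transparent of the two.
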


\begin{proof}
Assume without loss of generality that $(A_n)_{n=1}^\infty$ is nondecreasing. Set $A_0=\emptyset$, $A=\cup_{n=1}^\infty A_n$,
\[
B=\enbrace{p\in A \colon \pp(\mu)\enpar{\enbrace {p}}=0}, \quad \Mt=\enbrace{(n,p)\in\Nt \colon p\in B}.
\]
Let $n_0$ be the smallest $n\in\NN$ such that $B_n:=A_{n}\cap B\not=\emptyset$. We recursively construct for each $n\in\NN$ a pairwise disjoint family
\[
(V_{n,p})_{p\in B_n}
\]
such that $V_{n,p}\subseteq U_{n,p}$ is a measurable neighbourhood of $p$ for all $p\in B_n$, and the set $D_n:=\bigcup_{p\in B_n} V_{n,p}$ satisfies
\[
\pp(\mu) \enpar{D_n} <\frac{1}{2} \min_{p\in B_{n-1}} \pp(\mu) \enpar{V_ {n-1,p}}
\]
as long as $n\ge n_0+1$.
Set $E_n=\cup_{k=n}^\infty D_k$ for all $n\in\NN$. Since $\pp(\mu) \enpar{D_{n+1}}\le \pp(\mu) \enpar{D_{n}}/2$ for all $n\in\NN$, $n\ge n_0$, $\pp(\mu)(E_n)\le 2 \pp(\mu)(D_n)$ for all $n\in\NN$, $n\ge n_0$.
Consequently, $\pp(\mu)(E_{n+1})<\pp(\mu)\enpar{V_ {n,p}}$ for all $n\in\NN$ and $p\in B_n$. Set
\[
W_{n,p}=V_{n,p} \setminus E_{n+1}, \quad (n,p)\in \Mt.
\]
The family $(W_{n,p})_{(n,p)\in\Mt}$ is pairwise disjoint, and $\pp(\mu)(W_{n,p})>0$ for all $(n,p)\in\Mt$. Consider the set
\[
\Lt=\enbrace{(n,p)\in \Mt \colon W_{n,p}\cap (A\setminus B)}=\emptyset.
\]
Let $\pi\colon \Nt\setminus \Lt\to A\setminus B$ be such that $\pi(n,p)=p$ for all $(n,p)\in\Nt\setminus \Mt$, and $\pi(n,p)\in W_{n,p}$ for all $(n,p)\in\Mt\setminus \Lt$. Since $\mu$ is nonatomic, for each $q\in A\setminus B$ there is a pairwise disjoint family
\[
(\Omega_{n,p})_{(n,p)\in\pi^{-1}(q) }
\]
of nonnull measurable subsets of $\pp^{-1}(p)$. If for each $(n,p)\in\Lt$ we choose
\[
\Omega_{n,p}=\pp^{-1}(W_{n,p}),
\]
the family $(\Omega_{n,p})_{(n,p)\in\Nt}$ satisfies the desired conditions.
\end{proof}

\begin{corollary}\label{cor:subsymdisjoint}
Let $\pp$ be a variable exponent on a nonatomic $\sigma$-finite measure space $(\Omega,\Sigma,\mu)$. Assume that $\pp^{-}>0$. Let $\Sym$ be a subsymmetric sequence space. The following are equivalent.
\begin{itemize}
\item $\Sym$ disjointly embeds into $L_{\pp}$.
\item $\Sym=\ell_F$ for some $F\in\Ot(R(\pp))$.
\end{itemize}
\end{corollary}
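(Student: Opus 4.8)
The plan is to prove both implications. The forward direction is essentially done: if $\Sym$ disjointly embeds into $L_{\pp}$, then the unit vector system of $\Sym$ is equivalent to a disjointly supported semi-normalized sequence $\XB=(x_n)_{n=1}^\infty$ in $L_{\pp}$ (semi-normalization is automatic because $\Sym$ is subsymmetric, hence $\norm{\ee_n}_\Sym\approx 1$). Applying Proposition~\ref{prop:DSImpliesOrlicz} gives a subsequence $\YB$ with $\Sym[L_{\pp},\YB]=\ell_F$ for some $F\in\Ot(R(\pp))$. But $\Sym[L_{\pp},\YB]$ is obtained from $\Sym$ by restricting to a subsequence, and since $\Sym$ is subsymmetric, $\Sym[L_{\pp},\YB]=\Sym$ isometrically. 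Hence $\Sym=\ell_F$ with $F\in\Ot(R(\pp))$.

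**The converse.** The work is in showing that every $\ell_F$ with $F\in\Ot(R(\pp))$ disjointly embeds into $L_{\pp}$. By definition of $\Ot(R(\pp))$, there are $0<c\le 1$ and $\varphi\in\Kt(R(\pp))$ with $F(t)\approx\varphi(t)$ on $[0,c]$; note that $R(\pp)$ is closed, nonempty, and bounded away from zero (since $\pp^->0$), so the machinery of Section~\ref{sec:Orlicz.functions} applies. By Lemma~\ref{lem:compact}, $\varphi=\bpsi_\mu$ for some probability measure $\mu$ over a compact subset — actually we may as well take $\mu$ supported on $R(\pp)$; and by a standard approximation we reduce to the case where $\mu$ is a finitely supported convex combination $\mu=\sum_{p\in A}\lambda_p\,\delta_p$ with $A\subseteq R(\pp)$ finite, $\lambda_p>0$, $\sum\lambda_p=1$. (The reduction to the finitely supported case uses that $\Kt(R(\pp))$ is the closed convex hull of $\{\bpsi_p\}$, together with Theorem~\ref{thm:MusEmbed} to pass from a uniform approximation of $\varphi$ by such $\bpsi_{\mu_k}$ to an identification $\ell_F=\ell_{\bpsi_\mu}$ — actually it is cleaner to build the sequence so its associated Musielak--Orlicz sequence converges fast enough to $\varphi$, exactly as in the proof of Proposition~\ref{prop:DSImpliesOrlicz}.) So the target is: for $\varphi=\sum_{p\in A}\lambda_p\vPsi_p$, construct a disjointly supported sequence $(x_n)$ in $L_{\pp}$ with $\rho_{\pp}\bigl(t\abs{x_n}\bigr)\approx\varphi(t)$ near zero, uniformly in $n$.

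**Construction via Lemma~\ref{lem:disjointsets}.** For each $n$ set $A_n=A$. Choose neighbourhoods $U_{n,p}$ of $p$ shrinking to $\{p\}$; Lemma~\ref{lem:disjointsets} (which needs $\mu$ nonatomic — that is our hypothesis) yields pairwise disjoint sets $(\Omega_{n,p})_{(n,p)\in\Nt}$ with $\mu(\Omega_{n,p})>0$ and $\pp(\Omega_{n,p})\subseteq U_{n,p}$. Shrinking each $\Omega_{n,p}$ further (nonatomicity again), arrange $\mu(\Omega_{n,p})=\lambda_p$ when $p<\infty$, and $\mu(\Omega_{n,p})=\lambda_p$ also when $p=\infty$ with the understanding that on $\Omega_{n,\infty}$ the function $x_n$ will be a constant of modulus $1$. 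Define $x_n=\sum_{p\in A}\chi_{\Omega_{n,p}}$. Then for $t\in[0,1]$,
\[
\rho_{\pp}(tx_n)=\sum_{p\in A\setminus\{\infty\}}\int_{\Omega_{n,p}}t^{\pp(\omega)}\,d\mu(\omega)\;+\;\|t\chi_{\Omega_{n,\infty}}\|_\infty\cdot[\,\infty\notin A\text{ or }t<1\,],
\]
and since $\pp$ on $\Omega_{n,p}$ is within $U_{n,p}$ of $p$, the continuity statement of Lemma~\ref{lem:PotCont} shows $\int_{\Omega_{n,p}}t^{\pp(\omega)}\,d\mu(\omega)$ is uniformly close to $\lambda_p t^p=\lambda_p p\,\vPsi_p(t)$ — up to the harmless constant factor $p^{1/p}$ that is controlled because $A$ is finite and bounded away from zero, exactly as in the passage between $\rho^L_{\pp}$ and $\rho^{L,a}_{\pp}$. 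Choosing the $U_{n,p}$ small enough that $\sum_n\sup_{0\le t\le1}\bigl|\rho^a_{\pp}(tx_n)-\varphi(t)\bigr|<\infty$, Theorem~\ref{thm:MusEmbed} gives $\Sym[L_{\pp},(x_n)]=\ell_{\varphi}=\ell_F$.

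**Main obstacle.** The delicate point is the presence of $\infty\in R(\pp)$: on $\Omega^{\pp}_\infty$ the modular is an $L_\infty$-norm, not an integral, so $\vPsi_\infty=\infty\chi_{(1,\infty)}$ behaves discontinuously and the identity $\vPsi_p(st)=\vPhi_p(s)\vPsi_p(t)$ fails. One must handle the $p=\infty$ piece of $x_n$ separately (it simply contributes a copy of the constant function, whose modular is the $L_\infty$-norm of $t$ on that set), and verify that the $\max$ in the formula for $\rho^L_{\pp}$ still reconstructs $\bpsi_\mu$ near zero — for small $t$ the $L_\infty$-term equals $t$, matching $\lambda_\infty\vPsi_\infty(t)=0$ only in the limit, so one checks instead that $F\in\Ot(R(\pp))$ with $\infty$ in the support forces $F$ to vanish near zero at the appropriate order and the $L_\infty$ contribution is absorbed. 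The rest — the approximation argument reducing to finitely supported $\mu$, and the bookkeeping in applying Theorem~\ref{thm:MusEmbed} — is routine given Lemmas~\ref{lem:compact}, \ref{lem:PotCont}, and Proposition~\ref{prop:DSImpliesOrlicz}.
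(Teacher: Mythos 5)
Your overall architecture is the paper's: the forward implication is Proposition~\ref{prop:DSImpliesOrlicz} combined with subsymmetry, and the converse is built from Lemma~\ref{lem:disjointsets}, approximation of $\varphi\in\Kt(R(\pp))$ by finite convex combinations of the $\bpsi_p$, and Theorem~\ref{thm:MusEmbed}. However, the step ``reduce to the case where $\mu$ is a finitely supported convex combination'' is a genuine gap as written. If $\nu=\sum_{p\in A}\lambda_p\delta_p$ with $A$ finite, then for $0\le t\le 1$ the lowest power dominates and $\bpsi_\nu(t)\approx \vPsi_{p_0}(t)$ with $p_0=\min(A)$, so $\ell_{\bpsi_\nu}=\ell_{p_0}$; hence no identification $\ell_F=\ell_{\bpsi_\nu}$ is possible unless $F\in\Ot(p_0)$, while Lemma~\ref{lem:NoPotencial} shows that $\Ot(R(\pp))$ contains functions belonging to no $\Ot(p)$ whenever $R(\pp)$ has a right-side accumulation point. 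A uniform additive approximation $\norm{\varphi-\bpsi_{\nu_k}}_\infty\le\varepsilon_k$ does not transfer to an equivalence of Orlicz sequence spaces, so your construction with the fixed set $A_n=A$ proves the converse only for $\Sym=\ell_p$, which is precisely the part already known and misses the new content of the statement. The repair is the one you mention parenthetically but do not carry out: let the finite set $A_n$ and the weights $(a_{n,p})_{p\in A_n}$ depend on $n$, with $\max_{0\le t\le1}\abs{\varphi(t)-\sum_{p\in A_n}a_{n,p}\bpsi_p(t)}\le\varepsilon_n$ and $\sum_n\varepsilon_n<\infty$; the $n$-indexed, summable perturbations are exactly what the $\ell_1$ term $(a_n)$ in Theorem~\ref{thm:MusEmbed} absorbs, yielding $\ell_{\Gb}=\ell_\varphi=\ell_F$ for a genuinely non-constant Musielak--Orlicz sequence $\Gb$.

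Two smaller points. First, you cannot in general prescribe $\mu(\Omega_{n,p})=\lambda_p$: Lemma~\ref{lem:disjointsets} only guarantees $\mu(\Omega_{n,p})>0$, and nonatomicity lets you shrink but not enlarge these sets. The clean fix is to take a density $f_{n,p}\ge0$ supported on $\Omega_{n,p}$ with $\int_\Omega f_{n,p}\,d\mu=1$ and set $x_n=\sum_{p\in A_n}(a_{n,p}f_{n,p})^{1/\pp}$, so that the exponent cancels and $\rho^a_{\pp}(t\abs{x_n})=\sum_{p\in A_n} a_{n,p}\int_\Omega\vPsi_{\pp(\omega)}(t)f_{n,p}(\omega)\,d\mu(\omega)$ exactly. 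Second, your ``main obstacle'' concerning $\infty\in R(\pp)$ dissolves at the modular level: with the above $x_n$ one has $x_n=1$ on $\supp(f_{n,\infty})\cap\Omega^{\pp}_\infty$, where $M^a_{\pp}(\omega,t)=\vPsi_\infty(t)=0$ for $0\le t\le1$, matching $\lambda_\infty\vPsi_\infty(t)=0$; only the range $t\le 1$ is relevant for Theorem~\ref{thm:MusEmbed}. Your worry that ``the $L_\infty$-term equals $t$'' concerns the Luxemburg norm, not the modular, and does not enter the argument.
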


\begin{proof}
Bearing in mind Proposition~\ref{prop:DSImpliesOrlicz}, it suffices to prove that for any Orlicz function $F$ such that $F|_{[0,1]}\in\Kt(R)$, $L_{\pp}^0$ has a disjointly supported sequence $\XB$ with $L_{\pp}[\XB]=\ell_F$. Choose $(\varepsilon_n)_{n=1}^\infty$ with $\sum_{n=1}^\infty \varepsilon_n<\infty$.
Let $(F_n)_{n=1}^\infty$ be such that, for all $n\in\NN$, $F_n$ belongs to convex hull of $\{F_p\colon p\in R(\pp)\}$, and
\[
\max_{0\le t\le 1} \abs{F(t)-F_n(t)}\le \varepsilon_n.
\]
Let $(A_n)_{n=1}^\infty$ be a sequence of finite subsets of $R(\pp)$ such that for each $n\in\NN$ there is
$(a_{n,p})_{p\in A_n}$ in $(0,\infty)$ such that
\[
F_n=\sum_{p\in A_n} a_{n,p} \vPsi_p, \quad \sum_{p\in A}a_{n,p}=1.
\]
By Lemma~\ref{lem:PotCont}, for each $n\in\NN$ and $p\in A_n$ there is a neibourghood $U_{n,p}$ of $p$ such that $\norm{\bpsi_p-\bpsi_q}\le \varepsilon_n$ for all $q\in U_{n,p}$. Let $(\Omega_{n,p})_{(n,p)\in\Nt}$ be the family of sets provided by Lemma~\ref{lem:disjointsets}. Choose, for each $(n,p)\in\Nt$, $f_{n,p}\colon \Omega \to [0,\infty)$ with $\supp(f_{n,p})=\Omega_{n,p}$ and $\int_\Omega f_{n,p}\, d\mu=1$. Set
\[
G_{n,p}\colon[0,\infty) \to[0,\infty), \quad t\mapsto \int_{\Omega} \vPsi_{\pp(\omega)}(t) f_{n,p}(\omega)\, d\mu(\omega).
\]
Note that, if $0\le t \le 1$,
\[
G_{n,p}(t)= \int_{\Omega\setminus \Omega^{\pp}_\infty} \vPsi_{\pp(\omega)}(t) f_{n,p}(\omega)\, d\mu(\omega).
\]
Since
\[
\max_{0\le t \le 1} \abs{G_{n,p}(t)-\bpsi_p(t)}\le \varepsilon_n,
\]
the Musielak-Orlicz sequence $\Gb=(G_n)_{n=1}^\infty$ given by
\[
G_n=\sum_{p\in A_n} a_{n,p} G_{n,p}
\]
satisfies $\max_{0\le t \le 1} \abs{G_n(t)-F(t)} \le 2 \varepsilon_n$ for all $n\in\NN$. By Theorem~\ref{thm:MusEmbed},
$\ell_F=\ell_{\Gb}$. If we set
\[
x_n=\sum_{p\in A_n}\enpar{ a_{n,p} f_{n,p}}^{1/\pp},
\]
then for every $n\in\NN$ and $0\le t \le 1$ we have
\[
\int_\Omega N_{\pp} \enpar{t x_n(\omega)} d\mu(\omega)=\sum_{p\in A_n} \int_{\Omega\setminus \Omega^{\pp}_\infty} \vPsi_{\pp(\omega)}(t) a_{n,p} f_{n,p}(\omega) \, d\mu(\omega)=G_n(t).
\]
Consequently, $\XB=(x_n)_{n=1}^\infty$ is a pairwise disjointly supported sequence with $L_{\pp}[\XB]=\ell_{\Gb}$.
\end{proof}

\begin{corollary}[see \cite{HR2012}*{Theorem~3.5}]
Let $q\in(0,\infty]$ and $\pp$ be a variable exponent over a $\sigma$-finite measure space.
Assume that $\pp^{-}>0$. Then, $L_{\pp}$ has a disjointly supported sequence equivalent to the unit vector system of $\ell_q$ if and only if $q\in R(\pp)$.
\end{corollary}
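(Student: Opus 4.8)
The plan is to deduce this corollary directly from Corollary~\ref{cor:subsymdisjoint} together with the structural reductions about variable exponent Lebesgue spaces collected in Section~\ref{sec:MO}, plus Corollary~\ref{cor:34}. First I would observe that the sequence space $\Sym$ under consideration here is $\ell_q$ itself, which is subsymmetric (its unit vector system is symmetric), so the hypothesis of Corollary~\ref{cor:subsymdisjoint} is met. The only gap to bridge is that Corollary~\ref{cor:subsymdisjoint} is stated for \emph{nonatomic} $\sigma$-finite measure spaces, whereas the present statement allows an arbitrary $\sigma$-finite measure space, i.e.\ one with an atomic part as well.

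To handle the general $\sigma$-finite case, I would invoke the decomposition recorded earlier: any variable exponent Lebesgue space is isomorphic to $L_{\pp_0}\oplus \ell_{\pq_0}$ for a nonatomic probability space part $\pp_0$ and a Bourgin--Nakano part $\pq_0$ on a countable set $J$, and moreover $R(\pp)=R(\pp_0)\cup R(\pq_0)$ (the essential range of $\pp$ is the union of the essential ranges of its restrictions to the atomic and nonatomic parts). For the forward implication, if $\ell_q$ embeds disjointly into $L_{\pp}\simeq L_{\pp_0}\oplus\ell_{\pq_0}$, a gliding-hump / disjointification argument shows the disjoint copy can be pushed, after passing to a subsequence, into one of the two summands; in either summand Proposition~\ref{prop:DSImpliesOrlicz} gives $\Sym[\,\cdot\,,\YB]=\ell_F$ for some $F\in\Ot(R(\pp_0))$ or $F\in\Ot(R(\pq_0))$, and since this $\ell_F$ is $\ell_q$, Corollary~\ref{cor:34} forces $\vPsi_q\in\Ot(R(\pp_0))$ (resp.\ $\Ot(R(\pq_0))$), hence $q\in R(\pp_0)\subseteq R(\pp)$ (resp.\ $q\in R(\pq_0)\subseteq R(\pp)$). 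For the reverse implication, if $q\in R(\pp)$ then $q\in R(\pp_0)$ or $q\in R(\pq_0)$; in the nonatomic case apply Corollary~\ref{cor:subsymdisjoint} with $\Sym=\ell_q$ (note $\ell_q=\ell_{\vPsi_q}$ and $\vPsi_q\in\Ot(\{q\})\subseteq\Ot(R(\pp_0))$ since $q\in R(\pp_0)$, using Corollary~\ref{cor:34}); in the atomic case, $q\in R(\pq_0)$ means there is a subsequence of atoms on which $\pq_0$ equals, or converges to, $q$, and the corresponding normalized indicator functions span an isometric (resp.\ almost-isometric) copy of $\ell_q$ inside $\ell_{\pq_0}$. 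In all cases the resulting disjointly supported sequence in a complemented summand is a disjointly supported sequence in $L_{\pp}$ equivalent to the $\ell_q$-basis.

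The main obstacle I anticipate is the disjointification step in the forward direction: given a disjointly supported semi-normalized sequence in the direct sum $L_{\pp_0}\oplus\ell_{\pq_0}$, one must argue that a subsequence is (equivalent to a sequence) supported essentially in a single summand. The standard tool is that a disjointly supported semi-normalized sequence whose ``mass'' does not concentrate in either coordinate would, by subsymmetry of $\ell_q$ and the gliding hump lemma, produce a contradiction with the fact that $L_{\pp_0}$ and $\ell_{\pq_0}$ have incompatible lattice convexity/concavity behaviour only when $R(\pp_0)\cap R(\pq_0)$ is small; a cleaner route is simply to split each $x_n=x_n'\oplus x_n''$ and note $\ell_q=\Sym[\,\cdot\,,(x_n)]$ sits between $\Sym[\,\cdot\,,(x_n')]$ and $\Sym[\,\cdot\,,(x_n'')]$ in the sense of the Musielak--Orlicz description, then pass to whichever side carries a semi-normalized subsequence. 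Everything else is bookkeeping with the already-established machinery; no genuinely new idea beyond Corollary~\ref{cor:subsymdisjoint} is needed, which is why the authors phrase it as a corollary recovering \cite{HR2012}*{Theorem~3.5}.
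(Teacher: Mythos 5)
Your treatment of the nonatomic case is exactly the paper's argument: combine Corollary~\ref{cor:subsymdisjoint} with Corollary~\ref{cor:34}. In fact, for the ``only if'' direction you do not need any gliding-hump or splitting across the direct sum $L_{\pp_0}\oplus\ell_{\pq_0}$: Proposition~\ref{prop:DSImpliesOrlicz} is already stated for an arbitrary $\sigma$-finite measure space, so a disjointly supported semi-normalized copy of the $\ell_q$-basis has a subsequence spanning $\ell_F$ with $F\in\Ot(R(\pp))$, and since every subsequence of the $\ell_q$-basis spans $\ell_q$, Corollary~\ref{cor:34} gives $q\in R(\pp)$ directly. You are also right to flag that Corollary~\ref{cor:subsymdisjoint} carries a nonatomicity hypothesis which the present statement, as written, does not.

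However, your repair of the ``if'' direction in the atomic case contains a genuine error. The claim that $q\in R(\pq_0)$ forces a subsequence of atoms on which $\pq_0$ equals, or converges to, $q$ is false: under the counting measure a \emph{single} atom already places its exponent value in $\supp(\pq_0(\mu))=R(\pq_0)$. Concretely, take $\pp\colon\NN\to(0,\infty]$ with $\pp(1)=3$ and $\pp(n)=2$ for $n\ge 2$. Then $3\in R(\pp)$, but any disjointly supported sequence in $\ell_{\pp}$ has at most one term whose support meets the first atom, so after discarding it one is left with a disjointly supported semi-normalized sequence in $\ell_2$, which is equivalent to the $\ell_2$-basis, not the $\ell_3$-basis. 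Thus the ``if'' direction is genuinely false for atomic measures and cannot be patched; the corollary has to be read with the nonatomic hypothesis inherited from Corollary~\ref{cor:subsymdisjoint} (consistent with \cite{HR2012}*{Theorem~3.5}, which concerns exponents on $[0,1]$). In the purely atomic setting the correct invariant is the set $A(\pp)$ of limit points of $\pp$ rather than $R(\pp)$, as the later results of Section~\ref{sec:complemented} and the example on convergent exponent sequences make clear.
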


\begin{proof}
Just combine Corollary~\ref{cor:subsymdisjoint} with Corollary~\ref{cor:34}.
\end{proof}

Given a function space $\XX$, the following are equivalent.
\begin{itemize}
\item $\XX$ fails to be absolutely continuous.
\item $\ell_\infty$ embeds into $\XX$.
\item $\ell_\infty$ disjointly embeds into $\XX$.
\end{itemize}
Since any separable Banach space linearly embeds into $\ell_\infty$, the subsymmetric embedding problem for locally cxonvex K\"othe spaces split into two issues. Namely,
\begin{itemize}
\item determining which nonseparable subsymmetric spaces embed into a given non absolutely continuous K\"othe space, and
\item determining which separable subsymmetric spaces embed into a given absolutely continuous K\"othe space.
\end{itemize}
We give a result for variable exponent Lebesgue spaces framed in the latter problem.

\begin{theorem}\label{thm:MainA}
Let $\pp$ be a variable exponent over a $\sigma$-finite nonatomic measure space. Suppose that $\pp^{-}\ge 2$ and $q:=\pp^{+}<\infty$.
Let $\Sym$ be a subsymmetric sequence space. Then, $\Sym$ embeds into $L_{\pp}(\mu)$ if and only if $\Sym=\ell_2$ or $\Sym=\ell_F$ for some $F\in\Ot(R(\pp))$.
\end{theorem}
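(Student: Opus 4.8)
The "if" direction is essentially done: if $\Sym = \ell_F$ with $F \in \Ot(R(\pp))$, then by Corollary~\ref{cor:subsymdisjoint} $\Sym$ disjointly embeds into $L_{\pp}$; and since $\pp^{-} \ge 2$, $L_{\pp}$ is lattice $2$-convex, so by Pe{\l}czy\'{n}ski's classical argument (via Khintchine-type inequalities) $\ell_2$ embeds into $L_{\pp}$ — indeed the Rademacher-like span of a disjointly supported normalized sequence works, using the $2$-convexity on one side and $q$-concavity on the other. So the content is the "only if" direction.

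First I would reduce to the disjointly supported case. Suppose $\Sym$ embeds into $L_{\pp}$ via $T$; let $\XB = (T\ee_n)_{n=1}^\infty$. Since $L_{\pp}$ is absolutely continuous (as $\pp^{-} > 0$, $\pp^{+} < \infty$), it is minimal, and the subsymmetric (hence unconditional) basic sequence $\XB$ is semi-normalized in the absolutely continuous, $q$-concave space $L_{\pp}$. The plan is to invoke a Kadec--Pe{\l}czy\'nski-type dichotomy: either $\XB$ has a subsequence equivalent to a disjointly supported sequence (a "gliding hump" towards disjointness, controlled by absolute continuity and the fact that bounded sets in $L_{\pp}$ have equi-integrable-type behaviour away from being disjoint), in which case Proposition~\ref{prop:DSImpliesOrlicz} gives $\Sym = \ell_F$ with $F \in \Ot(R(\pp))$; or the sequence "loads mass on a fixed-measure piece", and then $q$-concavity forces $\XB$ to be equivalent to the $\ell_2$-basis. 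Concretely, in the non-disjoint alternative one extracts a subsequence $(x_{n_k})$ and a set $E$ of positive measure on which the $x_{n_k}$ are, after small perturbation, uniformly bounded and behave like an $L_2(E)$-bounded sequence; combining the lower $2$-convexity estimate (from $\pp^{-} \ge 2$) with the upper $q$-concavity estimate pins the sequence to be $2$-equivalent to the unit vector basis of $\ell_2$. The subsymmetry of $\Sym$ then propagates this equivalence from the subsequence back to the whole basis, so $\Sym = \ell_2$.

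The main obstacle will be making the dichotomy rigorous in the variable-exponent setting: one does not have a single exponent, so the usual "$L_p$ subsequence splitting" of Kadec--Pe{\l}czy\'nski must be run through the Musielak--Orlicz modular $\rho_{\pp}$ and the comparison $\pp^{-} \le \pp(\omega) \le \pp^{+} = q$. I expect the right tool is a truncation argument: write $x_n = x_n \chi_{\{|x_n| \le M_n\}} + x_n \chi_{\{|x_n| > M_n\}}$ for suitable levels, use absolute continuity to show that if the "large part" does not vanish along a subsequence one can glide to genuine disjointness (landing in Proposition~\ref{prop:DSImpliesOrlicz}), while if the "large part" does vanish then the sequence is essentially supported on a set where it is uniformly bounded, hence — since $L_{\pp}$ restricted to a finite-measure set with $\pp^{-} \ge 2$ sits inside $L_2$ and contains $L_q$ — comparable to an $L_2$-bounded semi-normalized unconditional sequence, which by $2$-convexity and $q$-concavity of the ambient space is $\ell_2$-equivalent. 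One should double check that the $\ell_2$-conclusion is genuinely forced and not merely "contains $\ell_2$", i.e. that there is no room for an intermediate Orlicz space; this is exactly where $\pp^{-} \ge 2$ (not just $\pp^{-} \ge 1$) is used, since $2$-convexity plus $q$-concavity squeezes the Orlicz function $F$ governing $\Sym$ between $t^q$ and $t^2$ near $0$ in a way compatible only with $\ell_2$ once one also knows, from the first alternative failing, that $F$ is equivalent to $\vPsi_2$. Finally, I would record that these two alternatives are not mutually exclusive — $\ell_2 = \ell_{\vPsi_2}$ and $2 \in R(\pp)$ since $\pp^{-} \ge 2$ means $2 = \pp^{-} \in R(\pp)$ — so the statement reads cleanly as a disjunction.
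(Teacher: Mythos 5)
Your overall architecture matches the paper's: reduce the ``only if'' direction to the disjointly supported case and then invoke Corollary~\ref{cor:subsymdisjoint}, with $\ell_2$ as the exceptional alternative. The difference is that the paper does not prove the reduction: it cites \cite{AnsorenaBello2025b}*{Theorem 4.2}, which states precisely that a subsymmetric basic sequence in such a space, if not equivalent to the $\ell_2$-basis, is equivalent to a disjointly supported sequence. You propose to reprove this via a Kadec--Pe{\l}czy\'{n}ski dichotomy, and that is where the gap lies. The disjointifiable alternative of your dichotomy is fine (a subsequence small in measure, absolute continuity, gliding hump). The other alternative is not: knowing that the vectors $x_n$ individually satisfy $\norm{x_n}_2\approx\norm{x_n}_{\pp}$ (your ``loads mass on a fixed-measure piece'' condition) does not by itself make the sequence equivalent to the $\ell_2$-basis, nor even make the $L_2$- and $L_{\pp}$-norms equivalent on the closed span --- subspaces of $L_p$, $p>2$, isomorphic to $\ell_2\oplus\ell_p$ exhibit unconditional bases all of whose vectors lie in a fixed Kadec--Pe{\l}czy\'{n}ski set $M(\varepsilon)$ without the span being Hilbertian. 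Ruling this hybrid behaviour out is exactly where subsymmetry must be used in a serious way (an upper $2$-estimate from $2$-convexity together with the Khintchine--Maurey square-function equivalence, plus a case analysis on whether a lower $2$-estimate holds), and your sketch asserts the conclusion rather than proving it. Since this is the entire content of the cited Theorem~4.2, the proposal as written has a genuine hole at the decisive step.

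Two further concrete errors. First, in the ``if'' direction, the ``Rademacher-like span of a disjointly supported normalized sequence'' cannot produce $\ell_2$: by Proposition~\ref{prop:DSImpliesOrlicz} a disjointly supported semi-normalized sequence spans $\ell_F$ with $F\in\Ot(R(\pp))$, which equals $\ell_2$ only when $2\in R(\pp)$ (Corollary~\ref{cor:34}). The correct construction, as in the paper, is a genuine Rademacher sequence on a nonatomic finite-measure piece, squeezed via $L_q\subseteq L_{\pp}\subseteq L_2$ and Khintchine's inequalities in $L_2$ and $L_q$. Second, your closing remark that the two alternatives overlap because ``$\pp^-\ge 2$ means $2=\pp^-\in R(\pp)$'' is false: $\pp^-$ may exceed $2$, in which case $2\notin R(\pp)$ and $\ell_2$ is not of the form $\ell_F$ with $F\in\Ot(R(\pp))$; indeed this is precisely why $\ell_2$ must be listed separately in the statement of the theorem.
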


\begin{proof}
Assume that a symmetric sequence space $\Sym$ other than $\ell_2$ embeds into $L_{\pp}(\mu)$. By \cite{AnsorenaBello2025b}*{Theorem 4.2}, $L_{\pp}(\mu)$ contains a disjointly supported sequence equivalent to the canonical basis of $\Sym$. By Corollary~\ref{cor:subsymdisjoint}, $\Sym=\ell_F$ for some $F\in\Ot(R(\pp))$.

Corollary~\ref{cor:subsymdisjoint} also gives that $\ell_F$ embeds into $L_{\pp}(\mu)$ for any $F\in\Ot(R(\pp))$. We close the proof by noticing that since $L_2(\mu)\subseteq L_{\pp}(\mu)\subseteq L_q(\mu)$, any Rademacher sequence over $\Omega$ is equivalent to the canonical $\ell_2$-basis by Khintchine's inequalities.
\end{proof}

To address generalizing Theorem~\ref{thm:MainA} to the case when $\pp^{-}<2$, we should previously know the subsymmetric structure of $L_p$, $1\le p<2$.

The following elementary lemma implies that the situation in the atomic case is somehow different. Note that the space of simple integrable functions over $\NN$ endowed with the counting measure is the linear space $c_{00}$ of all eventually null sequences.

\begin{lemma}\label{lem:Dilworth}
Let $\XX$ be an absolutely continuous sequence space and $\Sym$ a subsymmetric sequence space. Then, $\Sym$ embeds into $\XX$ if and only if it disjointly embeds into $\XX$.
\end{lemma}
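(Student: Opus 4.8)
The plan is to prove the nontrivial implication: if a subsymmetric sequence space $\Sym$ embeds into an absolutely continuous sequence space $\XX$, then it disjointly embeds. The reverse implication is trivial, since a disjoint embedding is in particular an embedding. So assume $T\colon\Sym\to\XX$ is an isomorphic embedding. The key structural fact I would exploit is that $\XX$ is a \emph{sequence} space (a function space over $\NN$ with the counting measure) and is \emph{absolutely continuous}, hence minimal, so $\XX=\XX_0$ is the closed linear span of $c_{00}$. The strategy is a standard gliding-hump / perturbation argument: produce a subsequence $(\ee_{n_k})_{k=1}^\infty$ whose images $T(\ee_{n_k})$ are, up to a small perturbation, supported on pairwise disjoint finite blocks of coordinates, and then invoke subsymmetry of $\Sym$ to transfer the disjoint embedding of the subsequence back to an embedding of the whole space.

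\textbf{Key steps.} First, I would normalize: since $\Sym$ is subsymmetric it is in particular semi-normalized (its unit vectors have $\norm{\ee_n}_\Sym\approx 1$), so the vectors $y_n:=T(\ee_n)$ form a semi-normalized basic sequence in $\XX$, say with $0<a\le\norm{y_n}_\XX\le b<\infty$ and with $T$ an isomorphism onto its range, $c\norm{(a_n)}_\Sym\le\norm{\sum a_n y_n}_\XX\le C\norm{(a_n)}_\Sym$. Second — and this is where absolute continuity enters — for each $n$ the vector $y_n\in\XX=\XX_0$ can be approximated in norm by a finitely supported vector: given $\varepsilon>0$ there is a finite set $S_n\subseteq\NN$ with $\norm{y_n-y_n\chi_{S_n}}_\XX<\varepsilon$. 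Here I would want the quantitative statement that tails of a fixed vector go to zero in the lattice norm, which is exactly property \ref{FQN:Abs} applied to $(|y_n|\chi_{\NN\setminus\{1,\dots,m\}})_{m}$. Third, the gliding hump: I would recursively choose indices $n_1<n_2<\cdots$ and finite sets $E_1<E_2<\cdots$ (successive intervals of $\NN$) together with vectors $u_k:=y_{n_k}\chi_{E_k}$ such that $\norm{y_{n_k}-u_k}_\XX<\varepsilon_k$ for a rapidly decreasing sequence $(\varepsilon_k)$. Concretely, having chosen $E_1,\dots,E_{k-1}$, pick $m$ larger than $\max(E_{k-1})$; by \ref{FQN:Abs} there is $N$ such that for all $n\ge N$ one can't control the tail uniformly, so instead I pass to a subsequence: by a diagonal/Ramsey argument one may assume $\sup_n\norm{y_n\chi_{\{1,\dots,m\}}}_\XX$ is small enough once $m$ is fixed? — no, that is false in general. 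The correct route is: having fixed $E_{k-1}$, the vector we must place next is \emph{some} $y_{n_k}$, and we are free to choose $n_k$; but there is no reason a single fixed $m$ works for all $n$. So the right mechanism is the one from \cite{AnsorenaBello2025b}*{Theorem 4.2} cited in the paper: in an absolutely continuous lattice, any semi-normalized basic sequence has a subsequence equivalent to a disjointly supported sequence. I would therefore reduce directly to that theorem.

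\textbf{Cleanest route.} Rather than redo the gliding hump, I would argue: $\Sym$ embeds into $\XX$, so $\XX$ contains a semi-normalized basic sequence $(y_n)$ equivalent to the subsymmetric basis $(\ee_n)$ of $\Sym$. Since $\XX$ is absolutely continuous, by \cite{AnsorenaBello2025b}*{Theorem 4.2} there is a subsequence $(y_{n_k})$ equivalent to a pairwise disjointly supported sequence $(z_k)$ in $\XX\setminus\{0\}$. By subsymmetry, $(\ee_{n_k})$ is equivalent to $(\ee_k)$, i.e.\ to the full basis of $\Sym$; chaining the equivalences, $(z_k)$ is equivalent to the unit vector system of $\Sym$. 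Hence $\Sym=\Sym[\XX,\Zt]$ where $\Zt=(z_k)$, which is precisely the statement that $\Sym$ disjointly embeds into $\XX$. The main obstacle is making sure the cited disjointification theorem applies in this quasi-Banach lattice generality (locally nonconvex spaces are allowed here), and that "absolutely continuous" is exactly the hypothesis it needs; if one prefers a self-contained argument, the obstacle is the gliding-hump step above, which requires first passing to a subsequence along which the "heads" $y_n\chi_{\{1,\dots,m\}}$ converge coordinatewise (possible since $\XX\subseteq L(\mu)=\FF^\NN$ with pointwise convergence and coordinates are bounded), subtracting the limit, and then using \ref{FQN:Abs} together with \ref{FQN:RFatou} to control the resulting perturbation — a routine but slightly delicate $\varepsilon$-bookkeeping that the appeal to \cite{AnsorenaBello2025b} lets us bypass.
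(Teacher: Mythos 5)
The trivial direction is fine, and your final sentence does gesture at the right mechanism, but the route you actually commit to has a genuine gap. Your ``cleanest route'' rests on reading \cite{AnsorenaBello2025b}*{Theorem 4.2} as saying that in an absolutely continuous lattice \emph{every} semi-normalized basic sequence has a subsequence equivalent to a disjointly supported one. That statement is false for nonatomic lattices: the Rademacher functions in $L_p[0,1]$, $p>2$, form a semi-normalized basic sequence spanning $\ell_2$, while every semi-normalized disjointly supported sequence there spans $\ell_p$; and $L_p[0,1]$ is absolutely continuous. Consistently with this, the paper invokes that theorem only for $\Sym$ \emph{other than} $\ell_2$ (it is a Kadets--Pe{\l}czy\'nski-type dichotomy), so it cannot carry the full weight of the present lemma, which must in particular cover $\Sym=\ell_2$ embedding into a sequence space. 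The point you are missing is that the lemma is not a special case of the nonatomic theorem: it is an elementary fact whose proof uses that $\XX$ lives over $\NN$, i.e.\ that the underlying measure is purely atomic, so that ``heads'' are finite-dimensional and can be killed by a compactness argument rather than by excluding $\ell_2$.

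The paper's actual argument is the one you half-describe at the end, with one detail you get wrong. Given $(x_n)$ equivalent to the unit vector system of $\Sym$, a Cantor diagonal argument yields a subsequence $(x_{n_k})$ converging coordinatewise; one then passes to the \emph{consecutive differences} $\enpar{x_{n_{2k-1}}-x_{n_{2k}}}_k$, which converge to zero pointwise, and applies the gliding hump (using \ref{FQN:Abs} to make tails small) to extract a further subsequence equivalent to a disjointly supported sequence. Subsymmetry enters exactly here: the difference sequence of a subsymmetric basic sequence is equivalent to the original basis, so the disjoint sequence obtained is still equivalent to the unit vector system of $\Sym$. Your alternative of ``subtracting the pointwise limit $y$'' does not obviously work: $\sum_k a_k (x_{n_k}-y)=\sum_k a_k x_{n_k}-\enpar{\sum_k a_k}y$ need not be equivalent to $\sum_k a_k x_{n_k}$ when $y\neq 0$, whereas the consecutive-difference trick sidesteps this entirely. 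With that substitution your sketch becomes the paper's proof.
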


\begin{proof}
Assume that a sequence $(x_n)_{n=1}^\infty$ of $\XX$ is equivalent to the unit vector system on $\XX$. Applying the diagonal Cantor's technique we obtain an increasing sequence $(n_k)_{k=1}^\infty$ in $\NN$ such that $\enpar{x_{n_k}}_{k=1}^\infty$ converge pointwise. Consequently,
\[
\XB:=\enpar{x_{n_{2k-1}}-x_{n_{2k}}}_{k=1}^\infty
\]
converges to zero pointwise. Applying the gliding-hump technique, we infer that a subsequence of $\YB$ of $\XB$ is equivalent to a disjointly supported sequence of $\XX$. Since $\YB$ is equivalent to the unit vector system of $\Sym$, we are done.
\end{proof}

Given a sequence $\pp\colon \NN\to (0,\infty]$ we denote by $A(\pp)$ the set of all limit points of $\pp$. Note that any accumulation point of $R(\pp)$ is a limit point of $\pp$, but the converse does not hold. If $\pp=(p_n)_{n=1}^\infty$ and we set $\pp_m=(p_{n+m-1})_{n=1}^\infty$ for all $m\in\NN$, then
\[
A(\pp)= \bigcap_{m=1}^\infty R(\pp_m).
\]

\begin{theorem}
Let $\pp=(p_n)_{n=1}^\infty$ be a variable exponent with $\pp^{-}>0$ and $\pp^{+}<\infty$. Suppose a subsymmetric sequence space $\Sym$ embeds into $\ell_{\pp}$. Then, there is $r\in A(\pp)$ such that $\Sym=\ell_F$ for some Orlicz function $F$ such that $F\in\Ot(R(\pp)\cap[r,s])$ for all $s>r$.
\end{theorem}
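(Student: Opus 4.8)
The plan is to first reduce to the disjointly supported case via Lemma~\ref{lem:Dilworth}, and then to run an argument parallel to the proof of Proposition~\ref{prop:DSImpliesOrlicz}, the new feature being that the atomic nature of the measure forces the relevant ``profile measure'' to concentrate on $A(\pp)$. Since $\pp^->0$ and $\pp^+<\infty$, the Musielak--Orlicz function $M^a_{\pp}=\vPsi_{\pp(\cdot)}$ is $\pp^+$-concave and $\pp^-$-convex, hence satisfies \ref{it:QBL} and \ref{it:Doubling}, so $\ell_{\pp}=L_{M^a_{\pp}}=H_{M^a_{\pp}}$ is absolutely continuous. By Lemma~\ref{lem:Dilworth}, $\Sym$ disjointly embeds into $\ell_{\pp}$: there is a pairwise disjointly supported semi-normalized sequence $\XB=(x_n)_{n=1}^\infty$ in $\ell_{\pp}\setminus\{0\}$ with $\Sym=\Sym[\ell_{\pp},\XB]=\ell_{\Fb}$ where, computing in the equivalent space $L_{M^a_{\pp}}$ and using that $\pp^+<\infty$ (so no exponent equals $\infty$ and $\vPsi_p(st)=\vPhi_p(s)\vPsi_p(t)$),
\[
F_n(t)=\sum_{k\in\supp(x_n)}\vPsi_{p_k}\enpar{t\abs{x_n(k)}}=\bpsi_{\mu_n}(t),\qquad 0\le t\le 1,
\]
with $\mu_n=\sum_{k\in\supp(x_n)}\abs{x_n(k)}^{p_k}\delta_{p_k}$ a finite measure on $R:=R(\pp)\subseteq[\pp^-,\pp^+]$. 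Replacing each $x_n$ by $x_n/\rho^{L,a}_{\pp}(x_n)$ (a scaling by a factor $\approx 1$, so $\Sym$ is unchanged up to isomorphism) we may assume, by Lemma~\ref{lem:AttainsNorm}\ref{it:Attains} applied in $H_{M^a_{\pp}}$, that $\rho^a_{\pp}(x_n)=F_n(1)=\mu_n(R)=1$; that is, $\mu_n\in\Pt(R)$.

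Next I would extract subsequences. Because the $x_n$ are pairwise disjointly supported and $\NN$ is countable, only finitely many of them meet any fixed finite subset of $\NN$, so passing to a subsequence we may assume $\supp(x_n)\subseteq\{m_n,m_n+1,\dots\}$ with $m_n\uparrow\infty$; hence $\supp(\mu_n)\subseteq R(\pp_{m_n})$. Since $R$ is compact, $\Pt(R)$ is weak* compact, so passing to a further subsequence we get $\mu_n\to\mu$ weak* with $\mu\in\Pt(R)$. The crucial point is that $\supp(\mu)\subseteq A(\pp)=\bigcap_{m}R(\pp_m)$: given $p_0\notin A(\pp)$, choose $m_0$ with $p_0\notin R(\pp_{m_0})$ and an open $V\ni p_0$ disjoint from the closed set $R(\pp_{m_0})$; for all large $n$ we have $m_n\ge m_0$, hence $\supp(\mu_n)\subseteq R(\pp_{m_n})\subseteq R(\pp_{m_0})$ is disjoint from $V$ and $\mu_n(V)=0$, so $\mu(V)\le\liminf_n\mu_n(V)=0$ by the portmanteau inequality for open sets. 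Therefore $r:=\min(\supp(\mu))$ belongs to $A(\pp)$, and $r\ge\pp^->0$.

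It remains to identify $\Sym$ with an Orlicz sequence space and to locate the Orlicz function. By the continuity of the map $E$ in the proof of Lemma~\ref{lem:compact}, $F_n=\bpsi_{\mu_n}\to\bpsi_{\mu}$ in $\Ct([0,1])$; passing to one last subsequence, $\sum_n\norm{F_n-\bpsi_{\mu}}_{\Ct([0,1])}<\infty$. Using Lemma~\ref{lem:extension}, extend $\bpsi_{\mu}$ to an Orlicz function $F\in\Ot(R)$ with $F|_{[0,1]}=\bpsi_{\mu}$. Since $\bpsi_{\mu}$ and every $F_n$ are nondecreasing and $\ge(1/2)^{\pp^+}/\pp^+>0$ on $[1/2,1]$ (the $\mu_n,\mu$ are probability measures on $R\subseteq(0,\pp^+]$), two applications of Theorem~\ref{thm:MusEmbed} — with $b=C=1$, $a_n=\norm{F_n-\bpsi_{\mu}}_{\Ct([0,1])}\in\ell_1$, and $\delta=(1/2)^{\pp^+}/\pp^+$, so that $\{t:F_n(t)<\delta\}$ and $\{t:F(t)<\delta\}$ lie in $[0,1/2)$ where $\abs{F_n-F}\le a_n$ — yield $\ell_{\Fb}=\ell_F$. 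As all the passages to subsequences preserve $\Sym[\ell_{\pp},\XB]$ up to isomorphism by subsymmetry, $\Sym=\ell_F$. Finally, since $F\approx\bpsi_{\mu}$ near $0$ and $r=\min(\supp(\mu))$, Lemma~\ref{lem:minsupp} gives $F\in\Ot(R,r^+)=\bigcap_{s>r}\Ot\bigl(R(\pp)\cap[r,s]\bigr)$ with $r\in A(\pp)$, which is exactly the assertion.

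The main obstacle is the support-localization step of the middle paragraph: proving that the weak* limit $\mu$ of the profile measures $\mu_n$ is supported on the set $A(\pp)$ of limit points of $\pp$. This is precisely where the discreteness of the measure (forcing the supports of the $x_n$ to drift to infinity) enters, and it is what makes the conclusion weaker than in the nonatomic Corollary~\ref{cor:subsymdisjoint}. The other ingredients are routine: the reduction via Lemma~\ref{lem:Dilworth}, weak* compactness of $\Pt(R)$, a diagonal argument, and two applications of Musielak's embedding theorem; the only care required is to order the finitely many passages to subsequences so that normalization ($\mu_n\in\Pt(R)$), the drift $m_n\to\infty$, weak* convergence, and summable $\Ct([0,1])$-convergence hold simultaneously, and to check the elementary uniform lower bounds that supply the constant $\delta$ in Theorem~\ref{thm:MusEmbed}.
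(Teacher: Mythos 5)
Your proof is correct in substance but takes a genuinely different route from the paper's. The paper's argument is a short, modular deduction: it applies Lemma~\ref{lem:Dilworth}, Proposition~\ref{prop:DSImpliesOrlicz} and Proposition~\ref{prop:OrliczGoesToExtreme} to each tail exponent $\pp_m$, obtaining for every $m$ an index $r_m\in R(\pp_m)$ with $\Sym=\ell_{F_m}$ and $F_m\in\Ot(R(\pp_m),r_m^+)$; since all the $F_m$ generate the same Orlicz space, Lemma~\ref{lem:tellapart} forces the $r_m$ to coincide with a single $r\in\bigcap_m R(\pp_m)=A(\pp)$. You instead re-run the profile-measure analysis from scratch: you attach to each normalized disjointly supported $x_n$ a discrete measure $\mu_n$, observe that pairwise disjointness forces $\supp(x_n)$ to drift to infinity so that $\supp(\mu_n)\subseteq R(\pp_{m_n})$, and use weak* compactness together with the open-set portmanteau inequality to place the support of the limit measure $\mu$ inside $A(\pp)$, after which Lemma~\ref{lem:minsupp} finishes. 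Your route is longer but yields slightly more: $F\approx\bpsi_\mu$ with the \emph{whole} support of $\mu$ contained in $A(\pp)$, not merely its minimum. One small slip to correct: with $\mu_n=\sum_{k}\abs{x_n(k)}^{p_k}\delta_{p_k}$, the normalization $\rho^a_{\pp}(x_n)=F_n(1)=1$ gives $\int_R p^{-1}\,d\mu_n(p)=1$, not $\mu_n(R)=1$, so the $\mu_n$ are not probability measures but have total mass in $[\pp^-,\pp^+]$. This is harmless --- weak* compactness still applies on that norm-bounded set, the limit $\mu$ is nonzero, your constant $\delta$ acquires a factor $\pp^-$, and Lemma~\ref{lem:minsupp} is applied to $\mu/\mu(R)$ --- but it should be stated accurately.
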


\begin{proof}
Set $\pp_m=(p_{n+m-1})_{n=1}^\infty$ for all $m\in\NN$. By Lemma~\ref{lem:Dilworth}, Proposition~\ref{prop:DSImpliesOrlicz} and Proposition~\ref{prop:OrliczGoesToExtreme}, for each $m\in\NN$ there is $r_m\in R(\pp_m)$, and an Orlicz function $F_m$ such that $\Sym=\ell_{F_m}$ and $F_m\in \Ot(R(\pp_m) \cap [r_m,s])$ for all $s>r_m$. By Corollary~\ref{cor:doubling}, there is an Orlicz function $F$ such that $F_m\approx F$ for all $m\in\NN$. By Lemma~\ref{lem:tellapart}, there is $r\in(0,\infty]$ such that $r=r_m$ for all $m\in\NN$.
\end{proof}

\begin{example}
Let $\pp=(p_n)_{n=1}^\infty$ be a variable exponent for which there exists $\lim_n p_n=p\in(0,\infty)$. Then, $\ell_p$ is the unique subsymmetric space that embeds into $\ell_{\pp}$. Indeed, if a subsymmetric space $\Sym$ embeds into $\ell_p$, then, by Lemma~\ref{lem:Dilworth}, the unit vector system of $\Sym$ is equivalent to a disjointly finitely supported sequence $\XB=(x_j)_{j=1}^\infty$ in $\ell_{\pp}$. We can assume that $\XB$ is nonnegative and $\rho_{\pp}^{L,a}(x_j)=1$ for all $j\in\NN$. Passing to a subsequence we can assume that there is $(\varepsilon_j)_{j=1}^\infty$ in $(0,\infty)$ such that $\sum_{j=1}^\infty \varepsilon_j>0$ and
\[
\norm{\bpsi_q-\bpsi_p}_\infty \le \varepsilon_j, \quad q\in \supp(x_j).
\]
Since $\rho_{\pp}^a(x_j)=1$ for all $j\in\NN$,
\[
\abs{\bpsi_p-\sum_{n=1}^\infty M_{\pp}^a(n, tx_j(n))}\le \varepsilon_j, \quad 0\le t \le 1, \, j\in\NN.
\]
By Theorem~\ref{thm:MusEmbed} $\ell_{\pp}[\XB] =\ell_p$.
\end{example}

\begin{example}
Let $(q_j)_{j=1}^\infty$ be a sequence in $(0,\infty)$ decreasing to $q\in(0,\infty)$. Let $\sigma \colon \NN^2\to \NN$ be a bijection. Let $\beta\colon \NN\to\NN$ be the second component of the inverse of $\sigma$. Consider the variable exponent
$\pp=(p_n)_{n=1}^\infty$ given by
\[
p_n=q_{\beta(n)}.
\]
The Bourgin-Nakano space $\ell_{\pp}$ contains a symmetric space that is not an $\ell_p$-space. Indeed, if $(a_j)_{j=1}^\infty$ is a sequence in $(0,\infty)$ such that
\[
\sum_{n=1}^\infty a_j^{q_j}=1,
\]
the disjointly supported basic sequence $\XB=(x_k)_{k=1}^\infty$ defined by
\[
x_k=\sum_{j=1}^\infty a_j \ee_{\sigma(k,j)}.
\]
Since, for any $k\in\NN$ and $t\in[0,\infty)$,
\[
\sum_{n=1}^\infty \enpar{t x_k(n)}^{p_n}= F(t):=\sum_{j=1}^\infty a_j^{q_j} t^{q_j},
\]
$\ell_{\pp}[\XB]=\ell_F$. By Lemma~\ref{lem:NoPotencial}, $\ell_F\not=\ell_p$ for all $p\in(0,\infty]$.
\end{example}
\section{Complemented sequences in variable exponent Lebesgue spaces}\label{sec:complemented}\noindent
Let $\XX $ be a maximal K\"othe space over a $\sigma$-finite mesure space $(\Omega,\Sigma,\mu)$. We say that a sequence space $\Sym$ complementably embeds into $\XX$ if there are bounded linear maps $T\colon \Sym\to\XX$ and $P\colon\XX\to\Sym$ such that $P\circ T=\Id_{\Sym}$. If, $\Sym$ complementably embeds into $\XX$, then there are $\XB=(x_n)_{n=1}^\infty$ in $\XX$ and $\XB^*=(x_n^*)_{n=1}^\infty$ in $\XX^*$, called projecting functionals for $\XB$, such that the unit vector system of $\Sym$ is equivalent to $\XB$, $(\XB,\XB^*)$ is a biorthogonal system, and the mapping
\[
f\mapsto \sum_{n=1}^\infty x_n^*(f) x_n
\]
defines a bounded operator from $\XX$ into $\XX$. Besides, if $\Sym$ is absolutely continuous or $\XB$ is disjointly supported, then the converse also holds. If $\XB$ is disjointly supported and $\XB^*$ is a disjointly supported sequence in $\XX'$, we say that $\Sym$ disjointly embeds into $\XX$.

\begin{lemma}\label{lem:disjointcoordinate}
Let $\XX $ be a maximal K\"othe space and $\Sym$ be a sequence space. Suppose that $\XX$ is L-convex, that is, lattice $p$-convex for some $p>0$.
Suppose that there is a disjointly supported sequence $\XB=(x_n)_{n=1}^\infty$ in $\XB$ with projecting functionals $\XB^*$ in $\XX'$ such that $\Sym=\Sym[\XX,\XB]$. Then, $\Sym$ disjointly embeds into $\XX$.
\end{lemma}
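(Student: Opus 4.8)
The plan is to keep the sequence $\XB=(x_n)_{n=1}^\infty$ and disjointify the functionals by cutting them down to the supports of the $x_n$. Put $E_n=\supp(x_n)$, so the $E_n$ are pairwise disjoint, and set $\tilde x_n^*=x_n^*\chi_{E_n}$. Since $\XX'$ is solid, the sequence $\tilde\XB^*=(\tilde x_n^*)_{n=1}^\infty$ lies in $\XX'$ and is disjointly supported, and $\langle\tilde x_n^*,x_m\rangle=\langle x_n^*,x_m\chi_{E_n}\rangle=\delta_{n,m}$ because $x_m$ is supported on $E_m$; hence $(\XB,\tilde\XB^*)$ is a biorthogonal system and $\Sym[\XX,\XB]=\Sym$ is unchanged. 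So the whole statement reduces to showing that $\tilde P\colon f\mapsto\sum_n\langle\tilde x_n^*,f\rangle x_n$ defines a bounded operator on $\XX$.

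To prove this I would extract the ``diagonal'' of the original projection $P\colon f\mapsto\sum_n\langle x_n^*,f\rangle x_n$ by a random-signs argument. Such an argument cannot be carried out on the quasi-Banach space $\XX$ itself, and this is exactly the point where L-convexity (together with maximality) is used: it makes the K\"othe duality recalled in Section~\ref{sec:MO} available, so that $\XX'$ is a Banach lattice and $\XX=\XX''$. First I would record two routine facts. Since $\Sym=\Sym[\XX,\XB]$, the sequence $\XB$ is equivalent to the canonical $1$-unconditional basis of $\Sym_0$, hence has uniformly bounded partial-sum projections; and because $\langle x_n^*,f\rangle=\langle x_n^*,Pf\rangle$, the partial sums $P_N\colon f\mapsto\sum_{n\le N}\langle x_n^*,f\rangle x_n$ of $P$ satisfy $M:=\sup_N\norm{P_N}<\infty$. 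Dualizing, for every $g\in\XX'$ the finite sum $g_N:=\sum_{n\le N}\langle g,x_n\rangle x_n^*$ lies in $\XX'$ with $\norm{g_N}_{\XX'}\le M\norm g_{\XX'}$, since $\langle g_N,f\rangle=\langle g,P_Nf\rangle$ for all $f\in\XX$.

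Then comes the averaging, performed in the Banach lattice $\XX'$. Fix a finite $F\subseteq\NN$, let $N=\max F$, and for $\epsilon\in\{-1,1\}^F$ put $u_\epsilon=\sum_{m\in F}\epsilon_m\chi_{E_m}$. Using that $x_n$ is supported on $E_n$ and that the $E_m$ are disjoint, one checks that $\langle u_\epsilon g,x_n\rangle=\epsilon_n\langle g,x_n\rangle$ for $n\in F$ and $\langle u_\epsilon g,x_n\rangle=0$ for $n\le N$ with $n\notin F$; consequently $u_\epsilon(u_\epsilon g)_N=\sum_{n,m\in F}\epsilon_n\epsilon_m\langle g,x_n\rangle x_n^*\chi_{E_m}$, so that
\[
\sum_{n\in F}\langle g,x_n\rangle\,x_n^*\chi_{E_n}=\mathbb{E}_\epsilon\bigl[u_\epsilon\,(u_\epsilon g)_N\bigr].
\]
As $\abs{u_\epsilon}\le 1$ pointwise and $\XX'$ is a \emph{normed} lattice, each summand has $\XX'$-norm at most $\norm{(u_\epsilon g)_N}_{\XX'}\le M\norm{u_\epsilon g}_{\XX'}\le M\norm g_{\XX'}$, hence $\norm{\sum_{n\in F}\langle g,x_n\rangle x_n^*\chi_{E_n}}_{\XX'}\le M\norm g_{\XX'}$ for every finite $F$. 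Finally, since $\XX=\XX''$, for $f\in\XX$ and each $N$,
\[
\Bigl\|\sum_{n\le N}\langle\tilde x_n^*,f\rangle\,x_n\Bigr\|_{\XX}=\sup_{\norm g_{\XX'}\le1}\Bigl|\Bigl\langle f,\sum_{n\le N}\langle g,x_n\rangle\,x_n^*\chi_{E_n}\Bigr\rangle\Bigr|\le M\,\norm f_{\XX};
\]
the partial sums on the left are disjointly supported, so they converge $\mu$-a.e., and the Fatou-type axiom of the function space $\XX$ then gives $\tilde Pf\in\XX$ with $\norm{\tilde Pf}_{\XX}\le\ct M\norm f_{\XX}$. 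Thus $\tilde P$ is bounded and $(\XB,\tilde\XB^*)$ witnesses that $\Sym$ disjointly embeds into $\XX$.

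The main obstacle is the diagonal-extraction step: it rests on a contraction/averaging inequality that fails in a general quasi-Banach lattice, so the crux of the argument is to transport the estimate to the locally convex lattice $\XX'$ and pull it back through $\XX=\XX''$ — which is precisely what L-convexity (with maximality) makes legitimate. The remaining points — that $\XB$ is basic with uniformly bounded partial sums, that $g_N\in\XX'$ with the stated norm bound, and that the norm in $\XX$ can be computed by duality against $\XX'$ — are immediate consequences of the material in Section~\ref{sec:MO}.
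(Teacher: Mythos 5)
Your reduction (keep $\XB$, cut each $x_n^*$ down to $E_n=\supp(x_n)$, and show the new map $\tilde P$ is bounded) is exactly the paper's first step, and your averaging identity $\sum_{n\in F}\langle g,x_n\rangle x_n^*\chi_{E_n}=\mathbb{E}_\epsilon[u_\epsilon(u_\epsilon g)_N]$ together with the norm estimate in $\XX'$ is correct as far as it goes. The gap is in the last step. What your duality computation actually bounds is $\sup_{\norm{g}_{\XX'}\le 1}\abs{\langle \tilde P_N f,g\rangle}=\norm{\tilde P_N f}_{\XX''}$, and you pass from this to $\norm{\tilde P_N f}_{\XX}$ by asserting that L-convexity plus maximality gives $\XX=\XX''$ with equivalent norms. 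That is false in the generality of the lemma: L-convexity means lattice $p$-convexity for \emph{some} $p>0$, possibly $p<1$, and for a non-locally-convex lattice the K\"othe dual is not norming. Concretely, $\ell_p$ with $0<p<1$ is L-convex and has $\XX'=\ell_\infty$, $\XX''=\ell_1\supsetneq\ell_p$, and the duality pairing against the unit ball of $\XX'$ recovers the $\ell_1$-norm, not the $\ell_p$-quasi-norm. This is not a corner case the lemma could afford to lose: it is invoked (via Lemma~\ref{lem:CEDiscrete}) for Bourgin--Nakano spaces $\ell_{\pp}$ with $\pp^{-}>0$ possibly below $1$, so your argument proves the lemma only for locally convex maximal $\XX$ (where Lorentz--Luxemburg applies), which is strictly weaker than what is needed.

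The paper's proof avoids duality altogether and this is where L-convexity is genuinely used: by Kalton's theorem, a bounded operator $P$ on an L-convex quasi-Banach lattice satisfies the square-function estimate $\norm{(\sum_j\abs{P(f_j)}^2)^{1/2}}\le C\norm{(\sum_j\abs{f_j}^2)^{1/2}}$. Applying this to the family $(f\chi_{A_n})_n$ for a partition $(A_n)_n$ with $\supp(x_n)\subseteq A_n$, and using that the $x_n$ are disjointly supported so that $\abs{\sum_n y_n^*(f)x_n}\le(\sum_n\abs{P(f\chi_{A_n})}^2)^{1/2}$ pointwise while $(\sum_n\abs{f\chi_{A_n}}^2)^{1/2}=\abs{f}$, gives the bound for $\tilde P$ directly in $\XX$. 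If you want to salvage your route, you must replace the bidual step by an argument that works without a norming dual; Kalton's square-function inequality is precisely the substitute for the randomization you tried to perform in $\XX'$.
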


\begin{proof}
Let $P$ be the endomorphism of $\XX$ given by
\[
f \mapsto \sum_{n=1}^\infty x_n^*(f) x_n.
\]
By \cite{Kalton1984b}, there is a constant $C$ such that
\[
\norm{\enpar{\sum_{j\in J} \abs{P(f_j)}^2}^{1/2}}\le C \norm{\sum_{j\in J} \abs{f_j}^2}^{1/2}
\]
for every family $(f_j)_{j\in J}$ in $\XX$. Pick a partition $(A_n)_{n=1}^\infty$ of $\XX$ so that $\supp(x_n)\subseteq A_n$ for all $n\in\NN$. Set $y_n^*=x_n^* \chi_{A_n}$ for all $n\in\NN$. Given $f\in\XX$, applying the above estimate to $\enpar{f\chi_{A_n}}_{n=1}^\infty$ we obtain
\[
\norm{\sum_{n=1}^\infty y_n^*(f) x_n} \le C \norm{f}.
\]
Consequently, $(y_n^*)_{n=1}^\infty$ are projecting functionals for $\XB$.
\end{proof}

\begin{lemma}\label{lem:Anso39}
Let $\pp$ be a convex variable exponent over a $\sigma$-finite measure space $(\Omega,\Sigma,\mu)$. Suppose that $\pp^{-}>0$ and that a subsymmetric sequence space $\Sym$ complementably disjointly embeds into $\LL_{\pp}$. Then, $\Sym=\ell_q$ for some $q\in R(\pp)$.
\end{lemma}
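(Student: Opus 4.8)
The plan is to pass, via the disjointness hypothesis, to an Orlicz-sequence-space model $\Sym=\ell_F$, to extract from the projecting functionals a companion Orlicz sequence space $\ell_G$ sitting inside the conjugate space $L_{\pq}$, to use the complementation to identify $\ell_G$ with the K\"othe dual of $\ell_F$, and then to feed this into Theorem~\ref{thm:DualOR}. Throughout, $\pq$ denotes the conjugate variable exponent of $\pp$, so that $L_{\pp}'=L_{\pq}$; since $\pp$ is convex, both $\pp$ and $\pq$ take values in $[1,\infty]$, whence $R(\pp)$ and $R(\pq)$ are closed, nonempty and bounded away from zero.

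First I would fix a normalized disjoint model. By hypothesis there are a disjointly supported $\XB=(x_n)_{n=1}^\infty$ in $L_{\pp}$, projecting functionals $\XB^*=(x_n^*)_{n=1}^\infty$ in $L_{\pq}$, a biorthogonal system $(\XB,\XB^*)$, and a bounded projection $P=\sum_n x_n^*\otimes x_n$ on $L_{\pp}$, with the unit vector system of $\Sym$ equivalent to $\XB$. Using the construction in the proof of Lemma~\ref{lem:disjointcoordinate}, I may arrange a partition $(A_n)_{n=1}^\infty$ of $\Omega$ with $\supp(x_n)\cup\supp(x_n^*)\subseteq A_n$; after rescaling, $\norm{x_n}_{\pp}=1$ and $\int_{A_n}x_nx_n^*\,d\mu=1$. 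Testing boundedness of $P$ on functions supported in a single $A_n$ gives $\sup_n\norm{x_n^*}_{\pq}<\infty$, while $1=\int x_nx_n^*\,d\mu\lesssim\norm{x_n}_{\pp}\norm{x_n^*}_{\pq}$ (from $\rho^*\lesssim\rho_{\pq}^{L,a}$) gives $\inf_n\norm{x_n^*}_{\pq}>0$, so $\XB^*$ is semi-normalized in $L_{\pq}$. Since $\Sym$ is subsymmetric and Orlicz sequence spaces carry symmetric bases, passing repeatedly to subsequences costs nothing: Proposition~\ref{prop:DSImpliesOrlicz} applied to $\XB$ gives $\Sym=\ell_F$ with $F\in\Ot(R(\pp))$, and applied to $\XB^*$ in $L_{\pq}$ it gives $\Sym[L_{\pq},\XB^*]=\ell_G$ with $G\in\Ot(R(\pq))\subseteq\Ot((0,\infty])$. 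By Lemma~\ref{lem:extension} I may take $F$ (and $G$) to be a genuinely convex Orlicz function.

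The core step is to show $G\approx F^*$ near zero, equivalently $\ell_G=(\ell_F)'=\ell_{F^*}$; this is the only place the complementation is genuinely used. I would compare, on a finitely supported sequence $(b_n)$, three norms of the block $h:=\sum_n b_nx_n^*$. First, $L_{\pq}$ embeds into $(L_{\pp})^*$ with $\norm{\cdot}_{(L_{\pp})^*}\approx\norm{\cdot}_{\pq}$ (because $\rho^*\approx\rho_{\pq}^{L,a}\approx\rho_{\pq}^{L}$), so $\norm{h}_{\pq}\approx\norm{h}_{(L_{\pp})^*}$. Next, writing $Y$ for the closed linear span of $\XB$, so that $P$ is a bounded projection onto $Y$: since $\XB$ is a basic sequence, every $f\in\ker P$ has $x_n^*(f)=0$ for all $n$, hence $h$ annihilates $\ker P$, and for a functional annihilating the kernel of a bounded projection one has $\norm{h}_{(L_{\pp})^*}\le\norm{P}\,\norm{h}_{Y^*}$ (evaluate $h$ at $f=Pf+(f-Pf)$). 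Finally, $\norm{h}_{Y^*}=\sup\{|\sum_n a_nb_n|:\norm{\sum_m a_mx_m}_{\pp}\le 1\}\approx\sup\{|\sum_n a_nb_n|:\norm{(a_m)}_{\ell_F}\le 1\}=\norm{(b_n)}_{(\ell_F)'}$, using that $\XB$ is equivalent to the unit vector system of $\ell_F$. Chaining these comparisons yields $\norm{(b_n)}_{\ell_G}\approx\norm{(b_n)}_{(\ell_F)'}$ for all finitely supported $(b_n)$; by the Fatou property of $\ell_G$ and of $\ell_{F^*}$ this extends to all sequences, so $\ell_G=\ell_{F^*}$ and therefore $F^*\approx G\in\Ot((0,\infty])$.

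The conclusion is then immediate: $F$ is a convex Orlicz function with $F\in\Ot(R(\pp))$ and $F^*\in\Ot((0,\infty])$, and $R(\pp)$ is closed, nonempty and bounded away from zero, so Theorem~\ref{thm:DualOR} supplies $r\in R(\pp)\cap[1,\infty]=R(\pp)$ with $F\in\Ot(r)$, i.e.\ $F\approx\vPsi_r$ near zero; hence $\Sym=\ell_F=\ell_r$, as required. I expect the delicate point to be the third paragraph: converting the existence of a disjoint complemented model into the identity $\ell_G=(\ell_F)'$ means keeping straight the passage between the $L_{\pq}$-norm of a block of projecting functionals, its norm as a functional on $L_{\pp}$, and its norm as a coordinate functional on the copy $Y$ of $\ell_F$ — and it is precisely the boundedness of the projection $P$ that renders the latter two comparable.
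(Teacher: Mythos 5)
Your proposal is correct and follows essentially the same route as the paper: pass to disjointly supported Orlicz models $\ell_F\subseteq L_{\pp}$ and $\ell_G\subseteq L_{\pq}$ via Proposition~\ref{prop:DSImpliesOrlicz}, identify $\ell_G$ with $\ell_{F^*}$, and conclude with Theorem~\ref{thm:DualOR}. The only difference is that you spell out (via the projection-norm comparison on $Y^*$) the duality identification $\ell_G=(\ell_F)'$ that the paper's proof states without detail.
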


\begin{proof}
There are pairwise disjointly supported sequences $\XB=(x_n)_{n=1}^\infty$ in $\LL_{\pp}$ and $\XB'=(x_n')_{n=1}^\infty$ in $\LL_{\pp'}$ such that $\XB$ is equivalent to the unit vector system of $\Sym$ and $\XB'$ is equivalent to the unit vector system of $\Sym'$. By Proposition~\ref{prop:DSImpliesOrlicz}, there are $F\in\Ot(R(\pp))$ and $G\in\Ot(R(\pp'))$ such that $\Sym=\ell_F$ and $\Sym'=\ell_G$. We have $\ell_{F^*}=\ell_G$, whence, by Corollary~\ref{cor:doubling}, $F^*\approx G$ near the origin. Consequently, $F^*\in\Ot(R(\pp'))$. By Theorem~\ref{thm:DualOR}, $F\in\Ot(r)$ for some $r\in R(\pp)$.
\end{proof}

\begin{lemma}\label{lem:DropNLC}
Let $\pp$ be a variable exponent over a nonatomic $\sigma$-finite measure space $(\Omega,\Sigma,\mu)$. Suppose that $\pp^{-}>0$. Let $\Sym$ be a subsymmetric sequence space. Set $\pp_c=\pp|_{\Omega^{\pp}_c}$. Then, $\Sym$ complementably embeds into $L_{\pp}$ if and only if it complementably embeds into $L_{\pp_c}$.
\end{lemma}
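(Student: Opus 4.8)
The plan is to use the decomposition $L_{\pp}\simeq L_{\pp_c}\oplus L_{\pp_0}$ induced by the partition $\Omega=\Omega^{\pp}_c\sqcup(\Omega\setminus\Omega^{\pp}_c)$, where $\pp_0$ is the restriction of $\pp$ to $\Omega\setminus\Omega^{\pp}_c=\pp^{-1}((0,1))$. One direction is immediate: since $L_{\pp_c}$ is isomorphic to a complemented subspace of $L_{\pp}$, any sequence space that complementably embeds into $L_{\pp_c}$ also complementably embeds into $L_{\pp}$. The content is the converse. So suppose $\Sym$ complementably embeds into $L_{\pp}=L_{\pp_c}\oplus L_{\pp_0}$, witnessed by $T\colon\Sym\to L_{\pp}$ and $P\colon L_{\pp}\to\Sym$ with $P\circ T=\Id_\Sym$.

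The key observation is that the factor $L_{\pp_0}$ is, up to isomorphism, harmless: by Proposition~\ref{prop:nulldual} its dual space is null, hence it supports no nonzero bounded linear functional and no nonzero bounded operator into any Banach space. First I would argue that $\Sym$, being a subsymmetric sequence space with $\pp^->0$, is a Banach space (it is locally convex — being a subsymmetric K\"othe sequence space it is $1$-convex up to renorming, or at worst one reduces to the Banach envelope; in any case the relevant targets here are Banach). Write $T=(T_c,T_0)$ and $P=P_c\circ\pi_c+P_0\circ\pi_0$ according to the direct sum, where $\pi_c,\pi_0$ are the coordinate projections, $T_c\colon\Sym\to L_{\pp_c}$, $T_0\colon\Sym\to L_{\pp_0}$, $P_c\colon L_{\pp_c}\to\Sym$, $P_0\colon L_{\pp_0}\to\Sym$. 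Then $P_0\colon L_{\pp_0}\to\Sym$ is a bounded operator from a space with trivial dual into a Banach space, hence $P_0=0$. Therefore
\[
\Id_\Sym=P\circ T=P_c\circ T_c,
\]
so $(T_c,P_c)$ already witnesses that $\Sym$ complementably embeds into $L_{\pp_c}$.

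The main obstacle I anticipate is the book-keeping around local convexity and the precise meaning of ``complementably embeds'' when $\pp^-<1$, since then $L_{\pp}$ need not be locally convex and the subsymmetric space $\Sym$ might also be nonlocally convex. If $\Sym$ fails to be locally convex, the argument ``bounded operator into a space with null dual vanishes'' does not apply verbatim; instead I would use that $L_{\pp_0}=\LL_{\pp_0}$ has \emph{no nonzero bounded operators into any $F$-space with separating dual}, but $\Sym$ need not have separating dual either. The cleanest fix is to invoke that $\pp^->0$ forces $L_{\pp_0}$ to embed continuously a copy of $L_{\pp^-}$ on each finite piece and that, more to the point, $L_{\pp_0}$ is a \emph{purely non-locally-convex} space on which every bounded operator into $\Sym$ factors through the (trivial) Banach envelope restricted to the relevant subspaces; concretely, one checks that the restriction of $P_0$ to the dense subspace $\St(\mu|_{\pp^{-1}((0,1))})$ is zero by the finite-dimensional argument of Proposition~\ref{prop:nulldual} (on each finite-measure piece with $\pp\le p<1$ one has $L_p\subseteq L_{\pp_0}$ continuously and $L_p$ carries no nonzero operator into a space admitting a subsymmetric basis, as such a basis spans a space with nontrivial dual — e.g.\ the coordinate functionals separate points of the span of any finite block), and then $P_0=0$ by density. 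With $P_0=0$ in hand the identity $\Id_\Sym=P_c\circ T_c$ closes the proof exactly as above.
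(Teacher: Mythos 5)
Your proof is correct and follows essentially the same route as the paper: split $L_{\pp}\simeq L_{\pp_c}\oplus L_{\pp_0}$ and use Proposition~\ref{prop:nulldual} to conclude that the projection $P$ annihilates the summand $L_{\pp_0}$, so that $\Id_{\Sym}=P\circ J\circ Q\circ T$. Your extra care about the possibly non-locally-convex $\Sym$ is resolved exactly as you indicate (a sequence space embeds continuously into $\FF^{\NN}$, so its coordinate functionals separate points, and any bounded operator from a space with null dual into it vanishes); the paper leaves this step implicit.
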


\begin{proof}
Let $T\colon\Sym\to L_{\pp}$ and $P\colon L_{\pp}\to \Sym$ be such that $P\circ T=\Id_{\Sym}$. Let $J$ be the canonical embedding of $L_{\pp_c}$ into $\XX$, and $Q$ be the canonical projection from $\XX$ onto $L_{\pp_c}$. By Proposition~\ref{prop:nulldual}, $P=P\circ J\circ Q$. Consequently, $\Sym$ complementably embeds into $\YY$.
\end{proof}

\begin{theorem}
Let $\pp$ be a variable exponent over a nonatomic $\sigma$-finite measure space $(\Omega,\Sigma,\mu)$. Suppose that $\pp^{-}>0$ and $q:=\pp^{+}<\infty$ and that $\pp^{-1}((1,\infty))$ is not null. Let $\Sym$ be a subsymmetric sequence space. Set $\pp_c=\pp|_{\Omega^{\pp}_c}$. The following are equivalent.
\begin{itemize}
\item $\Sym$ complementably embeds into $L_{\pp}$.
\item $\Sym=\ell_2$ or $\Sym$ complementably disjointly embeds into $L_{\pp}$.
\item $\Sym=\ell_r$ for some $r\in \{2\} \cup R(\pp_c)$.
\end{itemize}
\end{theorem}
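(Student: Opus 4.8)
The plan is to establish the cycle of implications $(1)\Rightarrow(3)\Rightarrow(2)\Rightarrow(1)$. Throughout, Lemma~\ref{lem:DropNLC} lets us replace $L_{\pp}$ by $L_{\pp_c}$; since $\pp^{+}<\infty$ forces $\Omega^{\pp}_\infty$ to be null, this means we may and do assume that $\pp$ is convex, $1\le\pp\le q<\infty$, so that $L_{\pp}$ is absolutely continuous with $(L_{\pp})^{*}=(L_{\pp})'=L_{\pp'}$, and the hypothesis becomes that $\pp^{-1}((1,\infty))$ is not null. We also normalize $\mu$ to be a probability measure. Note that then $R(\pp)=R(\pp_c)$, so it suffices to reach $\Sym=\ell_r$ with $r\in\{2\}\cup R(\pp)$.

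For the two easy implications: $(3)\Rightarrow(2)$ is clear if $\Sym=\ell_2$; if $\Sym=\ell_r$ with $r\in R(\pp)$ then $r\ge 1$ and, by Lemma~\ref{lem:compact}, $\vPsi_r|_{[0,1]}=\bpsi_r\in\Kt(R(\pp))$, so feeding $F=\vPsi_r$ into the construction of Corollary~\ref{cor:subsymdisjoint} yields a disjointly supported $\XB=(x_n)_{n=1}^\infty$ with $\Sym[L_{\pp},\XB]=\ell_r$ and $x_n$ supported on a set $\Omega_n$ on which $\pp$ is as close to $r$ as one pleases; choosing $x_n^{*}$ proportional to $x_n^{\pp-1}\chi_{\Omega_n}\in L_{\pp'}$ and estimating $\|\sum_n x_n^{*}(f)x_n\|_{\pp}$ by a H\"older-type inequality on each $\Omega_n$ (using $\pp\approx r$) together with the disjointness of the $\Omega_n$ shows $f\mapsto\sum_n x_n^{*}(f)x_n$ is bounded, i.e.\ $\Sym$ complementably disjointly embeds into $L_{\pp}$. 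For $(2)\Rightarrow(1)$: a complemented disjoint embedding is a fortiori a complemented embedding, so we only need that $\ell_2$ complementably embeds into $L_{\pp}$. Using that $\pp^{-1}((1,\infty))$ is not null, pick $p_1>1$ with $\pp^{-1}([p_1,\infty])$ not null and a nonatomic finite-measure $A_0$ inside it, so $1<p_1\le\pp\le q<\infty$ on $A_0$; then $L_{\pp}(A_0)$ is, up to equivalent norms, the Calder\'on--Lozanovski\u{\i} space interpolating between $L_{p_1}(A_0)$ and $L_q(A_0)$, hence an interpolation space for that couple. Realizing i.i.d.\ standard Gaussians $(g_n)_{n=1}^\infty$ on $A_0$, Khintchine's inequalities in $L_{p_1}$ and $L_q$ give that $(g_n)$ spans a copy of $\ell_2$ in $L_{\pp}(A_0)$, and the projection onto the first Wiener chaos---bounded on $L_{p_1}(A_0)$ and $L_q(A_0)$ by Pe{\l}czy\'nski's theorem---is bounded on $L_{\pp}(A_0)$ by interpolation; composing with the canonical norm-one projection $L_{\pp}\to L_{\pp}(A_0)$ finishes the implication.

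For $(1)\Rightarrow(3)$---the substantive direction---let $\XB=(x_n)_{n=1}^\infty$ be semi-normalized and equivalent to the unit vector basis of $\Sym$, with projecting functionals $\XB^{*}=(x_n^{*})$ in $(L_{\pp})^{*}=L_{\pp'}$. If $\Sym=\ell_2$ we are done. If $\Sym=\ell_1$ then $1\in R(\pp_c)$: otherwise $R(\pp_c)$ is a compact subset of $(1,q]$, $\pp$ is bounded away from $1$, and $L_{\pp}$ is then $p$-convex for some $p>1$ and $q$-concave, hence reflexive---impossible since it contains $\ell_1$; so $\Sym=\ell_1=\ell_r$ with $r=1\in R(\pp_c)$. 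Otherwise $\Sym\notin\{\ell_1,\ell_2\}$ and, being subsymmetric, $(x_n)$ is weakly null. A Kadec--Pe{\l}czy\'nski-type dichotomy in $L_{\pp}$ produces a subsequence $(x_{n_k})$ that is either equivalent to the $\ell_2$-basis---excluded since $\Sym\neq\ell_2$---or equivalent to a disjointly supported semi-normalized sequence $(z_k)$ of $L_{\pp}$; in the latter case, transporting the operator $f\mapsto\sum_k x_{n_k}^{*}(f)x_{n_k}$ through this equivalence yields a bounded projection onto $\overline{\mathrm{span}}(z_k)$ with coordinate functionals in $(L_{\pp})'=L_{\pp'}$, and since $L_{\pp}$ is $L$-convex, Lemma~\ref{lem:disjointcoordinate} upgrades this to a disjointly supported coordinate system, i.e.\ $\Sym$ complementably disjointly embeds into $L_{\pp}$. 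Lemma~\ref{lem:Anso39} then gives $\Sym=\ell_r$ for some $r\in R(\pp)$, closing the cycle.

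The main obstacle is the Kadec--Pe{\l}czy\'nski dichotomy itself in the genuinely variable setting, where $\pp^{-}$ may be below $2$ (indeed below $1$ before the reduction): then the non-disjoint alternative is no longer automatically $\ell_2$---for instance $\ell_r$ with $\pp^{-}<r<2$ embeds in $L_{\pp}$---so complementation must be used to exclude it. Concretely, one applies a subsequence-splitting decomposition $x_{n_k}=y_k+z_k$ with $(z_k)$ pairwise disjointly supported and $(y_k)$ equi-integrable (uniformly absolutely continuous norms), and must show that for a complemented subsymmetric $(x_{n_k})$ either the disjoint part is dominant (so $(x_{n_k})$ is equivalent to a disjoint sequence after all) or the equi-integrable part is, in which case---exploiting that on the regime where the $\pp$-modular does not escape onto sets of small measure $\|\cdot\|_{\pp}$ is comparable to $\|\cdot\|_{1}$, and using the projection---the span is forced to be isomorphic to $\ell_2$. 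Ruling out the genuinely mixed case is where the bulk of the work lies.
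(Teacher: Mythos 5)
Your overall architecture coincides with the paper's: reduce to the convex part via Lemma~\ref{lem:DropNLC}, get $\ell_2$ complemented from Rademachers/Gaussians, build complemented copies of $\ell_r$ for $r\in R(\pp_c)$ from dual pairs of disjointly supported functions (your $x_n^{*}\propto x_n^{\pp-1}\chi_{\Omega_n}$ is exactly the paper's Lemma~\ref{lem:dualB} combined with Lemma~\ref{lem:disjointsets} and Theorem~\ref{thm:MusEmbed}, packaged there as a kernel operator $S$ with $S'\circ T=\Id_{\ell_r}$), and finish the hard direction with Lemma~\ref{lem:Anso39}. Those parts are fine, and your interpolation argument for the $\ell_2$ projection, while heavier than the paper's sandwich $L_q(\Omega_0)\subseteq L_{\pp}\subseteq L_p(\Omega_0)$ with the Rademacher projection factored through the two classical spaces, is acceptable.

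The genuine gap is the step you yourself flag as ``where the bulk of the work lies'': the passage from a complemented subsymmetric sequence $\Sym\neq\ell_2$ to a \emph{complementably disjointly} embedded one. You invoke ``a Kadec--Pe{\l}czy\'nski-type dichotomy in $L_{\pp}$'' and a subsequence-splitting $x_{n_k}=y_k+z_k$, but you do not prove the dichotomy in the variable-exponent setting, and you explicitly leave open the exclusion of the mixed case; as you note, for $\pp^{-}<2$ the equi-integrable alternative is not automatically $\ell_2$, so the classical argument does not apply verbatim. There is also a secondary unproved assertion in your sketch: mere equivalence of $(x_{n_k})$ to a disjoint sequence $(z_k)$ does not by itself transfer the projection to $\overline{\mathrm{span}}(z_k)$ (one needs a small-perturbation or gliding-hump argument to move the projecting functionals as well, which is what Lemma~\ref{lem:disjointcoordinate} is then used to clean up). The paper closes precisely this hole by first splitting $\Omega$ into $\pp^{-1}([1,2))$ and $\pp^{-1}([2,\infty))$ --- so that each piece is either lattice $2$-concave or lattice $2$-convex --- and then invoking the general structural result Theorem~5.2 of \cite{AnsorenaBello2025b}, which states that a complemented subsymmetric sequence other than $\ell_2$ in such a direct sum can be realized as a complemented disjointly supported sequence in one of the summands. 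Without either citing that theorem or supplying the splitting argument it encapsulates, your proof of the implication from complemented embedding to $\Sym=\ell_r$ is incomplete.
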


\begin{proof}
There is $p>1$ such that $\Omega_0:=\pp^{-1}([p,q])$ is not null. We can without loss of generality assume that $\mu$ is finite and $\mu(\Omega_0)=1$.

Let $(r_n)_{n=1}^\infty$ be a Rademacher sequence over $\Omega_0$. Let $S$ be the associated embedding of $\ell_2$ into $L_q(\Omega_0)$, and $Q$ the associated projection from $L_p(\Omega_0)$ onto $\ell_2$. Let $J$ the canonical map from $L_q(\Omega_0)$ into $L_{\pp}$, and $P$ be the canonical map from $L_{\pp}$ into $L_p(\Omega_0)$. The maps $J\circ S$ and $Q\circ P$ witnesses that $\ell_2$ complementably embeds into $L_{\pp}$.

Suppose a subsymmetric space $\Sym$ other than $\ell_2$ complementably embeds into $\XX:=L_{\pp}$. Set $\Omega_1=\pp^{-1}([1,2))$ and $\Omega_2=\pp^{-1}([2,\infty))$ and $\pp_j=\pp|_{\Omega_j}$, $j=1$, $2$. By Lemma~\ref{lem:DropNLC} and \cite{AnsorenaBello2025b}*{Theorem~5.2}, there is $a\in\{1,2\}$ such that $\Omega_a$ is nonnull and $\Sym$ complementably disjointly embeds into $L_{\pp_a}$. By Lemma~\ref{lem:Anso39}, $F\in\Ot(r)$ for some $r\in R(\pp_a)$.

In \cite{HR2012}*{Proposition~4.4} it is proved that $\ell_r$ complementably embeds into $L_{\pp}$ for every $r\in R(\pp_c)$. For the sake of completeness and clarity, we use our approach to variable exponent Lebesgue spaces to reprove this result. Choose $(\varepsilon_n)_{n=1}^\infty$ in $(0,\infty)$ with $\sum_{n=1}^\infty \varepsilon_n<\infty$. By Lemma~\ref{lem:PotCont}, there is a sequence $(V_n)_{n=1}^\infty$ of neighbourhoods of $q$ such that
\[
\norm{\bpsi_p-\bpsi_r}_\infty\le\varepsilon_n, \quad \norm{\bpsi_{p'}-\bpsi_{r'}}_\infty \le\varepsilon_n
\]
for all $n\in\NN$ and $p\in V_{n}$. Use Lemma~\ref{lem:disjointsets} to choose pairwise disjoint measurable sets $(\Omega_n)_{n=1}^\infty$ such that $0<\mu(\Omega_n)<\infty$ and $\pp(\Omega_n) \subseteq V_n\cap[1,\infty]$ for all $n\in\NN$. Use Lemma~\ref{lem:dualB} to pick sequences $(f_n)_{n=1}^\infty$ and $(g_n)_{n=1}^\infty$ of nonnegative measurable functions such that $\supp(f_n)=\supp(g_n)\subseteq\Omega_n$ and
\[
\int_\Omega f_n g_n \, d \mu=\rho_{\pp}^L(f_n)=\rho^L_{\pq}(g_n)=1
\]
for all $n\in\NN$. By Theorem~\ref{thm:MusEmbed}, there are bounded linear maps
\[
T\colon \ell_r \to L_{\pp}, \quad S\colon \ell_{r'} \to L_{\pp'}
\]
given by
\[
(a_n)_{n=1}^\infty \mapsto \sum_{n=1}^\infty a_n \, f_n, \quad (a_n)_{n=1}^\infty \mapsto \sum_{n=1}^\infty a_n \, g_n
\]
respectively. The map $S$ is a kernel operator whose conjugate map $S'\colon L_{\pp} \to \ell_r$ is given by
\[
S'(g)=\int_\Omega g_n(\omega) g(\omega) \, d\mu(\omega).
\]
Since $S'\circ T=\Id_{\ell_r}$, we are done.
\end{proof}

\begin{theorem}
Let $\pp$ be a variable exponent over a nonatomic $\sigma$-finite measure space $(\Omega,\Sigma,\mu)$. Suppose that $\pp^{-}>0$ and $\pp(\Omega)\subseteq(0,1]$. Given a semi-normalized sequence space $\Sym$, the following are equivalent.
\begin{itemize}
\item $\Sym$ complementably embeds into $L_{\pp}$.
\item $\Sym$ complementably disjointly embeds into $L_{\pp}$.
\item $\Sym=\ell_1$ and $\Omega_1:=\pp^{-1}(1)$ is not null.
\end{itemize}
\end{theorem}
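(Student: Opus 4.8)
The plan is to run the cycle $(3)\Rightarrow(2)\Rightarrow(1)\Rightarrow(3)$, labelling the three conditions of the statement in order; the guiding idea is that the local non-convexity of $L_{\pp}$ confines everything relevant to the set $\Omega_1=\pp^{-1}(1)$, where $\pp\equiv 1$, so the problem collapses to the classical structure of $L_1$. For $(3)\Rightarrow(2)$, assume $\Sym=\ell_1$ and $\mu(\Omega_1)>0$. Using that $\mu$ is nonatomic and $\sigma$-finite, pick pairwise disjoint $A_n\subseteq\Omega_1$ with $0<\mu(A_n)<\infty$ and set $x_n=\mu(A_n)^{-1}\chi_{A_n}$, $x_n^*=\chi_{A_n}$. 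Since $\pp\equiv 1$ on $\Omega_1$, the $x_n$ lie in $L_{\pp}$ with $\norm{x_n}_{\pp}=1$, the $x_n^*$ lie in $L_\infty(\Omega_1)\subseteq L_{\pp}'$, the system $((x_n),(x_n^*))$ is biorthogonal and pairwise disjointly supported, and $f\mapsto\sum_n x_n^*(f)x_n$ is the averaging operator supported on $\bigcup_n A_n$, which factors as a restriction to $\bigcup_n A_n$, a conditional expectation onto $\sigma(A_1,A_2,\dots)$, and the inclusion back into $L_{\pp}$, all contractions; hence it is a bounded projection. As $\sum_n a_n x_n$ has disjoint summands supported on $\Omega_1$, its $\pp$-norm is $\sum_n\abs{a_n}$, so $(x_n)$ is isometrically the $\ell_1$-basis and $\Sym$ complementably disjointly embeds into $L_{\pp}$. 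The implication $(2)\Rightarrow(1)$ is immediate, a complemented disjoint embedding being in particular a complemented embedding.

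For $(1)\Rightarrow(3)$, note first that $\Omega^{\pp}_c=\pp^{-1}([1,\infty])=\Omega_1$ and $\pp|_{\Omega_1}\equiv 1$, so $L_{\pp|_{\Omega_1}}=L_1(\Omega_1)$. Arguing as in the proof of Lemma~\ref{lem:DropNLC} — whose reasoning uses only that the coordinate functionals separate $\Sym$ and that, by Proposition~\ref{prop:nulldual}, $L_{\pp}$ has trivial dual on the band $\pp^{-1}((0,1))$, so any projection realizing a complemented embedding kills that band — one obtains that $\Sym$ complementably embeds into $L_1(\Omega_1)$; in particular $\Omega_1$ is not null, since $L_1(\Omega_1)=\{0\}$ admits no complemented copy of a semi-normalized sequence space. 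It then remains to see that a semi-normalized sequence space $\Sym$ complementably embedding into some $L_1(\nu)$ is $\ell_1$. Since $c_0$ does not embed into any $L_1$-space and $\Sym$ is isomorphic to a closed subspace of $L_1(\nu)$, $\Sym$ contains no copy of $c_0$; a sequence space without a copy of $c_0$ has order-continuous norm, so its canonical vectors $(\ee_n)_{n=1}^\infty$ form a basis of $\Sym$, necessarily $1$-unconditional because the coordinate projections are contractive. Transporting this basis along an isomorphic embedding $T$ of $\Sym$ onto a complemented subspace of $L_1(\nu)$ and then reducing, in the standard way, to a $\sigma$-finite (hence separable) sublattice carrying its closed span, the Lindenstrauss--Pe{\l}czy\'{n}ski theorem \cite{LinPel1968} forces $(T\ee_n)_{n=1}^\infty$, and hence $(\ee_n)_{n=1}^\infty$, to be equivalent to the $\ell_1$-basis; that is, $\Sym=\ell_1$.

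The decisive step is $(1)\Rightarrow(3)$, and its genuine content is twofold: one must really exploit that $L_{\pp}$ is locally non-convex, with dual concentrated on $\Omega_1$, so as to force the projecting functionals onto $\Omega_1$ and legitimately reduce to $L_1(\Omega_1)$; and one must then import the rigidity result \cite{LinPel1968}, for which — in contrast with the convex, essentially bounded exponent results above — there is no internal substitute, the answer here being rigid ($\ell_1$ and nothing else). A lesser technical point, easily dealt with, is checking that the canonical vectors of a semi-normalized sequence space with no copy of $c_0$ actually span it, so that $\Sym$ genuinely carries the unconditional basis to which \cite{LinPel1968} is applied.
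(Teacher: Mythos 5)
Your proof is correct and follows essentially the same route as the paper: reduce to $L_1(\Omega_1)$ via the triviality of the dual on $\pp^{-1}((0,1))$ (the content of Lemma~\ref{lem:DropNLC}, whose proof indeed needs no subsymmetry), invoke Lindenstrauss--Pe{\l}czy\'{n}ski for $(1)\Rightarrow(3)$, and use the standard averaging projection onto disjoint normalized indicators in $\Omega_1$ for $(3)\Rightarrow(2)$. You are somewhat more careful than the paper in checking that the unit vectors of $\Sym$ actually form a semi-normalized unconditional basis (via the absence of $c_0$), but this is a filling-in of details rather than a different argument.
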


\begin{proof}
Set $\mu_1=\mu|_{\Omega_1}$ and $\pp_1=\pp|_{\Omega_1}$. If $\Omega_1$ is nonnull, then $L{\pp_1}=L_1(\mu_1)$.

Suppose that $\Sym$ complementably embeds into $L_{\pp}$. By Lemma~\ref{lem:DropNLC}, $\Omega_1$ is nonnull and $\Sym$ complementably embeds into $L_1(\mu_1)$. By \cite{LinPel1968}, $\Sym=\ell_1$.

Conversely, if $\Omega_1$ is not null, it is well-known that $\ell_1$ complementably disjointly embeds into $L_1(\mu_1)$.
\end{proof}

\begin{lemma}\label{lem:CEDiscrete}
Suppose a subsymmetric sequence space $\Sym$ complementably embeds into an absolutely continuous sequence space $\XX$ over $\NN$. Then, $\Sym$ complementably disjointly embeds into $\XX$.
\end{lemma}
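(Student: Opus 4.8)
The plan is to upgrade an arbitrary complemented embedding to a disjoint one in three moves: exhibit a genuine bounded projection whose coordinate functionals lie in $\XX'$; disjointify the vectors by a gliding-hump perturbation, which is available because $\XX$ is absolutely continuous; and disjointify the functionals via Lemma~\ref{lem:disjointcoordinate}. To set things up, fix bounded maps $T\colon\Sym\to\XX$ and $R\colon\XX\to\Sym$ with $R\circ T=\Id_\Sym$, put $x_n=T(\ee_n)$, and let $x_n^*\in\XX^*$ send $f$ to the $n$-th coordinate of $R(f)$. The shift is an isometry of the subsymmetric space $\Sym$, so $\norm{\ee_n}_\Sym$ is constant; hence $(x_n)$ and $(x_n^*)$ are semi-normalized, $(x_n,x_n^*)$ is biorthogonal, $(x_n)$ is equivalent to the unit vector system of $\Sym$, and $P:=\sum_n x_n^*(\cdot)\,x_n=T\circ R$ is a bounded projection of $\XX$ onto $\overline{\operatorname{span}}(x_n)$. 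Since $\XX$ is absolutely continuous, $\XX^*=\XX'$, so $(x_n^*)\subseteq\XX'$; moreover $V\colon\XX\to\Sym$, $V(f)=(x_n^*(f))_n$, coincides with $R$ and is bounded.

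Next I would disjointify the vectors. As $\XX$ embeds continuously into the space of all scalar sequences with the product topology, a subsequence makes $(x_n)$ converge coordinatewise; replacing $x_n$ by $u_n:=x_{2n}-x_{2n-1}$ and $x_n^*$ by $u_n^*:=\tfrac12(x_{2n}^*-x_{2n-1}^*)$ yields a biorthogonal system retaining all the above properties — the projection $\sum_n u_n^*(\cdot)\,u_n$ is still bounded, with norm controlled by $\norm{V}$ and the basis constant, and $(u_n)$ is still equivalent to the unit vector system of $\Sym$ by unconditionality and subsymmetry — and with the extra feature that $u_n\to 0$ coordinatewise. Absolute continuity of $\XX$ (the tail of a fixed vector vanishes in norm) combined with coordinatewise nullity then produces, after one more subsequence, a sequence $(y_n)$ of simple integrable functions with pairwise disjoint (indeed consecutive-block) supports such that $\sum_n\norm{u_n-y_n}_\XX$ is as small as we please.

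Finally I would transfer the projection and disjointify the functionals. Being a small perturbation of $(u_n)$, the sequence $(y_n)$ is equivalent to the unit vector system of $\Sym$, so $\Phi\colon f\mapsto\sum_n u_n^*(f)\,y_n$ is bounded (its values have $\Sym$-norm comparable to $\norm{V(f)}_\Sym\lesssim\norm{f}_\XX$) and maps $\XX$ into $\XX_2:=\overline{\operatorname{span}}(y_n)$; using biorthogonality, $\Phi(u_m)=y_m$ and $\Phi(y_m)=y_m+e_m$ with $e_m\in\XX_2$ and $\sum_m\norm{e_m}_\XX$ small. Hence $\Phi|_{\XX_2}$ is a small perturbation of $\Id_{\XX_2}$, so $(\Phi|_{\XX_2})^{-1}\circ\Phi$ is a bounded projection of $\XX$ onto $\XX_2$. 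Thus the disjointly supported sequence $\XB=(y_n)$ satisfies $\Sym=\Sym[\XX,\XB]$ and has projecting functionals in $\XX^*=\XX'$, so Lemma~\ref{lem:disjointcoordinate} replaces them by disjointly supported functionals in $\XX'$ — which is precisely the assertion that $\Sym$ complementably disjointly embeds into $\XX$.

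The delicate point is the last step: the projection $(\Phi|_{\XX_2})^{-1}\circ\Phi$ does not itself have disjoint coordinate functionals, and truncating them to the blocks carrying the $y_n$'s preserves boundedness only through the square-function estimate for $L$-convex lattices behind Lemma~\ref{lem:disjointcoordinate}; everything else is routine subsequence-and-perturbation bookkeeping, the only real care being to track the quasi-norm constants.
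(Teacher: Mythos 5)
Your proof is correct and follows essentially the same route as the paper: Cantor diagonalization to get coordinatewise convergence, the difference trick to make the sequence coordinatewise null while keeping a bounded projection, a gliding-hump perturbation to pass to disjoint supports, and finally Lemma~\ref{lem:disjointcoordinate} to disjointify the functionals. You spell out the small-perturbation transfer of the projection (which the paper delegates to a citation) and correctly flag that the last step leans on the $L$-convexity hypothesis of Lemma~\ref{lem:disjointcoordinate}, a reliance the paper's own proof shares.
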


\begin{proof}
Let $\XB=(x_n)_{n=1}^\infty$ in $\XX$ with projecting functionals $\XB^*=(x_n^*)_{n=1}^\infty$ be equivalent to the unit vector system of $\Sym$. By the Cantor diagonal technique, passing to a subsequence we can assume that $\XB$ converges pointwise. Since $(x_{2n-1})_{n=1}^\infty$ is equivalent to $(x_{2n})_{n=1}^\infty$,
the mapping
\[
f \mapsto \sum_{n=1}^\infty x_{2n-1}^*(f) x_{2n-1} - \sum_{n=1}^\infty x_{2n-1}^*(f) x_{2n}
\]
defines and endomorphism of $\XX$. Consequently, we can replace $\XB$ with
\[
\enpar{x_{2n-1}-x_{2n}}_{n=1}^\infty
\]
and $\XB^*$ with
\[
\enpar{x_{2n-1}^*}_{n=1}^\infty.
\]
This way, we can assume that $\XB$ converges to zero pointwise. By the gliding-hump technique (see \cite{AnsorenaBello2025b}), passing to a subsequence we can assume that $\XB$ is finitely disjointly supported. We conclude the proof by applying Lemma~\ref{lem:disjointcoordinate}.
\end{proof}

\begin{theorem}
Let $\pp\colon\NN\to(0,\infty)$ be a variable exponent with $\pp^{-}>0$ and $\pp^{+}<\infty$. Let $\Sym$ be a subsymmetric space. The following are equivalent.
\begin{enumerate}[label=(\roman*)]
\item\label{it:discretecomp:c} $\Sym$ complementably embeds into $\ell_{\pp}$.
\item\label{it:discretecomp:a} $\Sym=\ell_r$ for some $r\in A(\pp)$.
\item\label{it:discretecomp:b} There is a subsequence $\pq$ of $\pp$ such that $\Sym=\ell_{\pq}$.
\end{enumerate}
\end{theorem}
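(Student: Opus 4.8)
The plan is to establish the cycle $(i)\Rightarrow(ii)\Rightarrow(iii)\Rightarrow(i)$, the first two implications being where the substance lies. I will use repeatedly that, since $\pp^{-}>0$ and $\pp^{+}<\infty$, the space $\ell_{\pp}$ is absolutely continuous (it is lattice $\pp^{+}$-concave), whence $\ell_{\pp}'=\ell_{\pp}^{*}$, and that it is L-convex (lattice $\pp^{-}$-convex). For $(iii)\Rightarrow(i)$: if $\pq=(p_{n_{k}})_{k=1}^{\infty}$ is a subsequence of $\pp$, extension by zero embeds $\ell_{\pq}$ isometrically into $\ell_{\pp}$, and the coordinate restriction $f\mapsto f\chi_{\{n_{k}\colon k\in\NN\}}$ is a norm-one projection onto its range; hence $\ell_{\pq}$, and with it $\Sym$, complementably embeds into $\ell_{\pp}$.

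For $(i)\Rightarrow(ii)$: since $\Sym$ complementably embeds into $\ell_{\pp}$ it embeds into $\ell_{\pp}$, so the last theorem of Section~\ref{sec:subsymmetric} gives $r\in A(\pp)$ and an Orlicz function $F$ with $\Sym=\ell_{F}$ and $F\in\Ot\enpar{R(\pp)\cap[r,s]}$ for every $s>r$; by Proposition~\ref{prop:OrliczGoesToExtreme} this $r$ is the unique one with that property. By Lemma~\ref{lem:CEDiscrete}, $\Sym$ in fact complementably \emph{disjointly} embeds into $\ell_{\pp}$, and then Lemma~\ref{lem:disjointcoordinate} supplies a finitely disjointly supported semi-normalized $\XB=(x_{n})_{n=1}^{\infty}$ in $\ell_{\pp}$ with disjointly supported projecting functionals $\XB^{*}=(x_{n}^{*})_{n=1}^{\infty}$ in $\ell_{\pp}'$, say $\supp(x_{n}^{*})\subseteq A_{n}\supseteq\supp(x_{n})=:I_{n}$ with the $A_{n}$ pairwise disjoint, and $\Sym=\Sym[\ell_{\pp},\XB]$. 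It remains to prove $F\in\Ot(r)$, for then $\Sym=\ell_{F}=\ell_{r}$. Split $\NN=N_{1}\cup N_{2}$ with $N_{1}=\{j\colon p_{j}\ge1\}$, $N_{2}=\{j\colon p_{j}<1\}$, so $\ell_{\pp}\simeq\ell_{\pp|_{N_{1}}}\oplus\ell_{\pp|_{N_{2}}}$, and apply the dichotomy \cite{AnsorenaBello2025b}*{Theorem~5.2}: $\Sym$ complementably disjointly embeds into $\ell_{\pp|_{N_{1}}}$ or into $\ell_{\pp|_{N_{2}}}$. In the first case $\pp|_{N_{1}}$ is convex, so Lemma~\ref{lem:Anso39} yields $\Sym=\ell_{q}$ for some $q\in R(\pp|_{N_{1}})$; since $\ell_{q}=\ell_{F}$ forces $F\in\Ot(q)$, while $F\in\Ot\enpar{R(\pp)\cap[r,s]}$ for all $s>r$, Corollary~\ref{cor:34} forces $q\in\bigcap_{s>r}[r,s]=\{r\}$, i.e.\@ $F\in\Ot(r)$. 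In the second case I would argue from the projecting functionals: normalizing $\norm{x_{n}}_{\pp}=1$ gives $\rho_{\pp}(\abs{x_{n}})\le1$ and $\abs{x_{n}(j)}\le1$, whence $\norm{x_{n}}_{1}=\sum_{j}\abs{x_{n}(j)}\le\sum_{j}\abs{x_{n}(j)}^{p_{j}}=\rho_{\pp}(\abs{x_{n}})\le1$ (using $t\le t^{p}$ for $0\le t\le1$, $p\le1$), while $1=\langle x_{n}^{*},x_{n}\rangle\le\norm{x_{n}^{*}}_{\infty}\norm{x_{n}}_{1}$ with $\enpar{\norm{x_{n}^{*}}_{\infty}}_{n}$ bounded (boundedness of $f\mapsto\sum_{n}x_{n}^{*}(f)x_{n}$), so $\norm{x_{n}}_{1}\ge c>0$; as long as the exponents $p_{j}$ with $j\in I_{n}$ stay bounded away from $1$, these two bounds force $\sup_{n}\abs{I_{n}}<\infty$, and then (passing to a subsequence, via Lemma~\ref{lem:compact}) the Orlicz functions attached to $\XB$ converge uniformly on $[0,1]$ to a \emph{finite} convex combination $\sum_{i}\lambda_{i}\vPsi_{q_{i}}$ with $q_{i}\in R(\pp|_{N_{2}})$ and the least $q_{i}$ with $\lambda_{i}>0$ attained, so $F\approx\vPsi_{r}$ near the origin and $F\in\Ot(r)$. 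Either way $\Sym=\ell_{r}$ with $r\in A(\pp)$.

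For $(ii)\Rightarrow(iii)$: given $\Sym=\ell_{r}$, $r\in A(\pp)$, if $p_{n}=r$ for infinitely many $n$ take $\pq$ to be the corresponding constant subsequence, so $\ell_{\pq}=\ell_{r}$. Otherwise choose a subsequence of $\pp$ converging to $r$ with all terms $\neq r$, pass to a monotone further subsequence whose $k$-th term $q_{k}$ satisfies $\abs{q_{k}-r}\le1/k$, put $\pq=(q_{k})_{k=1}^{\infty}$, and verify $\ell_{\pq}=\ell_{r}$ via Theorem~\ref{thm:MusEmbed} applied to $(\vPhi_{q_{k}})_{k=1}^{\infty}$ and the constant sequence $(\vPhi_{r})_{k=1}^{\infty}$. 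One of the two inclusions is immediate (with $b=C=1$ and vanishing $\ell_{1}$-datum) from $t^{q_{k}}\le t^{r}$ or $t^{r}\le t^{q_{k}}$ on $[0,1]$; for the other, an elementary optimization shows that, on choosing the multiplicative constant $C$ large enough, the smallest admissible $\ell_{1}$-datum is dominated by $\enpar{\abs{q_{k}-r}\,\lambda^{1/\abs{q_{k}-r}}}_{k=1}^{\infty}$ for a suitable $\lambda\in(0,1)$, which is summable because $\abs{q_{k}-r}\le1/k$. Hence $\ell_{\pq}=\ell_{r}=\Sym$.

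The main obstacle is securing $F\in\Ot(r)$ in $(i)\Rightarrow(ii)$: on the locally convex summand it is a clean consequence of Lemma~\ref{lem:Anso39}, but on the non-locally-convex summand one must turn the $\ell_{1}$-lower bound on the projecting functionals into a uniform bound on the block lengths, and separately dispose of the residual configuration in which the supporting exponents accumulate at $1$ from below (for instance by splitting it off and using that the corresponding Bourgin--Nakano space behaves like $\ell_{1}$ on blocks of bounded length).
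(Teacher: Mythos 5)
Your overall skeleton --- reduce to a complemented \emph{disjoint} embedding via Lemma~\ref{lem:CEDiscrete}, identify $\Sym=\ell_F$ with $F\in\Ot(R(\pp)\cap[r,s])$ for all $s>r$ and $r\in A(\pp)$ via the last theorem of Section~\ref{sec:subsymmetric}, and then pin $F$ down to $\Ot(r)$ --- is the paper's. Your (ii)$\Rightarrow$(iii) (an explicit optimization feeding the $\ell_1$-datum of Theorem~\ref{thm:MusEmbed}; the paper instead gets summability from Lemma~\ref{lem:PotCont}) and your (iii)$\Rightarrow$(i) are correct, and the convex summand of (i)$\Rightarrow$(ii), handled through Lemma~\ref{lem:Anso39}, matches the paper, whose proof consists precisely of citing Lemmas~\ref{lem:CEDiscrete} and~\ref{lem:Anso39}.

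The gap is exactly where you flag it: the summand $N_2=\{j\colon p_j<1\}$. First, the step ``$\norm{x_n}_1\ge c$ and $\rho_{\pp}(\abs{x_n})\le 1$ force $\sup_n\abs{I_n}<\infty$'' is false: take $x_n=c\,\ee_{j_1}+\varepsilon\sum_{i=2}^{N}\ee_{j_i}$ with $\varepsilon$ small; both bounds hold for every $N$. More to the point, the cardinality of $I_n$ is not the relevant invariant. The Orlicz function attached to $x_n$ is $\bpsi_{\mu_n}$ for the measure $\mu_n=\sum_j p_j\abs{x_n(j)}^{p_j}\delta_{p_j}$, and what must be excluded is a weak* limit $\mu$ whose support accumulates at $r$ from above with $\mu(\{r\})=0$ --- precisely the configuration that, by Lemma~\ref{lem:NoPotencial} and the final example of Section~\ref{sec:subsymmetric}, produces non-$\ell_r$ subsymmetric disjoint sequences in $\ell_{\pp}$. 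Your $\ell_1$-lower bound on $\norm{x_n}_1$ only controls the total mass of $\mu_n$, not its distribution, so it cannot rule this out; and the duality route of Lemma~\ref{lem:Anso39} is unavailable here, since $\ell_{\pp|_{N_2}}'=\ell_\infty$ and the K\"othe dual no longer distinguishes $\ell_F$ from $\ell_G$ in the non-locally-convex range. You acknowledge this yourself in your closing paragraph, together with the unresolved case of exponents accumulating at $1$ from below; as written, (i)$\Rightarrow$(ii) is therefore not established on the non-locally-convex summand. Your instinct that this summand needs a dedicated argument is sound --- the paper itself invokes Lemma~\ref{lem:Anso39} without commenting on its convexity hypothesis --- but the argument you propose does not supply it.
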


\begin{proof}
\ref{it:discretecomp:c} implies \ref{it:discretecomp:a} by Lemma~\ref{lem:CEDiscrete} and Lemma~\ref{lem:Anso39}. In turn, \ref{it:discretecomp:a} implies \ref{it:discretecomp:b} by Lemma~\ref{thm:MusEmbed} and Lemma~\ref{lem:PotCont}. Since it is obvious that \ref{it:discretecomp:b} implies \ref{it:discretecomp:c}, we are done.
\end{proof}
\bibliography{BiblioVEL}
\bibliographystyle{plain}
\end{document}